\makeatletter \@addtoreset{equation}{section} \makeatother
\newtheorem{theorem}{Theorem}[section]
\newtheorem{lemma}{Lemma}[section]
\newtheorem{remark}{Remark}[section]
\numberwithin{equation}{section}
\begin{document}

\title{On a nonhomogeneous Kirchhoff type elliptic system  with the singular Trudinger-Moser growth}
\author
{Shengbing Deng\footnote{
E-mail address:\, {\tt shbdeng@swu.edu.cn} (S. Deng),   {\tt xltianswumaths@163.com} (X. Tian)}   \, and Xingliang Tian\\
\footnotesize  School of Mathematics and Statistics, Southwest University,
Chongqing, 400715, P.R. China}
\date{ }
\maketitle

\begin{abstract}
{The aim of this paper is to study the multiplicity of solutions for the following Kirchhoff type elliptic systems
 \begin{eqnarray*}
    \left\{ \arraycolsep=1.5pt
       \begin{array}{ll}
        -m\left(\sum^k_{j=1}\|u_j\|^2\right)\Delta u_i=\frac{f_i(x,u_1,\ldots,u_k)}{|x|^\beta}+\varepsilon h_i(x),\ \ & \mbox{in}\ \ \Omega, \ \ i=1,\ldots,k ,\\[2mm]
        u_1=u_2=\cdots=u_k=0,\ \ & \mbox{on}\ \ \partial\Omega,
        \end{array}
    \right.
    \end{eqnarray*}
where $\Omega$ is a bounded domain in $\mathbb{R}^2$ containing the origin with smooth boundary,  $\beta\in [0,2)$, $m$ is a Kirchhoff type function, $\|u_j\|^2=\int_\Omega|\nabla u_j|^2dx$, $f_i$ behaves like $e^{\beta s^2}$ when $|s|\rightarrow \infty$ for some $\beta>0$, and there is $C^1$ function $F: \Omega\times\mathbb{R}^k\to \mathbb{R}$ such that $\left(\frac{\partial F}{\partial u_1},\ldots,\frac{\partial F}{\partial u_k}\right)=\left(f_1,\ldots,f_k\right)$, $h_i\in \left(\big(H^1_0(\Omega)\big)^*,\|\cdot\|_*\right)$.
We establish sufficient conditions for the multiplicity of solutions of the above system by using
variational methods with  a suitable singular Trudinger-Moser inequality when $\varepsilon>0$ is small.}

\smallskip
\emph{\bf Keywords:} Kirchhoff type elliptic systems; multiple solutions; singular Trundinger-Moser inequality.

\smallskip
\emph{\bf 2020 Mathematics Subject Classification:} 35J50, 35J57.
\end{abstract}

\section{{\bfseries Introduction}}

In last decades, a great attention has been focused on the study of problems involving exponential growth nonlinearities, which is related to the famous Trudinger-Moser inequality. Let $\Omega$ be a bounded domain in $\mathbb{R}^2$, and denote with $H_0^{1}(\Omega)$ the standard first order Sobolev space given by
\[
H_0^{1}(\Omega)=cl\Big\{u\in C^\infty_0(\Omega)\ :\ \int_\Omega|\nabla u|^2{\rm d}x<\infty\Big\},\quad\ \ \|u\| =\left(\int_\Omega|\nabla u|^2{\rm d}x\right)^{\frac{1}{2}}.
\]
This space is a limiting case for the Sobolev embedding theorem, which yields $H_0^{1}(\Omega)\hookrightarrow L^p(\Omega)$ for all $1\leq p<\infty$, but one knows by easy examples that $H_0^{1}(\Omega)\not\subseteq L^\infty(\Omega)$ such as, $u(x)=\log(1-\log|x|)$ in $B_1(0)$. Hence, one is led to look for a function $g(s):\mathbb{R}\to\mathbb{R}^+$ with maximal growth such that
\[
\sup\limits_{u\in H_0^{1}(\Omega),\|u\| \leq 1}\int_\Omega g(u){\rm d}x<\infty.
\]
It was shown by Trudinger \cite{Trudinger} and Moser \cite{m} that the maximal growth is of exponential type. More precisely, named the Trudinger-Moser inequality that
\[
\exp(\alpha u^{2})\in L^1(\Omega),\quad \forall\ u\in H_0^{1}(\Omega),\ \ \forall\ \alpha>0,
\]
and
\begin{align*}
\sup\limits_{u\in H_0^{1}(\Omega),\|u\| \leq 1}\int_\Omega \exp(\alpha u^{2}){\rm d}x< \infty,\quad \mbox{if}\  \alpha\leq 4\pi,
\end{align*}
where $4\pi$ is the sharp constant in the sense that the supremum in the left is $\infty$ if  $\alpha >4\pi$.

    In order to treat the system problems, here we give some definitions. For all $1\leq p<\infty$, we define $L^p(\Omega,\mathbb{R}^k)$ as
    \[
    L^p(\Omega,\mathbb{R}^k):=\underbrace{L^p(\Omega)\times\cdots\times L^p(\Omega)}_{k},
    \]
    where $L^p(\Omega)$ is the standard $L^p$-space, and since
    \begin{equation}\label{eqk}
    \frac{1}{k}\left(\sum^k_{i=1}|u_i|^p\right)\leq\left(\sum^k_{i=1}|u_i|^2\right)^{\frac{p}{2}}\leq 
    k^p\left(\sum^k_{i=1}|u_i|^p\right),
    \end{equation}
    we can know that $L^p(\Omega,\mathbb{R}^k)$ is well defined and for $U\in L^p(\Omega,\mathbb{R}^k)$, we define $\|U\|_p=\left(\int_\Omega |U|^p dx\right)^{1/p}$ where $|U|=(\sum^k_{i=1}|u_i|^2)^{1/2}$. Moreover we denote
    \[
    H^1_0(\Omega,\mathbb{R}^k):=\underbrace{H_0^{1}(\Omega)\times\cdots\times H_0^{1}(\Omega)}_{k},
    \]
    be the Sobolev space modeled in $L^2(\Omega,\mathbb{R}^k)$ with the scalar product
    \begin{equation*}\
    \langle U,V\rangle=\sum^k_{i=1}\int_{\Omega}\nabla u_i\nabla v_idx,
    \end{equation*}
    where $U, V\in L^2(\Omega,\mathbb{R}^k)$, to which corresponds the norm $\|U\|=\langle U,U\rangle^{1/2}=(\sum^k_{i=1}\|u_i\|^2)^{1/2}$, then $H^1_0(\Omega,\mathbb{R}^k)$ is well defined and also is a Hilbert space. For all $1\leq p<\infty$,
    by the inequality (\ref{eqk}) and the Sobolev embedding theorem, we can know that the embedding $H^1_0(\Omega,\mathbb{R}^k)\hookrightarrow L^p(\Omega,\mathbb{R}^k)$ is compact and $H^1_0(\Omega,\mathbb{R}^k)\nsubseteq L^\infty(\Omega,\mathbb{R}^k)$, where $L^\infty(\Omega,\mathbb{R}^k):=L^\infty(\Omega)\times\cdots\times L^\infty(\Omega)$. In Section \ref{sec preliminaries}, we will establish the Trudinger-Moser type inequality for $H^1_0(\Omega,\mathbb{R}^k)$.

    Now, we begin to state our problem. Let $\Omega$ be a bounded domain in $\mathbb{R}^2$ containing the origin with smooth boundary,  we study the multiplicity of solutions for the following Kirchhoff type systems
    \begin{eqnarray}\label{P}
    \left\{ \arraycolsep=1.5pt
       \begin{array}{ll}
        -m\left(\sum^k_{j=1}\|u_j\|^2\right)\Delta u_i=\frac{f_i(x,u_1,\ldots,u_k)}{|x|^\beta}+\varepsilon h_i(x),\ \ & \mbox{in}\ \ \Omega, \ \ i=1,\ldots,k ,\\[2mm]
        u_1=u_2=\cdots=u_k=0,\ \ & \mbox{on}\ \ \partial\Omega,
        \end{array}
    \right.
    \end{eqnarray}
where $\beta\in [0,2)$,  $m$ is a continuous Kirchhoff type function, $h_i\in \big(\big(H^1_0(\Omega)\big)^*,\|\cdot\|_*\big)\backslash\{0\}$ for some $i\in\{1,\ldots,k\}$, $\varepsilon$ is a small positive parameter, and $f_i$ has the maximal growth which allows treating (\ref{P}) variationally in the Sobolev space $H^1_0(\Omega,\mathbb{R}^k)$. We shall consider the variational situation in which
    \begin{equation*}
    (f_1(x,U),\ldots,f_k(x,U))=\nabla F(x,U)
    \end{equation*}
    for some function $F:\Omega \times \mathbb{R}^k \rightarrow \mathbb{R}$ of class $C^1$, where $\nabla F$ stands for the gradient of $F$ in the variables $U=(u_1,\ldots,u_k)\in \mathbb{R}^k$.
    We then rewrite ($\ref{P}$) in the matrix form as
    \begin{equation}\label{Pb1}
    -m\left(\|U\|^2\right)\Delta U=\frac{\nabla F(x,U)}{|x|^\beta}+\varepsilon H(x),
    \end{equation}
    where $\Delta U=(\Delta u_1,\ldots,\Delta u_k)$, $\frac{\nabla F(x,U)}{|x|^\beta}=\Big(\frac{f_1(x,U)}{|x|^\beta},\ldots,\frac{f_k(x,U)}{|x|^\beta}\Big)$ and $H(x)=\big(h_1(x),\ldots,h_k(x)\big)$.

System (\ref{P}) is called nonlocal because of the term $m\left(\sum^k_{j=1}\|u_j\|^2\right)$ which implies that the equation in (\ref{P}) is no longer a pointwise identity. As we will see later the presence of the term $m\left(\sum^k_{j=1}\|u_j\|^2\right)$ provokes some mathematical difficulties which makes the study of such a class of problems particularly interesting. Moreover, System (\ref{P}) with $k=1$ has a physical appeal which is generalization of a model introduced in 1883 by Kirchhoff \cite{k}.
There are many results about the existence and multiplicity of solutions for Kirchhoff problems by many mathematicians, we refer to \cite{acm,chen,cy2,c2,fs,fiscellaValdinoci,hezou,hezou2,nt} and the references therein.
When $k=1$, $\beta=0$ and $\varepsilon=0$, system (\ref{P}) become the following Kirchhoff type problem
\begin{eqnarray}\label{P1}
    \left\{ \arraycolsep=1.5pt
       \begin{array}{ll}
        -m\Big(\int_\Omega|\nabla u|^2dx\Big)\Delta u =f(x,u),\ \ & \mbox{in}\ \ \Omega,\\[2mm]
        u=0,\ \ & \mbox{on}\ \ \partial\Omega,
        \end{array}
    \right.
\end{eqnarray}
where the Kirchhoff function $m : \mathbb{R}_{+} \rightarrow \mathbb{R}_{+}$ satisfies

$(\overline{M}_1)$  there exists $m_0>0$ such that $m(t)\geq m_0$ for all $t\geq0$ and $M(t+s)\geq M(t)+M(s)$ for all $s,t\geq0$, where $M(t)=\int^t_0 m(\tau)d\tau$ is  the primitive of $m$.

$(\overline{M}_2)$  there exist constants $a_1,a_2>0$ and $t_0>0$ such that for some $\sigma\in\mathbb{R}$, $m(t) \leq a_{1}+a_{2} t^{\sigma}, \forall t \geq t_{0}$.

$(\overline{M}_3)$  $\frac{m(t)}{t}$ is nonincreasing for $t>0$.

\noindent Moreover, the nonlinearity $f:\Omega\times\mathbb{R}\to\mathbb{R}$ is continuous and satisfies

$(\overline{F}_1)$  there exist constants $s_0,K_0>0$ such that
$F(x, s) \leq K_{0} f(x, s), \ \forall(x, s) \in \Omega \times\left[s_{0},+\infty\right)$.

$(\overline{F}_2)$  for each $x \in \Omega, \frac{f(x, s)}{s^{3}}$ is increasing for $s>0$.

$(\overline{F}_3)$  there exists $\beta_{0}>\frac{2}{\alpha_{0} d^{2}} m\left(4 \pi / \alpha_{0}\right)$ such that
$\lim _{s \rightarrow+\infty} \frac{s f(x, s)}{\exp \left(\alpha_{0} s^{2}\right)} \geq \beta_{0}$  uniformly in  $x \in \Omega$.\\
Under these assumptions, by using minimax techniques with the Trudinger-Moser inequality, Figueiredo and Severo  \cite{fs} obtained
the existence of ground state solution of (\ref{P1}).
We note that  hypothesis $(\overline{F}_2)$ is necessary to obtain precise information about
the minimax level of the energy functional associated to problem (\ref{P1}),  they show
the existence of the least energy solution.
Recently, Naimen and  Tarsi \cite{nt} studied the existence and multiplicity of solutions for problem (\ref{P1}) with $m(t)=1+\alpha t$ under some weaker  assumptions than those in \cite{fs}.

On the other hand, we mention that the existence of solutions for elliptic equations involving critical exponential nonlinearities and a small nonhomogeneous term was considered by many authors, see \cite{adiyang,am,doms,lamlu4,y2012} and the references therein. In the whole Euclidean space $\mathbb{R}^N$, for $N$-Laplacian problems in \cite{doms},
for $N$-Laplacian problem with the nonlinear term involving critical Hardy exponential growth and the nonhomogeneous term in \cite{adiyang,y2012}. What's more, Lam and Lu \cite{lamlu4} established the existence and multiplicity of nontrivial solutions for the nonuniformly elliptic equations of $N$-Laplacian type. 
Moreover, Manasses de Souza \cite{ds} has studied the existence of solutions for a singular class of elliptic systems involving critical exponential growth in a bounded domain of $\mathbb{R}^2$. To the best of our knowledge, there are no results for (\ref{P}) with Kirchhoff function and exponential growth nonlinearity.

The  main purpose of the present paper is to consider the multiplicity of solutions of system (\ref{P}) and overcome the lack of compactness due to the presence of exponential growth terms as well as the degenerate nature of the Kirchhoff coefficient.


    Let us introduce the precise assumptions under which our problem is studied. For this, we define $M(t)=\int^  t_0 m(\tau)d\tau$, the primitive of $m$ so that $M(0)=0$. The hypotheses on Kirchhoff function $m:\mathbb{R}^+ \rightarrow\mathbb{R}^+$ are the following:

 ($M_1$)
    there exists $m_0>0$ such that $m(t)\geq m_0$ for all $t\geq 0$;

($M_2$)
    $m(t)$ is nondecreasing for $t\geq 0$;

($M_3$)
    there exists \ $\theta>1$ such that $\theta M(t)-m(t)t$ is nondecreasing for $t\geq 0$.

\begin{remark}\label{rem1}\rm
    By $(M_1)$, we can get that $M(t)$ is increasing for $t\geq 0$.
\end{remark}

\begin{remark}\label{rem3}\rm
From $(M_3)$,  we have that
    \begin{equation}\label{1.4}
    \theta \mathcal{M}(t)-M(t)t\geq 0,\ \ \forall t\geq 0.
    \end{equation}
\end{remark}

\begin{remark}\label{rem2}\rm
A typical example of a function $m$ satisfying the conditions $(M_1)-(M_3)$ is given by $m(t)=m_0+at^{\theta-1}$ with $\theta>1, m_0>0$ and $a\geq 0$. Another example is $m(t)=1+\ln(1+t)$.
\end{remark}

\begin{remark}\label{remcmm}\rm
Here, we compare assumptions $(\overline{M}_1)-(\overline{M}_3)$ in \cite{fs} as shown before with our present assumptions $(M_1)-(M_3)$.
From $(\overline{M}_3)$, we can obtain $(M_3)$ with $\theta=2$. Indeed, for any $0<t_1< t_2$,
    \begin{equation*}
    \begin{split}
    2M(t_1)-m(t_1)t_1&=2M(t_2)-2\int^{t_2}_{t_1}m(s)ds-\frac{m(t_1)t^2_1}{t_1} \\
    &\leq 2M(t_2)-\frac{m(t_2)(t^2_2-t^2_1)}{t_2}-\frac{m(t_2)t^2_1}{t_2} \\
    &=2 M(t_2)-m(t_2)t_2,
    \end{split}
    \end{equation*}
    thus $2M(t)-m(t)t$ is nondecreasing for $t\geq 0$.
    From $(M_1)-(M_2)$, we can obtain $(\overline{M}_1)$. Indeed, by $m(t)$ is nondecreasing for $t\geq 0$, we have $\int^{t+s}_t m(\tau)d\tau\geq \int^{s}_0 m(\tau)d\tau$ for all $s,t\geq0$, then it holds that $\int^{t}_0 m(\tau)d\tau+\int^{t+s}_t m(\tau)d\tau\geq \int^{t}_0 m(\tau)d\tau+\int^{s}_0 m(\tau)d\tau$, i.e. $M(t+s)\geq M(t)+M(s)$.
    Then from (\ref{1.4}), we can get $M(t)\geq M(1)t^\theta$ for $t\leq 1$, and $M(t)\leq M(1)t^\theta$ for $t\geq 1$, thus $M(t)\leq C_1t^\theta+C_2$ for some $C_1,C_2>0$.
\end{remark}

    Motivated by pioneer works of Adimurthi \cite{ad}, de Figueiredo et al. \cite{dmr1} and J.M. do \'{O} \cite{do}, we treat the so-called subcritical case and also the critical case. They say that a function $f:(\Omega,\mathbb{R})\to\mathbb{R}$ has subcritical growth on $\Omega\subset \mathbb{R}^2$ if
\[
\lim _{|u| \rightarrow \infty} \frac{|f(x, u)|}{\exp \left(\alpha u^{2}\right)}=0, \text { uniformly on } \Omega,\  \forall \alpha>0,
\]
and $f$ has critical growth on $\Omega$ if there exists $\alpha_0>0$ such that
\[
\lim _{|u| \rightarrow \infty} \frac{|f(x, u)|}{\exp \left(\alpha u^{2}\right)}=0, \text { uniformly on } \Omega,\  \forall \alpha>\alpha_0,
\]
and
\[
\lim _{|u| \rightarrow \infty} \frac{|f(x, u)|}{\exp \left(\alpha u^{2}\right)}=\infty, \text { uniformly on } \Omega,\  \forall \alpha<\alpha_0.
\]

    Throughout this paper, we assume the following hypotheses on the function $f_i:\Omega\times\mathbb{R}^k\rightarrow\mathbb{R}$ and $F$:

 ($F_0$) $f_i$ is continuous and $f_i(x,0,\ldots,0)=0$, $F(x,0,\ldots,0)=0$ uniformly on $x\in \Omega$.

  ($F_1$)
    $\lim \sup_{|U|\rightarrow 0} \frac {2F(x,U)}{|U|^2}<\lambda_1 m_0$ uniformly on $\Omega$, where
    $
    \lambda_1=\inf_{U\in H^1_0(\Omega,\mathbb{R}^k)\setminus\{0\}} \frac {\|U\|^2}{\int_{\Omega}|U|^2/|x|^\beta dx}>0;
    $

 ($F_2$)
    there exist constants $S_0,M_0>0$ such that
     $0<F(x,U)\leq M_0|\nabla F(x,U)|$, \ for all \ $|U|\geq S_0$ uniformly on $\Omega$;

($F_3$)
    there exists \ $\mu>2\theta$ such that
 $0<\mu F(x,U)\leq U\cdot\nabla F(x,U)$, \ for all \ $(x,U)\in \Omega\times\mathbb{R}^k\setminus\{\mathbf{0}\}$;

    We say that $U\in H^1_0(\Omega,\mathbb{R}^k)$ is a weak solution of problem (\ref{P}) it holds
    \begin{equation*}\
    m(\|U\|^2)\int_{\Omega}\nabla U\cdot\nabla \Phi dx=\int_{\Omega}\frac{\Phi\cdot \nabla F(x,U)}{|x|^\beta} dx+\varepsilon\int_{\Omega}\Phi\cdot H dx, \ \ \forall \ \Phi\in H^1_0(\Omega,\mathbb{R}^k)\\[3pt].
    \end{equation*}
    Since $f_i(x,0,\ldots,0)=0$,\ $U\equiv \mathbf{0}$ is the trivial solution of problem (\ref{P}). Thus, our aim is to obtain nontrivial solutions. Now, the main results of this work can state as follows.

    \begin{theorem}\label{thm1.2}
    Assume $f_i$ has subcritical growth at $\infty$,  that is,
    \begin{equation}\label{1.2}
    \lim_{|U|\to \infty}\frac{|f_i(x,U)|}{e^{\alpha |U|^2}}=0,
     \ \ \text{uniformly on }x\in\Omega,\ \ \forall\alpha >0.
    \end{equation}
    Moreover, assume $(M_1)$,\ $(M_3)$ and $(F_1)-(F_3)$, then there exists $\varepsilon_{sc}>0$ such that for each $0<\varepsilon<\varepsilon_{sc}$, problem (\ref{P}) has at least two nontrivial weak solutions. One of them with positive energy, while the other one with negative energy.
    \end{theorem}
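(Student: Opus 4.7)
\medskip
\noindent\textbf{Proof plan for Theorem \ref{thm1.2}.}

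The plan is to apply a two-solution strategy based on Ekeland's variational principle and the Mountain Pass Theorem to the energy functional
\[
I_\varepsilon(U)=\tfrac12 M(\|U\|^2)-\int_\Omega\frac{F(x,U)}{|x|^\beta}\,dx-\varepsilon\int_\Omega U\cdot H\,dx.
\]
By the singular Trudinger--Moser inequality (to be established in the preliminaries) together with the subcritical hypothesis \eqref{1.2}, $I_\varepsilon$ is well defined and of class $C^1$ on $H^1_0(\Omega,\mathbb{R}^k)$, and critical points correspond to weak solutions of \eqref{P}. First I would verify the mountain pass geometry. Using $(F_1)$ one gets, for small $\tau>0$, an estimate $F(x,U)\le \tfrac12(\lambda_1 m_0-\tau)|U|^2+C|U|^q e^{\alpha|U|^2}$ with $q>2$ and any $\alpha>0$; combining this with $(M_1)$, the definition of $\lambda_1$, the singular Trudinger--Moser inequality and H\"older's inequality yields $I_\varepsilon(U)\ge \eta\|U\|^2-C\|U\|^q-\varepsilon\|H\|_*\|U\|$ on a small ball, which produces constants $\rho,\delta>0$ and $\varepsilon_1>0$ such that $I_\varepsilon(U)\ge\delta$ for $\|U\|=\rho$ whenever $0<\varepsilon<\varepsilon_1$. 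From $(F_3)$ one obtains $F(x,U)\ge c_1|U|^\mu-c_2$ with $\mu>2\theta$, and from $(M_3)$ together with \eqref{1.4} one infers $M(t)\le C_1 t^\theta+C_2$; fixing a nonzero $V_0\in C^\infty_0(\Omega,\mathbb{R}^k)$ this gives $I_\varepsilon(tV_0)\to-\infty$ as $t\to\infty$, so a point $e$ with $\|e\|>\rho$ and $I_\varepsilon(e)<0$ exists.

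Next I would produce the negative energy solution. Since $H\ne 0$ in the dual, choose $V\in H^1_0(\Omega,\mathbb{R}^k)$ with $\int_\Omega V\cdot H\,dx>0$; then $I_\varepsilon(tV)<0$ for all sufficiently small $t>0$, hence $c_0:=\inf_{\overline{B_\rho}}I_\varepsilon<0$. By Ekeland's variational principle applied on the complete metric space $\overline{B_\rho}$, there is a minimizing sequence $\{U_n\}$ with $I_\varepsilon(U_n)\to c_0$ and $I_\varepsilon'(U_n)\to 0$ in the dual; since $c_0<0<\delta$, by the geometry above, $\|U_n\|<\rho$ eventually, so no constraint multiplier arises. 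Passing to a weakly convergent subsequence $U_n\rightharpoonup U_1$ in $H^1_0(\Omega,\mathbb{R}^k)$ and using the compact embedding into $L^p(\Omega,\mathbb{R}^k)$, one checks that $\int_\Omega|x|^{-\beta}\nabla F(x,U_n)\cdot\Phi\,dx\to\int_\Omega|x|^{-\beta}\nabla F(x,U_1)\cdot\Phi\,dx$ for each test $\Phi$ (this is where the subcritical growth is crucial; it gives equi-integrability via a Vitali-type argument combined with the singular Trudinger--Moser inequality). Standard arguments then upgrade weak convergence to strong in $H^1_0(\Omega,\mathbb{R}^k)$, producing a nontrivial critical point $U_1$ of $I_\varepsilon$ with $I_\varepsilon(U_1)=c_0<0$.

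For the positive energy solution I would invoke the Mountain Pass Theorem in its Ekeland form, which always yields a Palais--Smale sequence at the minimax level
\[
c_*=\inf_{\gamma\in\Gamma}\max_{t\in[0,1]}I_\varepsilon(\gamma(t))\ge\delta>0,\qquad
\Gamma=\{\gamma\in C([0,1],H^1_0(\Omega,\mathbb{R}^k)):\gamma(0)=0,\ \gamma(1)=e\}.
\]
Boundedness of such a $(PS)_{c_*}$ sequence $\{V_n\}$ follows from the identity
\[
\mu I_\varepsilon(V_n)-\langle I_\varepsilon'(V_n),V_n\rangle
=\tfrac{\mu}{2}M(\|V_n\|^2)-m(\|V_n\|^2)\|V_n\|^2+\int_\Omega\tfrac{V_n\cdot\nabla F-\mu F}{|x|^\beta}\,dx-(\mu-1)\varepsilon\!\int_\Omega V_n\cdot H\,dx,
\]
together with $(F_3)$, $(M_1)$, $(M_3)$ and Remark \ref{rem3}: the choice $\mu>2\theta$ makes the Kirchhoff part coercive of order $\|V_n\|^2$, and absorbing the linear $\varepsilon H$ term gives a bound $\|V_n\|\le C$. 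Because $f_i$ is only subcritical, the same weak-to-strong passage as above (no level restriction is needed) shows $V_n\to U_2$ strongly, so $U_2$ is a critical point with $I_\varepsilon(U_2)=c_*>0$.

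The main obstacle I anticipate is the convergence step for the nonlinear term $\int_\Omega|x|^{-\beta}\nabla F(x,U_n)\cdot\Phi\,dx$: the singular weight $|x|^{-\beta}$ together with exponential growth requires the singular Trudinger--Moser inequality of Section \ref{sec preliminaries} and a careful Vitali argument based on uniform estimates of the form $\sup_n\int_\Omega|x|^{-\beta}e^{\alpha|U_n|^2}\,dx<\infty$; this uniform integrability, which is immediate here because of the subcritical assumption \eqref{1.2} for any $\alpha>0$ combined with the uniform bound on $\|U_n\|$, allows one to bypass the delicate critical-level analysis. Finally, since $I_\varepsilon(U_1)<0<I_\varepsilon(U_2)$, the two solutions are distinct and both nontrivial, completing the proof upon setting $\varepsilon_{sc}:=\varepsilon_1$.
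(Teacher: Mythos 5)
Your proposal matches the paper's proof in essence and structure: well-definedness and $C^1$-regularity of $I_\varepsilon$ from the singular Trudinger--Moser inequality (Lemma~\ref{lemtm2}/\ref{lemtm4}), mountain pass geometry from $(F_1)$, $(F_3)$, $(M_1)$, $(M_3)$ (Lemmas~\ref{lemgc1}--\ref{lemgc3}), boundedness of Palais--Smale sequences via $I_\varepsilon-\frac{1}{\mu}\langle I_\varepsilon',\cdot\rangle$ using $(M_3)$ and $\mu>2\theta$ (Lemma~\ref{lem4.1}), the crucial observation that subcriticality permits choosing $\alpha$ small enough that the exponential term is uniformly integrable without any level restriction (Lemma~\ref{lem5.1}), and the final combination of Ekeland's principle and the Mountain Pass Theorem to produce the two distinct nontrivial critical points. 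This is the same argument as in the paper, so no further comparison is needed.
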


    \begin{theorem}\label{thm1.3}
    Assume $f_i$ has critical growth at $\infty$, that is, if there exists $\alpha_0 >0$ such that
    \begin{equation}\label{1.3}
    \lim_{|u_i|\to\infty}\frac{|f_i(x,U)|}{e^{\alpha |U|^2}}
    =  \begin{cases}
     0,\ \ &\forall\alpha >\alpha_0,\\[3pt]
     +\infty,\ \ &\forall\alpha <\alpha_0,
    \end{cases}
    \end{equation}
    uniformly on $x\in\Omega$ and $u_j$ where $j\in\{1,\ldots,k\}\backslash\{i\}$. Moreover, suppose $(M_1)-(M_3)$, $(F_1)-(F_3)$ hold and

    $(F_4)$ if for some  $i\in \{1,\ldots,k\}$, there exists $\eta_0$ such that
    $$
    \liminf_{|u_i|\rightarrow\infty}\frac {u_i f_i(x,0,\ldots,0,u_i,0,\ldots,0)}{e^{\alpha_0 |u_i|^2}}\geq \eta_0>\frac { (2-\beta)^2m\left(\frac {2\pi(2-\beta)}{\alpha_0}\right)}{\alpha_0 d^{2-\beta} e },
    $$
    uniformly on $\Omega$, 
    where $d$ is the radius of the largest open ball contained in $\Omega$ centered at the origin. 
    Then there exists $\varepsilon_c>0$ such that for each $0<\varepsilon<\varepsilon_c$, problem (\ref{P}) has at least two nontrivial weak solutions. One of them with positive energy, while the other one with negative energy.
    \end{theorem}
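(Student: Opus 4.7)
\medskip

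\noindent\textbf{Proof proposal.} The plan is to run a two-critical-point variational scheme on the energy functional
$$I_\varepsilon(U)=\tfrac12 M(\|U\|^2)-\int_\Omega\frac{F(x,U)}{|x|^\beta}\,dx-\varepsilon\int_\Omega U\cdot H\,dx,$$
whose critical points in $H^1_0(\Omega,\mathbb{R}^k)$ are precisely the weak solutions of (\ref{P}). First I would verify mountain pass geometry: assumption $(F_1)$ together with the critical growth (\ref{1.3}), the singular Trudinger--Moser inequality proved in Section \ref{sec preliminaries} and $(M_1)$ give $I_\varepsilon(U)\geq\delta-C\varepsilon\|H\|_*$ on some sphere $\|U\|=\rho$, while $(F_3)$ and the polynomial bound $M(t)\leq C_1 t^\theta+C_2$ from Remark \ref{remcmm} force $I_\varepsilon(tV_0)\to-\infty$ for a suitable direction $V_0$. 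For $\varepsilon$ small I also get $\inf_{\|U\|\leq\rho}I_\varepsilon<0$, since $\varepsilon\int_\Omega U\cdot H\,dx$ dominates the quadratic and superquadratic terms along $tH^\sharp$ for $t$ tiny.

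\medskip

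For the negative-energy solution I would apply Ekeland's variational principle on the complete metric space $\overline{B_\rho}\subset H^1_0(\Omega,\mathbb{R}^k)$ to produce a bounded Palais--Smale sequence $\{U_n\}$ at level $c_0:=\inf_{\|U\|\leq\rho}I_\varepsilon<0$. Since $\|U_n\|\leq\rho$ is uniformly below the Trudinger--Moser threshold, standard arguments using the singular Trudinger--Moser inequality and the compact embedding $H^1_0(\Omega,\mathbb{R}^k)\hookrightarrow L^p(\Omega,\mathbb{R}^k)$ give strong convergence of the nonlinear term and a critical point $U_0$ with $I_\varepsilon(U_0)=c_0<0$, which is nontrivial since $c_0<0=I_\varepsilon(0)$.

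\medskip

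The second solution comes from the Mountain Pass Theorem (without a priori PS), yielding a $(PS)_{c_\varepsilon}$ sequence at the MP level $c_\varepsilon>0$. Using $(F_3)$ and $(M_3)$ via Remark \ref{rem3} (i.e. $\mu/(2\theta)>1$ dominates), the sequence is bounded; the weak limit $V_0$ is a critical point, and I must rule out $V_0=0$. The main obstacle, and the only place where the precise constant in $(F_4)$ is used, is to prove
$$c_\varepsilon\;<\;c^*:=\tfrac12 M\!\Bigl(\tfrac{2\pi(2-\beta)}{\alpha_0}\Bigr)-\varepsilon\,\eta(\varepsilon),$$
the level below which the singular exponential term is equi-integrable along any bounded PS sequence (by a Lions-type concentration analysis combined with the singular Trudinger--Moser inequality). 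To get this strict inequality I would plug the Moser-type sequence
$$\widetilde M_n(x)=\frac{1}{\sqrt{2\pi}}\begin{cases}(\log n)^{1/2},&|x|\leq d/n,\\[2pt] \dfrac{\log(d/|x|)}{(\log n)^{1/2}},&d/n\leq|x|\leq d,\\[2pt] 0,&|x|\geq d,\end{cases}$$
(rescaled by the factor $\sqrt{(2-\beta)/2}$ appropriate to the singular weight $|x|^{-\beta}$) into one component of $U$ with the others zero and estimate $\sup_{t\geq0}I_\varepsilon(t\widetilde M_n e_i)$. Precisely here $(F_4)$ enters: the lower bound $\eta_0>(2-\beta)^2 m(2\pi(2-\beta)/\alpha_0)/(\alpha_0 d^{2-\beta}e)$ is calibrated so that the contribution of $\int_\Omega F(x,t\widetilde M_n e_i)/|x|^\beta\,dx$ at $t=t_n^{\max}$ beats the Kirchhoff term $\tfrac12 M(t_n^2)$ for large $n$, pushing the sup strictly below $c^*$.

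\medskip

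Taking $\varepsilon_c$ as the minimum of the thresholds produced by the geometry step, Ekeland's step, and the MP-level estimate, I obtain two distinct nontrivial weak solutions $U_0,V_0\in H^1_0(\Omega,\mathbb{R}^k)$ with $I_\varepsilon(U_0)<0<I_\varepsilon(V_0)$. The hardest step is the strict MP estimate $c_\varepsilon<c^*$, because the Kirchhoff term $\tfrac12 M(\|U\|^2)$ grows like $\|U\|^{2\theta}$ and must be offset by the singular exponential contribution along the Moser sequence; this is exactly why the quantitative constant in $(F_4)$ involves $m(2\pi(2-\beta)/\alpha_0)$.
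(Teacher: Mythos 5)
Your overall architecture — Ekeland's principle on $\overline{B_\rho}$ for the negative-energy solution, a mountain-pass $(PS)$ sequence with a strict level estimate below $\tfrac12 M\bigl(\tfrac{2\pi(2-\beta)}{\alpha_0}\bigr)$ for the positive-energy one, and compactness via the refined singular Trudinger--Moser bound when the PS sequence sits below that threshold — agrees with the paper's strategy. But there is a concrete gap in the most delicate step, the strict MP-level estimate.

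You propose plugging in the classical truncated-logarithm Moser function $\widetilde M_n$. That sequence does not give the constant in $(F_4)$: it is calibrated so that $\lim_n\int_0^{\infty}e^{y_n^2(t)-t}\,dt=2$, whereas the hypothesis
$$
\eta_0>\frac{(2-\beta)^2\,m\bigl(\tfrac{2\pi(2-\beta)}{\alpha_0}\bigr)}{\alpha_0\,d^{2-\beta}\,e}
$$
has an $e$ in the denominator precisely because the paper uses the de Figueiredo--do~\'O--Ruf sequence $y_n$ from \cite{ddr}, for which $\lim_n\int_0^\infty e^{y_n^2(t)-t}\,dt=1+e$. In the contradiction argument (the paper's Lemma \ref{nl}) the quantity $(1+e)-1=e$ is exactly what emerges after splitting the integral into the set where the nonlinearity is large and its complement, and passing to the limit; replacing $1+e$ by $2$ yields only $2-1=1$, which would require $\eta_0$ to exceed a bound roughly $e$ times larger than $(F_4)$ allows. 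So with your choice of $\widetilde M_n$ the strict inequality $c_{M,\varepsilon}<\tfrac12 M\bigl(\tfrac{2\pi(2-\beta)}{\alpha_0}\bigr)$ is not reachable under the stated hypothesis; you would end up proving the theorem only under a strictly stronger assumption (the one with $2$ in place of $e$, as in \cite{fs}). To close the gap you must replace $\widetilde M_n$ by the $y_n$ sequence (with $\delta_n=\tfrac{2\log n}{n}$ and $A_n=\tfrac{1}{en^2}+O(n^{-4})$), suitably scaled for the singular weight, and redo the lower bound for $\int_\Omega F(x,t_n\mathscr G_{n,d})/|x|^\beta$ using that sequence's exact integral asymptotics.

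A secondary point: after producing the mountain-pass $(PS)$ sequence you say "the weak limit $V_0$ is a critical point," but the Kirchhoff coefficient $m(\|U_n\|^2)$ does not pass to the limit for free; one must first identify $A=\lim\|U_n\|^2$, show $U$ weakly solves the frozen problem with $m(A)$, then prove $A<\|U\|^2+\tfrac{2\pi(2-\beta)}{\alpha_0}$ via the level estimate and $(M_2)$, and finally deduce $A=\|U\|^2$ by the concentration--compactness lemma (the paper's Lemma \ref{lemtm3}). Your sketch compresses this to a single sentence; it is not wrong, but it is where the monotonicity hypothesis $(M_2)$ is actually used, and your proposal never invokes $(M_2)$ at all.
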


    \begin{remark}\rm
    If $\beta=\varepsilon=0,\ k=1$, for $(F_4)$, in \cite{fs}, the author replaced $e$ with 2, therefore, in order to get this improvement on the growth of the nonlinearity $f_i$ at $\infty$, it is crucial in our argument to use a new sequence in \cite{ddr}.
    \end{remark}


 \begin{remark}\rm
 When $m\equiv 1$, $k=1$, $\beta=\varepsilon=0$, problems with critical growth involving the Laplace operator in bounded domains of $\mathbb{R}^2$ have been investigated in \cite{asy,ay,am,dmr1}, quasilinear elliptic problems with critical growth for $N$-Laplacian in bounded domains of $\mathbb{R}^N$ have been studied in \cite{ad,do}. Moreover, for the problems with critical growth in bounded domains in $\mathbb{R}^2$ and $f$ satisfied (see examples in \cite{ad,dmr1,do}) the asymptotic hypothesis
    \begin{align}\label{f41}
    \liminf_{|u|\rightarrow\infty}\frac {u f(x,u)}{e^{\alpha_0 u^2}}\geq \eta_0'>\frac {2}{\alpha_0 d^2},
    \end{align}
    and for Kirchhoff problem, in \cite{fs}
    \begin{align}\label{f42}
    \liminf_{|u|\rightarrow\infty}\frac {u f(x,u)}{e^{\alpha_0 u^2}}\geq \eta_0''>\frac {2 m\big(\frac {4\pi}{\alpha_0}\big)}{\alpha_0 d^2}.
    \end{align}
    What's more, when $m\equiv 1$, de Souza studied this problem in \cite{ds} and he assumed the hypothesis
    \begin{align}\label{f43}
    \lim\inf_{|U|\rightarrow\infty}\frac {u_i f_i(x,U)}{e^{2^{k-1}\alpha_0 |U|^2}}\geq \eta_0'''>\frac {(2-\beta)^2}{2^{k-1}\alpha_0 d^{2-\beta} e },
    \end{align}
    for some $i\in\{1,2,\ldots,k\}$.
    Motivated by \cite{as} and \cite{ra}, where they proved a version of Trudinger-Moser inequality with singular weight and studied the existence of positive weak solutions for the following semilinear and homogeneous elliptic problem
    \begin{eqnarray*}
    \left\{
       \begin{array}{ll}
        -\Delta u=\frac{f(x,u)}{|x|^\beta},\ \ & \mbox{in}\ \ \Omega, 
        \\[2mm]
        u=0,\ \ & \mbox{on}\ \ \partial\Omega.
        \end{array}
    \right.
    \end{eqnarray*}
    In the present paper, we improve and complement some of the results cited above for singular and nonhomogeneous case and extend the results to systems. Moreover, thanks to de Figueiredo, do \'{O} and Ruf of \cite{ddr} have constructed a proper sequence which makes the hypotheses (\ref{f41}) and (\ref{f42}) can be improved to $(F_4)$ in Theorem \ref{thm1.3}. And using the improvement of the Young's inequality which will be introduced in Lemma \ref{yi}, (\ref{f43}) can be improved better in $(F_4)$.
    \end{remark}

    \begin{remark}\rm
    On the basis of assumption $(F_0)$, if we further assume that for any $u_j\leq 0$ where $j\in \{1,\ldots,k\}$, $f_i(x,u_1,\ldots,u_k)\equiv 0$ for all $i\in \{1,\ldots,k\}$, uniformly in $x\in\Omega$, and $h_i\geq 0$ for all $i\in \{1,\ldots,k\}$, and $(F_4)$ changes to
    $$
    \liminf_{u_1,\ldots,u_k\rightarrow +\infty}\frac {U\cdot\nabla F(x,U)}{e^{\alpha_0 |U|^2}}\geq \eta_0>\frac { (2-\beta)^2m\left(\frac {2\pi(2-\beta)}{\alpha_0}\right)}{\alpha_0 d^{2-\beta} e },
    $$
    where $U=(u_1,\ldots,u_k)$, then by using Maximum principle, we can proof the solutions obtained in Theorem \ref{thm1.3} are entire positive, i.e. each of the component is positive. A typical example is $F(x,U)=|U|^\mu\exp(\alpha_0|U|^2)\prod^{k}_{i=1}{\rm sign} u_i$, where ${\rm sign} t=0$ if $t\leq 0$ and ${\rm sign} t=1$ if $t>0$.
    \end{remark}

    This paper is organized as follows:  Section \ref{sec preliminaries} contains some technical results. In Section \ref{vf}, we present the variational setting in which our problem will be treated. Section \ref{ps} is devoted to show some properties of the Palais-Samle sequences. Finally, we split Section \ref{main} into two subsections for the subcritical and critical cases, and we complete the proofs of our main results. Hereafter, $C,C_0,C_1,C_2...$ will denote positive (possibly different) constants.

\section{{\bfseries Some preliminary results}}\label{sec preliminaries}

    Now, we introduce some famous inequalities as follows, and inspired by those inequalities, we conclude some similar forms of inequalities. In this paper, we shall use the following version of the Trudinger-Moser inequality with a singular weight due to Adimurthi-Sandeep \cite{as}:

    \begin{lemma}\label{lemtm1}
    Let $\Omega$ be a bounded domain in $\mathbb{R}^2$ containing the origin and $u\in H^1_0(\Omega)$. Then for every $\alpha >0$, and $\beta \in [0,2)$,
    \begin{align*}
    \int_{\Omega} \frac{e^{\alpha |u|^2}}{|x|^\beta}<\infty.
    \end{align*}
    Moreover, there exists constant $C(\Omega)$ depending only on $\Omega$ such that
    \begin{align*}
    \sup_{||\nabla u||_2\leq 1}\int_{\Omega} \frac{e^{\alpha |u|^2}}{|x|^\beta}\leq C(\Omega),
    \end{align*}
    if and only if $\frac{\alpha}{4\pi}+\frac{\beta}{2} \leq 1$.
    \end{lemma}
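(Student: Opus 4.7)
My plan is to follow the standard three-step strategy for singular Trudinger--Moser inequalities: Schwarz symmetrization to reduce to the radial case on a ball, Moser's change of variables to obtain a one-dimensional problem, and then Moser's one-dimensional lemma for the upper bound together with a concentrating sequence for sharpness. First I would take the Schwarz rearrangement: let $B_R = \Omega^*$ denote the ball centered at the origin with $|B_R| = |\Omega|$, and let $u^*$ be the symmetric decreasing rearrangement of $|u|$. By the P\'olya--Szeg\H{o} inequality, $\|\nabla u^*\|_2 \leq \|\nabla u\|_2$; since $|x|^{-\beta}$ is symmetric decreasing and $s \mapsto e^{\alpha s^2}$ is increasing on $[0,\infty)$, the Hardy--Littlewood rearrangement inequality gives
\begin{equation*}
\int_{\Omega} \frac{e^{\alpha u^2}}{|x|^\beta} \, dx \leq \int_{B_R} \frac{e^{\alpha (u^*)^2}}{|x|^\beta} \, dx ,
\end{equation*}
reducing the problem to radial, nonincreasing $u \in H^1_0(B_R)$.

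Next I would introduce Moser's substitution $r = R e^{-t/2}$, $\phi(t) = u(r)$, $t \in [0,\infty)$, which produces the identities
\begin{equation*}
\|\nabla u\|_2^2 = 4\pi \int_0^\infty |\phi'(t)|^2 \, dt, \qquad \int_{B_R} \frac{e^{\alpha u^2}}{|x|^\beta} \, dx = \pi R^{2-\beta} \int_0^\infty e^{\alpha \phi(t)^2 - \frac{2-\beta}{2} t} \, dt .
\end{equation*}
A linear rescaling $s = \tfrac{2-\beta}{2}\, t$, $\psi(s) = \sqrt{\alpha}\,\phi(t)$ then converts the constraint $\|\nabla u\|_2 \leq 1$ into $\int_0^\infty |\psi'|^2 \, ds \leq \tfrac{\alpha}{2\pi(2-\beta)}$ and the integrand into $e^{\psi(s)^2 - s}$. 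Exactly when $\tfrac{\alpha}{4\pi} + \tfrac{\beta}{2} \leq 1$ we have $\int_0^\infty |\psi'|^2\,ds \leq 1$, so Moser's classical one-dimensional lemma yields $\int_0^\infty e^{\psi^2 - s}\,ds \leq C$, giving the uniform upper bound. The pointwise finiteness statement for arbitrary $\alpha > 0$ and $u \in H^1_0(\Omega)$ does not require the sharp scaling: pick $q \in (1, 2/\beta)$ so that $|x|^{-\beta} \in L^q(\Omega)$, and apply the classical Trudinger--Moser inequality to $u/\|u\|$ with a chosen exponent to ensure $e^{\alpha u^2} \in L^{q'}(\Omega)$; H\"older then closes the argument.

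For the sharpness direction, when $\tfrac{\alpha}{4\pi} + \tfrac{\beta}{2} > 1$, I would test against the usual Moser concentrating sequence
\begin{equation*}
u_n(x) = \frac{1}{\sqrt{2\pi}} \cdot
\begin{cases}
\sqrt{\log n}, & |x| \leq R/n, \\
\log(R/|x|)/\sqrt{\log n}, & R/n < |x| \leq R, \\
0, & |x| > R,
\end{cases}
\end{equation*}
which satisfies $\|\nabla u_n\|_2 = 1$ after extending by zero outside the largest ball contained in $\Omega$. The contribution of the inner cap $|x| \leq R/n$ alone gives
\begin{equation*}
\int_{|x|\leq R/n} \frac{e^{\alpha u_n^2}}{|x|^\beta}\,dx \;=\; C\, R^{2-\beta}\, n^{\alpha/(2\pi) - (2-\beta)},
\end{equation*}
which diverges precisely when $\tfrac{\alpha}{4\pi} + \tfrac{\beta}{2} > 1$. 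The main technical step is the rescaled invocation of Moser's one-dimensional lemma; this is classical, so in practice the proof reduces to bookkeeping of constants in the change of variables. The only real subtlety is verifying that the weight $|x|^{-\beta}$ is compatible with the P\'olya--Szeg\H{o}/Hardy--Littlewood reduction, which is handled automatically by its own symmetric-decreasing structure around the origin.
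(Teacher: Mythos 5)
The paper does not give a proof of Lemma~\ref{lemtm1}; it is quoted as a known result of Adimurthi--Sandeep \cite{as}, so there is no internal argument to compare against. Your sketch reconstructs the standard proof of that reference and the bookkeeping is correct: Schwarz symmetrization with P\'olya--Szeg\H{o} and Hardy--Littlewood reduces to radial nonincreasing $u$ on a ball $B_R$; Moser's substitution $r=Re^{-t/2}$ gives $\|\nabla u\|_2^2=4\pi\int_0^\infty|\phi'|^2\,dt$ and $\int_{B_R}e^{\alpha u^2}|x|^{-\beta}\,dx=\pi R^{2-\beta}\int_0^\infty e^{\alpha\phi^2-\frac{2-\beta}{2}t}\,dt$; the rescaling $s=\tfrac{2-\beta}{2}t$, $\psi=\sqrt{\alpha}\,\phi$ converts the constraint into $\int_0^\infty|\psi'|^2\,ds\le\tfrac{\alpha}{2\pi(2-\beta)}$, which is $\le 1$ precisely when $\tfrac{\alpha}{4\pi}+\tfrac{\beta}{2}\le1$, and Moser's one-dimensional lemma then yields the uniform bound; and the inner-cap contribution of the Moser sequence is $\sim n^{\alpha/(2\pi)-(2-\beta)}$, which diverges precisely when $\tfrac{\alpha}{4\pi}+\tfrac{\beta}{2}>1$. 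Two small points worth making explicit. First, the symmetrization step actually requires the observation that $(|x|^{-\beta}\chi_\Omega)^*(y)\le|y|^{-\beta}$ on $B_R$, which follows because $|\{x\in\Omega:|x|^{-\beta}>t\}|=|\Omega\cap B_{t^{-1/\beta}}|\le|B_{t^{-1/\beta}}|$; your appeal to ``handled automatically'' papers over exactly this inequality. Second, in the sharpness argument the ball $B_R$ must be centered at the origin (the largest such ball inside $\Omega$), since the Moser function and the singular weight $|x|^{-\beta}$ are both radial about the origin; ``the largest ball contained in $\Omega$'' could be centered elsewhere. Neither affects correctness but both deserve a sentence in a complete write-up.
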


    Then, we give two useful algebraic inequalities that will be used systematically in the rest of the paper as the following:
    \begin{lemma}\label{yi}
    ({\bfseries Improvement of the Young's inequality})
    Let $a_1,\ldots,a_k>0$, $p_1,\ldots,p_k>1$ with $\frac{1}{p_1}+\frac{1}{p_2}+\cdots+\frac{1}{p_k}=1$, then
    \begin{align}\label{yic}
    a_1 a_2 \cdots a_k\leq \frac{a^{p_1}_1}{p_1}+\frac{a^{p_2}_2}{p_2}+\cdots+\frac{a^{p_k}_k}{p_k}.
    \end{align}
    \end{lemma}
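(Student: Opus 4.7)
The plan is to derive this generalized Young's inequality from the concavity of $\log$ on $(0,\infty)$, which gives a one-line proof via Jensen's inequality. Since the weights $1/p_1,\ldots,1/p_k$ are positive and sum to $1$, applying Jensen's inequality to the concave function $\log$ at the points $a_1^{p_1},\ldots,a_k^{p_k}$ yields
\begin{equation*}
\log\!\left(\sum_{i=1}^{k}\frac{a_i^{p_i}}{p_i}\right)\ \geq\ \sum_{i=1}^{k}\frac{1}{p_i}\log\bigl(a_i^{p_i}\bigr)\ =\ \sum_{i=1}^{k}\log(a_i)\ =\ \log(a_1 a_2\cdots a_k).
\end{equation*}
Exponentiating (which preserves the inequality since $\exp$ is monotone increasing) gives exactly (\ref{yic}).

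An alternative route, if one prefers not to invoke Jensen, is induction on $k$. The base case $k=2$ is the classical Young inequality $ab \leq a^p/p + b^q/q$ with $1/p+1/q=1$, which itself follows from the convexity of $\exp$ applied to the identity $ab=\exp(\tfrac{1}{p}\log a^p+\tfrac{1}{q}\log b^q)$. For the inductive step, assume the inequality holds for $k-1$ factors and set $1/q:=1/p_2+\cdots+1/p_k$, so that $1/p_1+1/q=1$. The classical Young inequality applied to $a_1$ and $a_2\cdots a_k$ gives
\begin{equation*}
a_1\,(a_2\cdots a_k)\ \leq\ \frac{a_1^{p_1}}{p_1}+\frac{(a_2\cdots a_k)^{q}}{q}.
\end{equation*}
Since $(1/p_2+\cdots+1/p_k)\cdot q=1$, the exponents $p_2/q,\ldots,p_k/q$ have reciprocals summing to $1$, so the induction hypothesis applied to $a_2^q,\ldots,a_k^q$ bounds $(a_2\cdots a_k)^q$ by $\sum_{i=2}^{k}(q/p_i)\,a_i^{p_i}$, and dividing by $q$ reassembles the right-hand side of (\ref{yic}).

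There is no serious obstacle here; this is a standard generalization of Young's inequality, and the only point requiring any care is the arithmetic in the inductive step, namely verifying that the substituted exponents still form a conjugate $(k-1)$-tuple. Since the Jensen-based derivation sidesteps even this bookkeeping, it is the route I would include in the paper.
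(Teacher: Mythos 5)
Both of your routes are correct, and the first (Jensen) is genuinely different from the paper's argument. The paper proves Lemma \ref{yi} by induction on $k$: base case $k=2$ is classical Young, and for the step from $s-1$ to $s$ it sets $1/q=\sum_{i=1}^{s-1}1/p_i$, applies Young to the pair $\bigl(\prod_{i=1}^{s-1}a_i,\ a_s\bigr)$, and then feeds the induction hypothesis (with conjugate tuple $p_i/q$) into the first term --- precisely the bookkeeping you describe in your alternative, just with the single factor peeled off at the other end. Your primary route instead applies Jensen's inequality to the concave function $\log$ at the points $a_1^{p_1},\ldots,a_k^{p_k}$ with weights $1/p_1,\ldots,1/p_k$; this is the standard weighted AM--GM derivation and is both shorter and conceptually cleaner, since it makes the convexity input explicit rather than burying it inside the $k=2$ case and an inductive unwind. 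What the paper's induction buys is self-containedness at a more elementary level: it only ever uses two-variable Young, so no appeal to Jensen is needed. Both are complete and correct; the Jensen version is arguably preferable for a reader who already has Jensen, while the paper's induction is preferable if one wants the lemma to rest only on the two-variable inequality already stated.
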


    \begin{proof}
    We will use the mathematical induction to proof this. When $k=2$, by Young  inequality, we can know this conclusion is correct. Suppose that when $k=s-1$ the conclusion is correct, we are going to show that when  $k=s$ the conclusion is still correct. Let
    \begin{align*}
    \frac{1}{q}=\sum^{s-1}_{i=1}\frac{1}{p_i},\ \ \frac{1}{q}+\frac{1}{p_s}=1,\ \ \mbox{then}\ \ \sum^{s-1}_{i=1}\frac{1}{p_i/q}=1.
    \end{align*}
    Thus,
    \begin{align*}
    \prod^s_{i=1} a_i=\Big(\prod^{s-1}_{i=1}a_i\Big)a_s\leq \frac{1}{q}\Big(\prod^{s-1}_{i=1}a_i\Big)^q+\frac{1}{p_s}a^{p_s}_s,
    \end{align*}
    by the mathematical induction, we can get that
    \begin{align*}
    \frac{1}{q}\Big(\prod^{s-1}_{i=1}a_i\Big)^q=\frac{1}{q}\Big(\prod^{s-1}_{i=1}a^{q}_i\Big)\leq \frac{1}{q}\sum^{s-1}\Big[\frac{1}{p_i/q}\big(a^q_i)^{p_i/q}\Big]=\sum^{s-1}\frac{1}{p_i}a^{p_i}_i.
    \end{align*}
    Therefore
    \begin{align*}
    \prod^s_{i=1} a_i\leq \sum^{s-1}_{i=1}\frac{1}{p_i}a^{p_i}_i+\frac{1}{p_s}a^{p_s}_s=\sum^{s}_{i=1}\frac{1}{p_i}a^{p_i}_i.
    \end{align*}
    This lemma is proved. If we take $p_1=p_2=\cdots=p_k=k$, we can get that
    \begin{align}\label{yiy}
    a_1 a_2 \cdots a_k\leq \frac{1}{k}\sum^{k}_{i=1}a^{k}_i\leq \sum^{k}_{i=1}a^{k}_i.
    \end{align}
    \end{proof}

    \begin{lemma}\label{yib}
    Suppose $a_1, a_2, \ldots,a_k\geq0$ with $a_1+a_2+\cdots+a_k<1$, then there exist $p_1,\ldots,p_k>1$ satisfying $\frac{1}{p_1}+\frac{1}{p_2}+\cdots+\frac{1}{p_k}=1$, such that
    \begin{align}\label{yibc}
    p_ia_i<1, \quad \mbox{for all}\ \ i=1,2,\ldots,k.
    \end{align}
    Moreover, if $a_1, a_2, \ldots,a_k\geq0$ satisfying $a_1+a_2+\cdots+a_k=1$, then we can take $p_i=\frac{1}{a_i}$ such that $\frac{1}{p_1}+\frac{1}{p_2}+\cdots+\frac{1}{p_k}=1$ and
    \begin{align}\label{yibcd}
    p_ia_i=1, \quad \mbox{for all}\ \ i=1,2,\ldots,k.
    \end{align}
    \end{lemma}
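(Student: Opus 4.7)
The plan is to construct the exponents $p_i$ explicitly by absorbing the slack $1-\sum a_i$ uniformly across the indices. Concretely, assuming $k\geq 2$, let $s=\sum_{j=1}^k a_j<1$ and set $\varepsilon=(1-s)/k>0$. I would define $b_i=a_i+\varepsilon$ and $p_i=1/b_i$, and then check the three required properties in turn.

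First, $\sum_{i=1}^k 1/p_i=\sum_{i=1}^k b_i=s+k\varepsilon=1$, which gives the conjugacy relation. Second, since $b_i=a_i+\varepsilon>a_i\geq 0$, we immediately get $p_i a_i=a_i/(a_i+\varepsilon)<1$, which is the desired strict inequality (\ref{yibc}). Third, I need $p_i>1$, i.e.\ $b_i<1$; equivalently $a_i<1-\varepsilon=(k-1+s)/k$. Since $a_i\leq s$ and $(k-1)s<k-1$ (because $s<1$ and $k\geq 2$), we have $ks<k-1+s$, so $a_i\leq s<1-\varepsilon$, as required. The case $k=1$ is degenerate: the only solution of $1/p_1=1$ is $p_1=1$, but then $p_1 a_1=a_1<1$ is still automatic, so the claim reduces to the hypothesis and no construction is needed.

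For the second assertion, when $\sum a_i=1$ and each $a_i>0$ (the only case in which the formula $p_i=1/a_i$ is meaningful), the verification is immediate: $\sum 1/p_i=\sum a_i=1$, $p_i a_i=1$, and $p_i=1/a_i\geq 1$ with equality only if $a_i=1$ (which forces $k=1$). There is no real obstacle in this proof; the only thing to be a little careful about is the strict inequality $p_i>1$, which is why the averaged shift $\varepsilon=(1-s)/k$ is preferable to any non-uniform choice.
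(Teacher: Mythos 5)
Your proof is correct, and it takes a genuinely different and cleaner route than the paper's. The paper proves the first assertion by induction on $k$: it handles $k=2$ by a case split on whether one of the $a_i$ vanishes, and for the inductive step it groups $a_1+\cdots+a_{s-1}$ into a single quantity, applies the $k=2$ case to obtain a pair $q_1,q_2$, then applies the inductive hypothesis to the rescaled values $q_1 a_i$ and multiplies conjugate exponents. Your construction instead distributes the slack $1-s$ uniformly, setting $b_i=a_i+\varepsilon$ with $\varepsilon=(1-s)/k$ and $p_i=1/b_i$; all three properties (conjugacy, $p_i a_i<1$, and $p_i>1$) then fall out by one-line computations, with the only nontrivial check being $s<1-\varepsilon$, which you correctly reduce to $(k-1)s<k-1$. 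This is shorter, avoids case splits entirely, and has the advantage that the exponents are given by an explicit closed formula rather than built recursively. You also correctly flag the two degenerate aspects that the paper glosses over — the $k=1$ case, where $\sum 1/p_i=1$ forces $p_1=1$ and the statement as written is vacuous, and the fact that $p_i=1/a_i$ is only meaningful in the boundary case when every $a_i>0$ — matching the paper's own (somewhat terse) remark that if some $a_i=0$ one reduces to the $(k-1)$-variable case.
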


    \begin{proof}
    For the case $a_1, a_2, \ldots,a_k\geq0$ with $a_1+a_2+\cdots+a_k<1$. We also make use of the mathematical induction as previous. When $k=2$, $a_1, a_2\geq0$ with $a_1+a_2<1$. If $a_2=0$ (or $a_1=0$), then taking $p_1=\left(\frac{1}{2}+\frac{1}{2a_1}\right)>1$ (or $p_2=\left(\frac{1}{2}+\frac{1}{2a_2}\right)>1$), we can obtain $p_1a_1<1,\ p_2a_2<1$, where $p_2=\frac{p_1}{p_1-1}$. If $a_1, a_2>0$, then taking $p_1=\left(\frac{1}{2(1-a_1)}+\frac{1}{2a_2}\right)>1$, we can obtain $p_1a_1<1,\ p_2a_2<1$, where $p_2=\frac{p_1}{p_1-1}$. Suppose that when $k=s-1$ the conclusion is correct, if $k=s$ the conclusion is still correct, then the lemma follows. Let
    \begin{align*}
    (a_1+a_2+\cdots+a_{s-1})+a_s<1,
    \end{align*}
    then there exist $q_1, q_2>1$ satisfying $\frac{1}{q_1}+\frac{1}{q_2}=1$ such that
    \begin{align*}
    q_1(a_1+a_2+\cdots+a_{s-1})<1, \ \ q_2a_s<1.
    \end{align*}
    Then by the assumption, there exist $q_3, q_4,\ldots, q_{s+1}>1$ satisfying $\frac{1}{q_3}+\frac{1}{q_4}+\cdots+\frac{1}{q_{s+1}}=1$ such that
    \begin{align*}
    q_3(q_1a_1)<1,\ q_4(q_1a_2)<1, \ldots,\ q_{s+1}(q_1a_{s-1})<1,
    \end{align*}
    i.e.
    \begin{align*}
    (q_3q_1)a_1<1,\ (q_4q_1)a_2<1, \ldots,\ (q_{s+1}q_1)a_{s-1}<1.
    \end{align*}
    Taking $p_1=q_1q_3,\ p_2=q_1q_4,\ldots,\ p_{s-1}=q_1q_{s+1},\ p_s=q_2$, then it holds that
    \begin{align*}
    \frac{1}{p_1}+\frac{1}{p_2}+\cdots+\frac{1}{p_s}=\frac{1}{q_1}\left(\frac{1}{q_3}+\frac{1}{q_4}+\cdots+\frac{1}{q_{s+1}}\right)+\frac{1}{q_2}
    =\frac{1}{q_1}+\frac{1}{q_2}=1,
    \end{align*}
    and
    \begin{align*}
    p_1a_1<1,\ p_2a_2<1, \ldots,\ p_sa_s<1.
    \end{align*}
    For the case $a_1, a_2, \ldots,a_k\geq0$ satisfying $a_1+a_2+\cdots+a_k=1$. If $a_i>0$ for all $i\in\{1,\ldots,k\}$, then we can take $p_i=\frac{1}{a_i}$ such that $\frac{1}{p_1}+\frac{1}{p_2}+\cdots+\frac{1}{p_k}=1$ and (\ref{yibc}) holds. If $a_i=0$ for some $i\in\{1,\ldots,k\}$ which same as the case $(k-1)$.
    The proof is complete.
    \end{proof}

    By the above inequalities, we begin to establish the singular Trudinger-Moser type inequalities in $H^1_0(\Omega,\mathbb{R}^k)$:
    \begin{lemma}\label{lemtm2}
    ({\bfseries Improvement of the Trudinger-Moser inequality})
    Let $\Omega$ be a bounded domain in $\mathbb{R}^2$ containing the origin and $U=(u_1,\ldots,u_k)\in H^1_0(\Omega,\mathbb{R}^k)$. Then for every $\alpha >0$, and $\beta\in [0,2)$,
    \begin{align}\label{tmit}
    \int_{\Omega} \frac{e^{\alpha |U|^2}}{|x|^\beta}<\infty.
    \end{align}
    Moreover, it holds that
    \begin{align}\label{tmiht}
    \sup_{U\in H^1_0(\Omega,\mathbb{R}^k),\ \|U\|\leq 1}\int_{\Omega} \frac{e^{\alpha |U|^2}}{|x|^\beta}\leq C(\Omega).
    \end{align}
    if and only if $\frac{\alpha}{4\pi}+\frac{\beta}{2} \leq 1$, where $C(\Omega)$ is given in Lemma \ref{lemtm1}.
    \end{lemma}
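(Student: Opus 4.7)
The plan is to leverage the three ingredients established just above: the scalar singular Trudinger--Moser inequality (Lemma \ref{lemtm1}), the improved Young inequality (Lemma \ref{yi}), and the selection of exponents from Lemma \ref{yib}. The key observation is that $|U|^2=\sum_{i=1}^{k}|u_i|^2$, so
\[
e^{\alpha|U|^{2}}=\prod_{i=1}^{k}e^{\alpha|u_i|^{2}},
\]
which turns a multi-component exponential into a product of scalar exponentials amenable to Young-type splitting.

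For \eqref{tmit}, I will apply the special case \eqref{yiy} of the improved Young inequality with $p_1=\cdots=p_k=k$ to get
\[
e^{\alpha|U|^{2}}=\prod_{i=1}^{k}e^{\alpha|u_i|^{2}}\le \sum_{i=1}^{k}e^{k\alpha|u_i|^{2}},
\]
and then invoke Lemma \ref{lemtm1} (the finiteness part, which holds for every $\alpha$) on each component $u_i\in H_0^1(\Omega)$. Summing the finitely many finite integrals yields \eqref{tmit}.

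For the sufficiency in \eqref{tmiht}, I assume $\frac{\alpha}{4\pi}+\frac{\beta}{2}\le 1$ and $\|U\|\le 1$, and set $a_i:=\|u_i\|^{2}\ge 0$, so that $\sum_{i}a_i\le 1$. I will treat two cases. If $\sum_i a_i=1$ (with components where $a_i=0$ dropped, reducing to a lower $k$), Lemma \ref{yib} allows me to choose $p_i=1/a_i$, so that Lemma \ref{yi} gives
\[
e^{\alpha|U|^{2}}\le \sum_{i=1}^{k}\frac{1}{p_i}e^{p_i\alpha|u_i|^{2}}=\sum_{i=1}^{k}a_i\,e^{\alpha|v_i|^{2}},
\]
where $v_i:=u_i/\sqrt{a_i}$ satisfies $\|v_i\|=1$. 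Then Lemma \ref{lemtm1} applies directly to each $v_i$, giving the bound $\sum_i a_i\,C(\Omega)=C(\Omega)$. If instead $\sum_i a_i<1$, Lemma \ref{yib} provides $p_i>1$ with $\sum 1/p_i=1$ and $p_ia_i<1$ for every $i$; after rescaling to $w_i:=u_i/\|u_i\|$ the exponent becomes $p_i a_i\alpha|w_i|^{2}$, and since
\[
\frac{p_ia_i\,\alpha}{4\pi}+\frac{\beta}{2}<\frac{\alpha}{4\pi}+\frac{\beta}{2}\le 1,
\]
Lemma \ref{lemtm1} again gives each term bounded by $C(\Omega)$, and summing with the weights $1/p_i$ yields $C(\Omega)$. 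The necessity is handled by taking the test vector $U=(u,0,\ldots,0)$, for which $\|U\|=\|u\|$ and $|U|^{2}=|u|^{2}$; the assumed uniform bound then reduces to the scalar sup, whose finiteness forces $\frac{\alpha}{4\pi}+\frac{\beta}{2}\le 1$ by the sharpness half of Lemma \ref{lemtm1}.

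The main obstacle I anticipate is the case analysis in the sufficiency argument: one must be careful when some $a_i$ vanish (Lemma \ref{yib} excludes these) and one must verify that the exponents $p_i$ and the rescaling combine correctly so that each resulting integral falls within the range $\frac{\alpha'}{4\pi}+\frac{\beta}{2}\le 1$ covered by Lemma \ref{lemtm1}. Once this bookkeeping is in place the argument is essentially a combinatorial reduction from the vector setting to the scalar Adimurthi--Sandeep inequality.
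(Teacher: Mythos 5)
Your proof is correct and follows essentially the same route as the paper: split $e^{\alpha|U|^2}=\prod_i e^{\alpha|u_i|^2}$, apply the improved Young inequality with exponents supplied by Lemma~\ref{yib}, rescale each component to a unit-norm scalar function, and invoke Lemma~\ref{lemtm1}; necessity comes from the test vector $(u,0,\ldots,0)$. You are merely more explicit than the paper about the case distinction between $\sum_i\|u_i\|^2=1$ and $<1$ and about components with $\|u_i\|=0$, which the paper's proof handles implicitly by citing Lemma~\ref{yib} without separating cases.
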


    \begin{proof}
    Because $U=(u_1,\ldots,u_k)$, we can get $|U|^2=\sum^k_{i=1}|u_i|^2$. Thus, by using (\ref{yiy}) and Lemma \ref{lemtm1}, we have
    \begin{align*}
    \int_{\Omega} \frac{e^{\alpha |U|^2}}{|x|^\beta}=\int_{\Omega} \frac{e^{\alpha |u_1|^2}\cdots e^{\alpha |u_k|^2}}{|x|^\beta} \leq \sum^k_{i=1}\int_{\Omega} \frac{e^{k\alpha |u_i|^2}}{|x|^{\beta}}<\infty.
    \end{align*}
    Then we begin to proof (\ref{tmiht}). For each $U\in H^1_0(\Omega,\mathbb{R}^k)$ satisfying $\|U\|\leq 1$, then $\|U\|^2=\sum^k_{i=1}\|u_i\|^2\leq 1$, Lemma \ref{yib} shows that there exist $p_1,\ldots,p_k>1$ satisfying $\frac{1}{p_1}+\frac{1}{p_2}+\cdots+\frac{1}{p_k}=1$ such that
    $p_i\|u_i\|^2\leq 1$ holds, for all $i=1,2,\ldots,k$. If $\frac{\alpha}{4\pi}+\frac{\beta}{2} \leq 1$, then it also holds that $\frac{p_i\|u_i\|^2\alpha}{4\pi}+\frac{\beta}{2} \leq 1$, for all $i=1,2,\ldots,k$. Then by using Lemma \ref{yi}, i.e. the improvement of the Young's inequality, and from Lemma \ref{lemtm1} we have
    \begin{align*}
    \begin{split}
    \int_{\Omega} \frac{e^{\alpha |U|^2}}{|x|^\beta} =\int_{\Omega} \frac{e^{\alpha |u_1|^2}\cdots e^{\alpha |u_k|^2}}{|x|^\beta}  \leq \sum^k_{i=1}\int_{\Omega} \frac{e^{p_i\alpha |u_i|^2}}{p_i|x|^{\beta}} =\sum^k_{i=1}\int_{\Omega} \frac{e^{p_i\alpha\|u_i\|^2 (\frac{u_i}{\|u_i\|})^2}}{p_i|x|^{\beta}}
    \leq  \sum^k_{i=1}\frac{C(\Omega)}{p_i}=C(\Omega),
    \end{split}
    \end{align*}
    then (\ref{tmiht}) follows. 
    If $\frac{\alpha}{4\pi}+\frac{\beta}{2} >1$, we take $U=(u,0,\ldots,0)$, then Lemma \ref{lemtm1} shows that the supremum for the integral in (\ref{tmiht}) is infinite. Thus the proof is complete.
    \end{proof}

    \begin{lemma}\label{lemtm3}
    Let $\{U_n\}$ be a sequence of functions in $H^1_0(\Omega,\mathbb{R}^k)$ with $\|U_n\|=1$ such that $U_n\rightharpoonup U\neq0$ weakly in  $H^1_0(\Omega,\mathbb{R}^k)$. Then for any $0<p<\frac{2\pi(2-\beta)}{{(1-\|U\|^2)}}$ and $\beta\in [0,2)$, we have
    $$
    \sup_n \int_{\Omega}\frac{e^{p|U_n|^2}}{|x|^\beta}<\infty.
    $$
    \end{lemma}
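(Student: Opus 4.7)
\textbf{Proof proposal for Lemma \ref{lemtm3}.}
The plan is a standard Lions-type concentration-compactness argument, adapted to the vectorial singular setting, leveraging the scalar improved inequality of Lemma \ref{lemtm2}. First, I would decompose $U_n = V_n + U$, where $V_n := U_n - U \rightharpoonup 0$ weakly in $H^1_0(\Omega,\mathbb{R}^k)$. Expanding the inner product and using $\langle U_n, U\rangle \to \|U\|^2$, one gets
\begin{equation*}
\|V_n\|^2 = \|U_n\|^2 - 2\langle U_n, U\rangle + \|U\|^2 \to 1 - \|U\|^2
\end{equation*}
as $n\to\infty$. Since $p(1-\|U\|^2) < 2\pi(2-\beta)$ by hypothesis, I can fix $q>1$ close to $1$ and $\varepsilon>0$ small, together with $\delta>0$ small, such that
\begin{equation*}
pq(1+\varepsilon)\bigl(1-\|U\|^2+\delta\bigr) < 2\pi(2-\beta).
\end{equation*}

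Next, for each $n$, I use the elementary inequality $(a+b)^2 \leq (1+\varepsilon)a^2 + C_\varepsilon b^2$ (with $C_\varepsilon = 1+1/\varepsilon$) applied componentwise to get
\begin{equation*}
|U_n|^2 \leq (1+\varepsilon)|V_n|^2 + C_\varepsilon |U|^2,
\end{equation*}
and hence $e^{p|U_n|^2} \leq e^{p(1+\varepsilon)|V_n|^2}\, e^{pC_\varepsilon|U|^2}$. Splitting the singular weight as $|x|^{-\beta} = |x|^{-\beta/q} \cdot |x|^{-\beta/q'}$ with $1/q + 1/q' = 1$ and applying H\"older's inequality yields
\begin{equation*}
\int_{\Omega}\frac{e^{p|U_n|^2}}{|x|^\beta}\,dx \leq \left(\int_{\Omega}\frac{e^{pq(1+\varepsilon)|V_n|^2}}{|x|^\beta}\,dx\right)^{1/q}\left(\int_{\Omega}\frac{e^{pq'C_\varepsilon|U|^2}}{|x|^\beta}\,dx\right)^{1/q'}.
\end{equation*}
The second factor is a finite constant independent of $n$ by the first part of Lemma \ref{lemtm2} (applied to the fixed function $U$).

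For the first factor, for $n$ large enough one has $\|V_n\|^2 \leq 1-\|U\|^2+\delta$. Setting $W_n := V_n/\|V_n\|$, so that $\|W_n\|=1$, write
\begin{equation*}
pq(1+\varepsilon)|V_n|^2 = \bigl[pq(1+\varepsilon)\|V_n\|^2\bigr]\,|W_n|^2,
\end{equation*}
and observe that by our choice of $q, \varepsilon, \delta$, the bracketed constant $\alpha_n := pq(1+\varepsilon)\|V_n\|^2$ satisfies $\alpha_n/(4\pi) + \beta/2 \leq 1$ for all large $n$. Lemma \ref{lemtm2} therefore yields a uniform bound on $\int_\Omega |x|^{-\beta} e^{\alpha_n|W_n|^2}\,dx$, and combining with the H\"older estimate gives the desired supremum bound.

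The principal obstacle is the calibration of $q,\varepsilon,\delta$ so that, after losing a factor $(1+\varepsilon)$ on the $V_n$ part (to absorb the cross term) and another factor $q$ from H\"older, the exponent still lands in the admissible regime $\alpha/(4\pi)+\beta/2\leq 1$ of Lemma \ref{lemtm2}. This is precisely why the threshold in the hypothesis is $2\pi(2-\beta)/(1-\|U\|^2)$ and why $\|U\|\neq 0$ is essential: the strict gap between $p(1-\|U\|^2)$ and $2\pi(2-\beta)$ provides the room needed for all three small-loss factors.
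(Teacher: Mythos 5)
Your proof is correct and follows essentially the same route as the paper's: Br\'ezis--Lieb decomposition $U_n=V_n+U$ giving $\|V_n\|^2\to 1-\|U\|^2$, the inequality $|U_n|^2\le(1+\varepsilon)|V_n|^2+C_\varepsilon|U|^2$, H\"older to separate the two exponentials, and then Lemma \ref{lemtm2} on each factor. The only cosmetic difference is that you split the weight $|x|^{-\beta}$ across both H\"older factors, whereas the paper keeps the whole weight on the $V_n$-factor (so it must also ensure $q\beta<2$); both calibrations work.
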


    \begin{proof}
    Since $U_n\rightharpoonup U\neq0$ and $\|\nabla U_n\|_2=1$, we conclude that
    \begin{align*}
    \|U_n-U\|^2=1-2\langle U_n,U\rangle+\|U\|^2\rightarrow 1-\|U\|^2<\frac {2\pi(2-\beta)}{p}
    \end{align*}
    Thus, for large $n$ we have
    \begin{align*}
    \frac{p\|U_n-U\|^2}{4\pi}+\frac{\beta}{2}<1.
    \end{align*}
    Now we can choose $q>1$ close to 1 and $\epsilon>0$ such that
    \begin{align*}
    \frac{qp(1+\epsilon^2)\|U_n-U\|^2}{4\pi}+\frac{q\beta}{2}\leq 1.
    \end{align*}
    Lemmas \ref{lemtm2} shows that
    \begin{align*}
    \int_{\Omega}\frac{e^{qp(1+\epsilon^2)|U_n-U|^2}}{|x|^{q\beta}}\leq  C(\Omega).
    \end{align*}
    Moreover, since
    \begin{align*}
    p|U_n|^2 \leq p (1+ \epsilon ^2)|U_n-U|^2 + p(1+1/\epsilon ^2)|U|^2,
    \end{align*}
    which can be proved by Young inequality, then it follows that
    \begin{align*}
    e^{p|U_n|^2 }\leq e^{p (1+ \epsilon ^2)|U_n-U|^2} e^{ p(1+1/\epsilon ^2)|U|^2}.
    \end{align*}
    Consequently, by  H\"{o}lder inequality,
    \begin{align*}
    \begin{split}
    \int_{\Omega}\frac{e^{p|U_n|^2}}{|x|^\beta}&\leq \left (\int_{\Omega}\frac{e^{qp(1+\epsilon^2)|U_n-U|^2}}{|x|^{q\beta}}\right)^{1/q}\left(\int_{\Omega}e^{rp(1+1/\epsilon^2)|U|^2}\right)^{1/r}\leq C\left (\int_{\Omega}e^{rp(1+1/\epsilon^2)|U|^2}\right),
    \end{split}
    \end{align*}
    for large $n$, where $r=\frac{q}{q-1}$. By Lemma \ref{lemtm2}, we know the second term in the last inequality is bounded, and this lemma is proved.
    \end{proof}

\begin{remark}\label{remccp1}\rm
    Lemma \ref{lemtm3} is actually an expression that Concentration-compactness principle for a singular Trudinger-Moser inequality which is better than (\cite{ds}, Lemma 2.3). However, it is still not clear whether $\frac{2\pi(2-\beta)}{1-\|U\|^2}$ is sharp or not. This is still an open question.
    \end{remark}

    \begin{lemma}\label{lemtm4}
    If $V\in H^1_0(\Omega,\mathbb{R}^k),\ \alpha>0,\ q>0,\ \beta\in [0,2)$ and $\|V\|\leq N$ with $\frac{\alpha N^2}{4\pi}+\frac{\beta}{2}<1$, then there exists $C=C(\alpha,N,q)>0$ such that
    \begin{align}\label{asd1}
    \int_{\Omega} |V|^q \frac{e^{\alpha |V|^2}}{|x|^\beta}\leq C\|V\|^q.
    \end{align}
    \end{lemma}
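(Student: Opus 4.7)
The strategy is a routine combination of H\"older's inequality, the singular Trudinger--Moser inequality from Lemma \ref{lemtm2}, and the Sobolev embedding for the product space.

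First I would choose $r>1$ sufficiently close to $1$ so that simultaneously $r\beta<2$ and $\frac{r\alpha N^2}{4\pi}+\frac{r\beta}{2}\leq 1$. Such an $r$ exists: the hypothesis $\frac{\alpha N^2}{4\pi}+\frac{\beta}{2}<1$ is strict, so the continuous function $r\mapsto \frac{r\alpha N^2}{4\pi}+\frac{r\beta}{2}$ remains $\leq 1$ for $r$ slightly larger than $1$, and the same hypothesis also forces $\beta<2$, which keeps $r\beta<2$ for $r$ near $1$.

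With $r'=r/(r-1)$, I would then apply H\"older's inequality to split the integrand:
\begin{align*}
\int_{\Omega} |V|^q \frac{e^{\alpha |V|^2}}{|x|^\beta}\,dx \leq \left(\int_{\Omega} \frac{e^{r\alpha |V|^2}}{|x|^{r\beta}}\,dx\right)^{1/r}\left(\int_{\Omega} |V|^{qr'}\,dx\right)^{1/r'}.
\end{align*}
For the first factor, set $W:=V/\|V\|$ (the estimate being trivial if $V\equiv 0$) so that $\|W\|=1$ and $r\alpha|V|^2=(r\alpha\|V\|^2)|W|^2$ with $r\alpha\|V\|^2\leq r\alpha N^2$. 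Since $\frac{r\alpha\|V\|^2}{4\pi}+\frac{r\beta}{2}\leq \frac{r\alpha N^2}{4\pi}+\frac{r\beta}{2}\leq 1$, Lemma \ref{lemtm2} (applied with parameter $r\alpha\|V\|^2$ and singular exponent $r\beta$) yields
\begin{align*}
\int_{\Omega} \frac{e^{r\alpha |V|^2}}{|x|^{r\beta}}\,dx\leq C(\Omega).
\end{align*}
For the second factor, the continuous embedding $H^1_0(\Omega,\mathbb{R}^k)\hookrightarrow L^{qr'}(\Omega,\mathbb{R}^k)$, which follows from the inequality (\ref{eqk}) together with the standard Sobolev embedding, gives $\int_{\Omega}|V|^{qr'}\,dx\leq C\|V\|^{qr'}$, so that $\bigl(\int_{\Omega}|V|^{qr'}\,dx\bigr)^{1/r'}\leq C\|V\|^{q}$. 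Multiplying the two bounds produces (\ref{asd1}) with $C=C(\alpha,N,q)$.

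The only delicate point is the initial selection of $r$: one must use the strict inequality $\frac{\alpha N^2}{4\pi}+\frac{\beta}{2}<1$ in an essential way to create room for the Trudinger--Moser estimate after the H\"older splitting. Once $r$ is fixed, the rest of the argument is mechanical and all constants depend only on $\alpha$, $N$, $q$ (and the fixed data $\Omega,\beta,k$), as asserted.
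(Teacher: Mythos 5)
Your proof is correct and matches the paper's argument almost exactly: both select $r>1$ close to $1$ so that $\frac{r\alpha N^2}{4\pi}+\frac{r\beta}{2}\leq 1$, split the integral by H\"older with conjugate exponents $r$ and $r/(r-1)$, bound the exponential factor by Lemma \ref{lemtm2} after normalizing, and use the embedding $H^1_0(\Omega,\mathbb{R}^k)\hookrightarrow L^{qr/(r-1)}(\Omega,\mathbb{R}^k)$ for the power factor. You spell out the normalization $W=V/\|V\|$ a bit more explicitly than the paper does, which is a minor improvement in clarity; the paper additionally records the side condition $sq\geq 1$ (with $s=r/(r-1)$) that your choice of $r$ near $1$ also guarantees.
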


    \begin{proof}
    We consider $r> 1$ close to 1 such that $\frac{r \alpha N^2}{4\pi}+\frac{r\beta}{2} \leq 1$ and $sq\geq 1$, where $s=\frac {r}{r-1}$. By using   H\"{o}lder  inequality and Lemma \ref{lemtm2}, we have
    \begin{align*}
    \begin{split}
    \int_{\Omega} |V|^q \frac{e^{\alpha |V|^2}}{|x|^\beta} &\leq  \left (\int_{\Omega}\frac{e^{r\alpha |V|^2}}{|x|^{r\beta}}\right)^{1/r}\|V\|^q_{qs}
    \leq C(\Omega)\|V\|^q_{qs}.
    \end{split}
    \end{align*}
    Finally, using the continuous embedding $H^1_0(\Omega,\mathbb{R}^k)\hookrightarrow L^{sq}(\Omega,\mathbb{R}^k)$, we conclude that
    \begin{align*}\label{asd1}
    \int_{\Omega} |V|^q \frac{e^{\beta |V|^2}}{|x|^\beta}\leq C\|V\|^q.
    \end{align*}
    \end{proof}

\section{{\bfseries The variational framework}}\label{vf}
    We now consider the functional $I$ given by
    \begin{equation*}
    I_\varepsilon(U)=\frac {1}{2}M(\|U\|^2)-\int_{\Omega}\frac{F(x,U)}{|x|^\beta}dx-\varepsilon\int_{\Omega}U\cdot H(x)dx.
    \end{equation*}

\begin{lemma}\label{ic1}
Under our assumptions we have that $I$ is well defined and $C^1$ on $H^1_0(\Omega,\mathbb{R}^k)$. Moreover,
\begin{equation*}
    \langle I_\varepsilon'(U),\Phi\rangle_*=m(\|U\|^2)\langle U,\Phi\rangle-\int_{\Omega}\frac{\Phi\cdot\nabla {F(x,U)}}{|x|^\beta} dx-\varepsilon\int_{\Omega}\Phi\cdot H dx,
    \end{equation*}
where $\Phi\in H^1_0(\Omega,\mathbb{R}^k)$, here $\langle \cdot,\cdot \rangle_*$ simply denotes the dual pairing between $H^1_0(\Omega,\mathbb{R}^k)$ and its dual space $\left(H^1_0(\Omega,\mathbb{R}^k)\right)^*$.
\end{lemma}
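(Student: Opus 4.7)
The plan is to verify separately (i) that $I_\varepsilon(U)$ is real-valued for each $U$, (ii) that $I_\varepsilon$ has a Gateaux derivative at every $U$ given by the claimed formula, and (iii) that $U \mapsto I_\varepsilon'(U)$ is continuous as a map into the dual $(H^1_0(\Omega, \mathbb{R}^k))^*$; together these upgrade $I_\varepsilon$ to Fréchet $C^1$.

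For (i), the Kirchhoff piece $\tfrac{1}{2}M(\|U\|^2)$ is finite because $M \in C^1([0,\infty))$, and $\varepsilon \int_\Omega U \cdot H\,dx$ is meaningful through the duality pairing since $H \in \big((H^1_0(\Omega))^*\big)^k$. The work is with $\int_\Omega F(x,U)/|x|^\beta\, dx$. Combining $(F_0)$, $(F_1)$ and the subcritical or critical growth of $f_i$ yields, for every $q \geq 2$ and every $\alpha > 0$ (resp.\ $\alpha > \alpha_0$), a constant $C = C(\alpha, q)$ with
$$|F(x,U)| \leq C|U|^2 + C|U|^q e^{\alpha|U|^2}, \qquad |\nabla F(x,U)| \leq C|U| + C|U|^{q-1}e^{\alpha|U|^2},$$
and pointwise finiteness then follows from Lemma \ref{lemtm2} combined with the Sobolev embedding.

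For (ii), the chain rule handles the Kirchhoff piece and linearity handles the third piece; for the nonlinear piece the mean value theorem writes the difference quotient as $\nabla F(x, U + \tau_t(x)\Phi) \cdot \Phi$ with $\tau_t(x) \in (0,t)$, and since $\|U + \tau_t \Phi\| \leq N := \|U\| + \|\Phi\|$ uniformly for $|t| \leq 1$, the growth estimate on $\nabla F$ together with Lemma \ref{lemtm4} produces a dominator in $L^1(\Omega, |x|^{-\beta}dx)$. Lebesgue's dominated convergence theorem then yields the stated formula for $\langle I_\varepsilon'(U), \Phi\rangle_*$.

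For (iii), let $U_n \to U$ strongly in $H^1_0(\Omega, \mathbb{R}^k)$, so $\|U_n\| \leq N$ for some $N$. Continuity of $m$ and strong convergence give $m(\|U_n\|^2)\langle U_n, \cdot\rangle \to m(\|U\|^2)\langle U, \cdot\rangle$ in the dual norm. For the nonlinear piece, extract a subsequence with $U_n \to U$ a.e.\ and $|U_n| \leq g \in H^1_0(\Omega)$ pointwise; a Hölder argument with exponent $r > 1$ close to $1$, combined with Lemma \ref{lemtm4} applied with $\alpha$ chosen so that $r\alpha N^2/(4\pi) + r\beta/2 \leq 1$, bounds
$$\int_\Omega \frac{|\nabla F(x, U_n)|^r}{|x|^\beta}\, dx \leq C$$
uniformly in $n$, giving equi-integrability of $\nabla F(x, U_n) \cdot \Phi/|x|^\beta$ and, via Vitali's convergence theorem,
$$\sup_{\|\Phi\| \leq 1} \left|\int_\Omega \frac{(\nabla F(x, U_n) - \nabla F(x, U))\cdot \Phi}{|x|^\beta}\,dx\right| \to 0.$$
The main obstacle is this last step in the critical case, where the requirement $\alpha > \alpha_0$ competes with the size of $N$; localizing to a neighborhood of $U$ (so $N$ can be taken arbitrarily close to $\|U\|$) and, if necessary, splitting $\Omega$ into regions of large and small $|U_n|$ produces a compatible $(\alpha, r)$.
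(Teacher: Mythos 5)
Your proposal is considerably more detailed than the paper's own argument, which only verifies that $\int_\Omega F(x,U)/|x|^\beta\,dx$ is finite (via $(F_1)$, $(F_3)$, the pointwise growth bound $|\nabla F(x,U)|\le C e^{\alpha|U|^2}$, and Lemma~\ref{lemtm4}) and then simply asserts the $C^1$ regularity with the stated derivative. Parts (i) and (ii) of your plan track the paper's approach and the standard mean-value/dominated-convergence argument, respectively, and are fine.

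The weak spot is part (iii), and you have already half-noticed it. To bound $\int_\Omega |\nabla F(x,U_n)|^r/|x|^{r\beta}\,dx$ you invoke Lemma~\ref{lemtm4}, which requires $r\alpha N^2/(4\pi)+r\beta/2<1$ with $N\ge\sup_n\|U_n\|$. In the critical case the pointwise growth bound is only available for $\alpha>\alpha_0$, so once $\|U\|^2\ge 2\pi(2-\beta)/\alpha_0$ no admissible pair $(\alpha,r)$ exists, and ``localizing so $N$ is close to $\|U\|$'' does not help because $\|U\|$ itself can be arbitrarily large. The fix is already in your hands: having extracted a subsequence with $|U_n|\le g$ a.e.\ for a fixed $g\in H^1_0(\Omega)$ (Lemma~\ref{lemdr}), use the \emph{non-uniform} statement in Lemma~\ref{lemtm2} (or Lemma~\ref{lemtm1} applied to $g$), which asserts $\int_\Omega e^{\gamma g^2}/|x|^{r\beta}\,dx<\infty$ for \emph{every} $\gamma>0$ as long as $r\beta<2$, with no constraint on $\|g\|$. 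Then $|\nabla F(x,U_n)|^r/|x|^{r\beta}\le C\, e^{r\alpha g^2}/|x|^{r\beta}$ is a fixed $L^1$ dominator, $\nabla F(x,U_n)\to\nabla F(x,U)$ a.e.\ by continuity of each $f_i$, and dominated convergence gives $\nabla F(x,U_n)\to\nabla F(x,U)$ in $L^r(\Omega,|x|^{-r\beta}dx)$; a H\"older estimate and the embedding $H^1_0\hookrightarrow L^{r'}$ then give convergence of $I_\varepsilon'(U_n)$ in the dual norm along the subsequence, hence for the full sequence by the usual subsequence argument. Replacing the appeal to Lemma~\ref{lemtm4} by this dominated-convergence step removes the $\alpha$-versus-$N$ conflict and makes part (iii) sound in both the subcritical and critical cases.
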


\begin{proof}
 We have that $f_i$ is continuous and has subcritical (or critical) growth at $\infty$, as defined in (\ref{1.2}) (or (\ref{1.3})). Thus, given $\alpha>0$ (or $\alpha>\alpha_0$), there exists $C>0$ such that $|f_i(x,U)|\leq Ce^{\alpha |U|^2}$ for all $(x,U)\in\Omega \times \mathbb{R}^k$. Then,
     \begin{equation} \label{3.1}
    |\nabla F(x,U)|\leq \sum^k_{i=1}|f_i(x,U)|\leq C_1 e^{\alpha |U|^2},
     \ \ \text{for all }(x,U)\in\Omega \times \mathbb{R}^k.\\[3pt]
    \end{equation}
By $(F_1)$, given $\epsilon>0$ there exists $\delta>0$ such that
    \begin{equation}\label{3.2}
    |F(x,U)|\leq \frac {\lambda_1 m_0-\epsilon}{2} |U|^2
     \ \ \text{always that }|U|<\delta.\\[3pt]
    \end{equation}
Thus, using (\ref{3.1}), (\ref{3.2}) and $(F_3)$, we have
    \begin{align*}
    \int_{\Omega}\frac{|F(x,U)|}{|x|^\beta}dx \leq \frac {\lambda_1 m_0-\epsilon}{2}\int_{\Omega}\frac{|U|^2}{|x|^\beta} dx + C_1\int_{\Omega}\frac{|U|e^{\alpha |U|^2}}{|x|^\beta}dx.
    \end{align*}
Considering the continuous imbedding $H^1_0(\Omega,\mathbb{R}^k)\hookrightarrow L^s(\Omega,\mathbb{R}^k)$ for $s\geq 1$ and using Lemma \ref{lemtm4}, it follows that $\frac{F(x,U)}{|x|^\beta}\in L^1(\Omega)$ which implies that $I_\varepsilon$ is well defined. And we can see that $I\in C^1(H^1_0(\Omega,\mathbb{R}^k),\mathbb R)$ with
    \begin{equation*}
    \langle I_\varepsilon'(U),\Phi\rangle_*=m(||U||^2)\langle U,\Phi\rangle-\int_{\Omega}\frac{\Phi\cdot\nabla {F(x,U)}}{|x|^\beta} dx-\varepsilon\int_{\Omega}\Phi\cdot H dx,
    \end{equation*}
for all $U, \Phi\in H^1_0(\Omega,\mathbb{R}^k)$.
\end{proof}

    From Lemma \ref{ic1}, we have that critical points of the functional $I_\varepsilon$ are precisely  weak solutions of problem (\ref{P}). In the next three lemmas we check that the functional $I_\varepsilon$ satisfies the geometric conditions of the Mountain-pass theorem.

    \begin{lemma}\label{lemgc1}
    Suppose that $(M_1)$ and $(F_1),\ (F_3)$ hold and the function $f_i$ has subcritical (or critical) growth at $\infty$. Then for small $\varepsilon$, there exist positive number $\rho_\varepsilon$ and $\varsigma$ such that
    \begin{align*}
    I_\varepsilon(U)\geq \varsigma,\ \ \forall U\in H^1_0(\Omega,\mathbb{R}^k)\ \ \mbox{with}\ \ \|U\|=\rho_\varepsilon.
    \end{align*}
    Moreover, $\rho_\varepsilon$ can be chosen such that $\rho_\varepsilon\to0$ as $\varepsilon\to0$.
    \end{lemma}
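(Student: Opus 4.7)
The plan is to estimate each of the three pieces of
$$I_\varepsilon(U)=\tfrac12 M(\|U\|^2)-\int_\Omega \frac{F(x,U)}{|x|^\beta}\,dx-\varepsilon\int_\Omega U\cdot H\,dx$$
separately on a small sphere, and then balance the coefficients against $\varepsilon$. First, by $(M_1)$ we have $m(\tau)\ge m_0$ for $\tau\ge0$, hence $\tfrac12 M(\|U\|^2)\ge \tfrac{m_0}{2}\|U\|^2$. Second, the linear perturbation is controlled by duality: $\bigl|\varepsilon\int_\Omega U\cdot H\,dx\bigr|\le \varepsilon\|H\|_*\|U\|$.

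The heart of the argument is the upper bound for the potential term. By $(F_1)$ there exist $\tau>0$ and $\delta>0$ with $F(x,U)\le \frac{\lambda_1 m_0-\tau}{2}|U|^2$ whenever $|U|<\delta$. For $|U|\ge\delta$ I use the growth hypothesis (\ref{3.1}): in the subcritical case any $\alpha>0$ works, while in the critical case I fix an $\alpha>\alpha_0$ slightly above $\alpha_0$. Since the exponential dominates any polynomial and $|U|\le \delta^{1-q}|U|^q$ on $\{|U|\ge\delta\}$ for any fixed $q>2$, I obtain the combined bound
$$F(x,U)\le \frac{\lambda_1 m_0-\tau}{2}|U|^2+C|U|^q e^{\alpha|U|^2}\quad\text{on }\Omega\times\mathbb{R}^k.$$
Dividing by $|x|^\beta$ and integrating, the first piece is absorbed by the definition of $\lambda_1$ (giving $\tfrac{\lambda_1 m_0-\tau}{2\lambda_1}\|U\|^2$), and the exponential piece is controlled by Lemma \ref{lemtm4} as soon as $\|U\|=\rho$ is chosen small enough that $\frac{\alpha\rho^2}{4\pi}+\frac{\beta}{2}<1$; this yields
$$\int_\Omega\frac{|U|^q e^{\alpha|U|^2}}{|x|^\beta}\,dx\le C\|U\|^q.$$

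Combining the three estimates on the sphere $\|U\|=\rho$ gives
$$I_\varepsilon(U)\ge \Bigl(\tfrac{m_0}{2}-\tfrac{\lambda_1 m_0-\tau}{2\lambda_1}\Bigr)\rho^2-C\rho^q-\varepsilon\|H\|_*\rho=\tfrac{\tau}{2\lambda_1}\rho^2-C\rho^q-\varepsilon\|H\|_*\rho.$$
Since $q>2$, the function $g(\rho):=\tfrac{\tau}{2\lambda_1}\rho-C\rho^{q-1}$ is strictly positive for small $\rho>0$ and behaves like $\tfrac{\tau}{2\lambda_1}\rho$ at $0$. Choosing $\rho_\varepsilon=c\varepsilon$ with $c>0$ fixed so that $\tfrac{\tau c}{2\lambda_1}>2\|H\|_*$ and $\varepsilon$ so small that $C(c\varepsilon)^{q-1}\le\tfrac{\tau}{4\lambda_1}c\varepsilon$, the bracket becomes at least $\tfrac{\tau c}{4\lambda_1}\varepsilon-\varepsilon\|H\|_*\ge \tfrac12\|H\|_*\varepsilon$, giving $I_\varepsilon(U)\ge \tfrac12 c\|H\|_*\varepsilon^2=:\varsigma>0$. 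By construction $\rho_\varepsilon\to0$ as $\varepsilon\to0$.

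The only delicate point is the interplay between the choice of $\alpha$ (which forces $\rho$ to be small in the critical case through the sharp exponent constraint $\frac{\alpha\rho^2}{4\pi}+\frac{\beta}{2}<1$) and the decay $\rho_\varepsilon\to0$: since I am free to shrink $\rho_\varepsilon$ with $\varepsilon$, the exponent constraint is satisfied automatically for small $\varepsilon$. The subcritical case is easier because any $\alpha>0$ may be taken, so the same estimates go through without the side condition on $\rho$.
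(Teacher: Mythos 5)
Your proof is correct and follows essentially the same plan as the paper's: bound the three terms of $I_\varepsilon$ on a small sphere using $(M_1)$, $(F_1)$, the exponential growth, and Lemma \ref{lemtm4}, then balance against the linear $\varepsilon$-term. There are, however, two differences worth recording. First, you bound $\int_\Omega F(x,U)/|x|^\beta\,dx$ directly, combining the $(F_1)$ quadratic bound for small $|U|$ with the exponential bound for large $|U|$, whereas the paper first invokes $(F_3)$ to write $F\leq\frac{1}{\mu}U\cdot\nabla F$ and then estimates $|\nabla F|$; your route is actually cleaner, because it sidesteps the paper's informal assertion that $(F_1)$ yields a pointwise bound on $|\nabla F|$ near $0$ — $(F_1)$ as written only controls $F$, not its gradient. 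Second, your explicit choice $\rho_\varepsilon=c\varepsilon$ makes the claim $\rho_\varepsilon\to 0$ transparent; the paper merely takes $\rho_\varepsilon=\min\{N,\xi_\varepsilon\}$ and asserts the decay.

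Two small points to tighten. (i) When you claim $F(x,U)\leq C|U|^q e^{\alpha|U|^2}$ for $|U|\geq\delta$, you need to say how the bound \eqref{3.1} on $\nabla F$ is converted into a bound on $F$: either via $(F_0)$ and the fundamental theorem of calculus,
\[
F(x,U)=\int_0^1 U\cdot\nabla F(x,sU)\,ds\leq |U|\,\sup_{0\leq s\leq 1}|\nabla F(x,sU)|\leq C|U|e^{\alpha|U|^2},
\]
or via $(F_3)$, $F\leq\frac{|U|}{\mu}|\nabla F|$; then $|U|\leq\delta^{1-q}|U|^q$ completes the step. (ii) Your constants are a hair off: to conclude $\frac{\tau c}{4\lambda_1}\varepsilon-\varepsilon\|H\|_*\geq\frac12\|H\|_*\varepsilon$ you need $\frac{\tau c}{4\lambda_1}\geq\frac32\|H\|_*$, i.e. $\frac{\tau c}{2\lambda_1}\geq 3\|H\|_*$, not merely $>2\|H\|_*$. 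Both are trivial to fix.
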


    \begin{proof}
     By $(F_1)$, given $\kappa>0$, there exists $\delta>0$ such that
    \begin{align*}
    |\nabla F(x,U)|\leq (\lambda_1 m_0-\kappa)|U|
    \end{align*}
    always that $|U|<\delta$. On the other hand, for $\alpha>0$ (subcritical case) or $\alpha>\alpha_0$ (critical case), we have that there exists $C_1>0$ such that $|f_i (x,U)|\leq C_1|U|^{q-1} e^{\alpha |U|^2}$ for all $|U|\geq \delta$ with $q>2$. Thus,
    \begin{align}\label{3.3}
    |\nabla F(x,U)|\leq \sum^k_{i=1} |f_i (x,U)|\leq (\lambda_1 m_0-\kappa)|U|+C_2|U|^{q-1} e^{\alpha |U|^2},\ \ \forall (x,U)\in \Omega\times \mathbb{R}^k.
    \end{align}
    Thus, by $(M_1),\ (F_3)$ and (\ref{3.3}),
    \begin{align*}
    \begin{split}
    I_\varepsilon(U)=&\frac {1}{2}M(\|U\|^2)-\int_{\Omega}\frac{F(x,U)}{|x|^\beta}dx-\varepsilon\int_{\Omega}U\cdot H(x)dx \\
     \geq &\frac{1}{2}m_0\|U\|^2-\int_{\Omega}\frac{F(x,U)}{|x|^\beta}dx-\varepsilon\int_{\Omega}U\cdot H(x)dx \\
     \geq &\frac{1}{2}m_0\|U\|^2-\frac {1}{\mu}\int_{\Omega}\frac{U\cdot \nabla F(x,U)}{|x|^\beta}dx-\varepsilon\|U\|\|H\|_* \\
     \geq &\frac{1}{2}m_0\|U\|^2-\frac {\lambda_1 m_0-\kappa}{\mu}\int_{\Omega}\frac{|U|^2}{|x|^\beta}dx-C_2\int_{\Omega}|U|^q \frac{e^{\alpha |U|^2}}{|x|^\beta}dx-\varepsilon\|U\|\|H\|_* \\
     \geq &\Big(\frac {m_0}{2}-\frac {m_0}{\mu}+\frac {\kappa}{\mu \lambda_1}\Big)\|U\|^2-C_2\int_{\Omega}|U|^q \frac{e^{\alpha |U|^2}}{|x|^\beta}dx-\varepsilon\|U\|\|H\|_*,
    \end{split}
    \end{align*}
    By Lemma \ref{lemtm4}, there exists $N>0$ such that $\alpha N^2/4\pi+\beta/2<1$ and we take $\|U\|\leq N$, there exists $C>0$ such that $\int_{\Omega}|U|^q \frac{e^{\alpha |U|^2}}{|x|^\beta}dx\leq C\|U\|^q$. Therefore,
    \begin{align*}
    I_\varepsilon(U)\geq \Big(\frac {m_0}{2}-\frac {m_0}{\mu}+\frac {\kappa}{\mu \lambda_1}\Big)\|U\|^2-C_3\|U\|^q-\varepsilon\|U\|\|H\|_*,
    \end{align*}
    Since $m_0>0,\ \mu>2\theta>2,\ q>2$, then for small $\varepsilon$, there exists $\xi_\varepsilon>0$ such that $\Big(\frac {m_0}{2}-\frac {m_0}{\mu}+\frac {\kappa}{\mu \lambda_1}\Big)\xi_\varepsilon^2-C_3\xi_\varepsilon^q-\varepsilon \xi_\varepsilon\|H\|_*>0$.
    Consequently, taking $\rho_\varepsilon=\min\{N,\xi_\varepsilon\}>0$, we can get that $I_\varepsilon(U)\geq\varsigma$ whenever $\|U\|=\rho_\varepsilon$ where $\varsigma:=\Big(\frac {m_0}{2}-\frac {m_0}{\mu}+\frac {\kappa}{\mu \lambda_1}\Big)\rho_\varepsilon^2-C_3\rho_\varepsilon^q-\varepsilon \rho_\varepsilon\|H\|_*>0$. And it is worth noting that, $\rho_\varepsilon\rightarrow 0$ as $\varepsilon\rightarrow 0$.
    \end{proof}

    \begin{lemma}\label{lemgc2}
    Assume that $(M_1),\ (M_3),\ (F_3)$ hold and the function $f_i$ has subcritical (or critical) growth at $\infty$. Then there exists $E\in H^1_0(\Omega,\mathbb{R}^k)$ with $\|E\|>\rho_\varepsilon$ such that
    \begin{align*}
     I_\varepsilon(E)<\inf_{\|U\|=\rho_\varepsilon}I(U).
    \end{align*}
    \end{lemma}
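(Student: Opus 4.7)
The plan is to exhibit $E=tV$ for some fixed $V\in H^1_0(\Omega,\mathbb{R}^k)\setminus\{0\}$ and $t>0$ large, and to show that $I_\varepsilon(tV)\to -\infty$ as $t\to +\infty$. This reduces the lemma to estimating the three terms of $I_\varepsilon$ separately: the Kirchhoff term $\frac{1}{2}M(\|tV\|^2)$ from above, the potential term $\int_\Omega F(x,tV)/|x|^\beta\,dx$ from below, and the linear perturbation by Cauchy--Schwarz.

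First, I would derive a polynomial upper bound for $M$ from $(M_3)$. Since $\theta M(s)-m(s)s$ is nondecreasing and vanishes at $s=0$, one has $m(s)\leq \theta M(s)/s$ for $s>0$, so $\tfrac{d}{ds}\ln M(s)\leq \theta/s$. Integrating from $1$ to $s\geq 1$ gives $M(s)\leq M(1)s^\theta$, and since $M$ is increasing this yields $M(s)\leq C_1 s^\theta+C_2$ for all $s\geq 0$, as already noted in Remark \ref{remcmm}. In particular, $M(\|tV\|^2)\leq C_1 t^{2\theta}\|V\|^{2\theta}+C_2$.

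Next, the key step is a superlinear lower bound for $F$ along rays via $(F_3)$. Fix $x\in\Omega$ and $U\in\mathbb{R}^k\setminus\{0\}$, and set $g(s):=F(x,sU)$ for $s>0$. Then $g'(s)=U\cdot \nabla F(x,sU)=\tfrac{1}{s}(sU)\cdot\nabla F(x,sU)\geq \tfrac{\mu}{s}F(x,sU)=\tfrac{\mu}{s}g(s)$, which together with $g(s)>0$ (by the first inequality in $(F_3)$) gives $(\ln g)'\geq \mu/s$. Integrating from $1$ to $s\geq 1$ yields the pointwise estimate $F(x,sU)\geq s^\mu F(x,U)$. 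Choosing any $V\in H^1_0(\Omega,\mathbb{R}^k)\setminus\{0\}$ and applying this for $x$ in the set where $V(x)\neq 0$ (and noting both sides are zero otherwise), I get
\begin{equation*}
\int_\Omega \frac{F(x,tV)}{|x|^\beta}\,dx\geq C_V\, t^\mu\quad\text{for all }t\geq 1,\qquad C_V:=\int_\Omega \frac{F(x,V)}{|x|^\beta}\,dx>0.
\end{equation*}

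Combining the three estimates,
\begin{equation*}
I_\varepsilon(tV)\leq \frac{1}{2}\bigl(C_1 t^{2\theta}\|V\|^{2\theta}+C_2\bigr)-C_V t^\mu+\varepsilon t\|V\|\|H\|_*,\qquad t\geq 1.
\end{equation*}
Since $\mu>2\theta$ by $(F_3)$, the term $-C_V t^\mu$ dominates, hence $I_\varepsilon(tV)\to -\infty$ as $t\to +\infty$. Thus one can pick $t_0$ large enough so that $\|t_0 V\|>\rho_\varepsilon$ and $I_\varepsilon(t_0 V)<\varsigma\leq \inf_{\|U\|=\rho_\varepsilon}I_\varepsilon(U)$, and then set $E:=t_0 V$. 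The main subtlety is ensuring the pointwise lower bound on $F$ from $(F_3)$ is applied rigorously, which is handled by the strict positivity of $F$ on $U\neq 0$ built into $(F_3)$; everything else is a direct polynomial comparison using $\mu>2\theta$.
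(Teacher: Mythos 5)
Your proof is correct and follows essentially the same strategy as the paper: derive a polynomial upper bound $M(s)\leq C_1 s^\theta + C_2$ from $(M_3)$, a superlinear growth bound for $F$ along rays from $(F_3)$, and use $\mu>2\theta$ to conclude $I_\varepsilon(tV)\to-\infty$. The one place your implementation differs from the paper's is in the lower bound for the $F$-term: the paper passes to polar coordinates, deduces $F(x,U)\geq \bigl(\min_{|W|=1}F(x,W)\bigr)|U|^\mu$ for $|U|\geq 1$, and then integrates $C|U|^\mu$ over a compact subset $K\Subset\Omega$ (compactness being needed to get a uniform positive constant from the infimum of $F$ on $K\times S^{k-1}$). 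You instead integrate the differential inequality $(\ln g)'\geq \mu/s$ along each ray to get the pointwise bound $F(x,tV(x))\geq t^\mu F(x,V(x))$, which sidesteps the compact-subset step entirely; after noting $C_V:=\int_\Omega F(x,V)/|x|^\beta\,dx$ is finite (by Lemma \ref{ic1}) and strictly positive (by $(F_3)$ and $V\not\equiv 0$), you get the same $-C_V t^\mu$ decay. Both arguments are sound; yours is marginally cleaner and more self-contained, while the paper's makes the homogeneity explicit via the polar parametrization.
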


    \begin{proof}
     We shall make use of the polar coordinate representation
    \begin{align*}
    U=(\nu,\phi)=(\nu,\phi_1,\ldots,\phi_{k-1}),
    \end{align*}
    where $\nu\geq 1,\ -\pi\leq\phi_1\leq\pi,\ 0\leq\phi_2,\ldots,\phi_{k-1}\leq\pi$ and
    \begin{align*}
    \begin{split}
    u_1&=\nu \sin(\phi_1)\sin(\phi_2)\cdots\sin(\phi_{k-1}),\\
    u_2&=\nu \cos(\phi_1)\sin(\phi_2)\cdots\sin(\phi_{k-1}),\\
    u_3&=\nu \cos(\phi_2)\cdots\sin(\phi_{k-1}),\\
    \vdots \\
    u_k&=\nu \cos(\phi_{k-1}).
    \end{split}
    \end{align*}
    Substituting in $(F_3)$, we get $0<\mu F(x,U)\leq \nu F_{\nu}(x,U)$ and hence
    \begin{align*}
    F(x,U)\geq\left(\min_{|W|=1}F(x,W)\right)|U|^\mu,\ \ \mbox{for all}\ \ x\in \Omega \ \ \mbox{and}\ \ |U|\geq 1.
    \end{align*}
    Hence, for all $U\in H^1_0(\Omega,\mathbb{R}^k)\setminus\{0\}$ with $|U|\geq 1$, we have that
    \begin{align*}
    F(x,U)\geq C|U|^\mu.
    \end{align*}
    From (\ref{1.4}), we have that $0<M(t)\leq M(1) t^\theta$ for all $ t\geq 1$. Thus we have
    \begin{align*}
    I_\varepsilon(tU)\leq C_1t^{2\theta}\|U\|^{2\theta}-C_2 t^\mu \int_{K}\frac{|U|^\mu}{|x|^\beta} dx-t\varepsilon\int_{\Omega}U\cdot H dx,
    \end{align*}
    for $t$ large enough, where $C_1,\ C_2>0,\ \mu>2\theta>2$, $K$ is the compact subset of $\Omega$ which yields $I_\varepsilon(tU)\rightarrow -\infty$ as $t\rightarrow+\infty$. Setting $E=tU$ with $t$ large enough such that $I_\varepsilon(E)<0$ with $\|E\|>\rho_\varepsilon$. Thus, the proof is finished.
    \end{proof}

    \begin{lemma}\label{lemgc3}
    If $f_i$ has subcritical (or critical) growth at $\infty$, there exists $\eta_\varepsilon>0$ and $V\in H^1_0(\Omega,\mathbb{R}^k)\backslash \{\mathbf{0}\}$ such that $I_\varepsilon(tV)<0$ for all $0<t<\eta_\varepsilon$. In particular,
    \begin{align*}
    \inf_{\|U\|\leq \eta_\varepsilon}I_\varepsilon(U)<0.
    \end{align*}
    \end{lemma}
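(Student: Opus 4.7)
The plan is to exploit the linear-in-$t$ term $-\varepsilon t \int_\Omega V\cdot H\,dx$ in $I_\varepsilon(tV)$ and show that it dominates the other contributions for small $t$. Since at least one component $h_i\not\equiv 0$ and $H\in \bigl(H^1_0(\Omega,\mathbb{R}^k)\bigr)^*$, there exists $V\in H^1_0(\Omega,\mathbb{R}^k)\setminus\{\mathbf{0}\}$ with $\|V\|=1$ and $c:=\int_\Omega V\cdot H\,dx>0$ (replace $V$ by $-V$ if necessary, then normalize).

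Next I would observe that hypothesis $(F_3)$ combined with $(F_0)$ implies $F(x,U)\geq 0$ on $\Omega\times\mathbb{R}^k$ (strict for $U\neq \mathbf{0}$ and zero at $\mathbf{0}$). Consequently, for any $t>0$,
\begin{equation*}
I_\varepsilon(tV)\;\leq\;\tfrac{1}{2}M(t^2\|V\|^2)\;-\;\varepsilon t\int_\Omega V\cdot H\,dx\;=\;\tfrac{1}{2}M(t^2)\;-\;\varepsilon c\, t.
\end{equation*}
Then by $(M_2)$ the function $m$ is nondecreasing, so for $t\in(0,1]$ we have $M(t^2)=\int_0^{t^2}m(\tau)\,d\tau\leq m(1)\,t^2$. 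Hence
\begin{equation*}
I_\varepsilon(tV)\;\leq\;\tfrac{1}{2}m(1)\,t^2\;-\;\varepsilon c\,t\;=\;t\Bigl(\tfrac{1}{2}m(1)\,t-\varepsilon c\Bigr),\qquad 0<t\leq 1.
\end{equation*}

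Finally, I would define
\begin{equation*}
\eta_\varepsilon\;:=\;\min\!\left\{1,\;\rho_\varepsilon,\;\frac{\varepsilon c}{m(1)}\right\}>0,
\end{equation*}
where $\rho_\varepsilon$ comes from Lemma \ref{lemgc1} (this ensures compatibility with the mountain-pass geometry). For every $t\in(0,\eta_\varepsilon)$ the right-hand side of the previous display is bounded above by $-\tfrac{1}{2}\varepsilon c\, t<0$, so $I_\varepsilon(tV)<0$. Since $\|tV\|=t<\eta_\varepsilon$, taking any such $t$ yields a point in the ball $\{\|U\|\leq \eta_\varepsilon\}$ where $I_\varepsilon$ is negative, so $\inf_{\|U\|\leq\eta_\varepsilon}I_\varepsilon(U)<0$. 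The argument uses neither the subcritical/critical dichotomy nor the exponential growth bounds in any essential way; only $(F_0)$, $(F_3)$, $(M_1)$--$(M_2)$ and $H\neq 0$ are needed. There is no serious obstacle: the only small care required is the choice of $V$ with positive pairing against $H$, which is automatic from $H\not\equiv 0$ in the dual space.
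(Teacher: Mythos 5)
Your argument is correct and is in essence the same as the paper's: you exploit the fact that the linear term $-\varepsilon t\int_\Omega V\cdot H$ dominates the $M$-term near $t=0$, while $(F_3)$ together with $(F_0)$ forces $F\geq 0$ so the nonlinearity contributes the right sign. The paper takes the concrete choice $V=(0,\ldots,v_i,\ldots,0)$ with $v_i$ the Riesz representative of $h_i$ (so $\int_\Omega h_i v_i=\|v_i\|^2>0$) and shows negativity of $\frac{d}{dt}I_\varepsilon(tV)$ on $(0,\eta_\varepsilon)$ before integrating from $I_\varepsilon(\mathbf{0})=0$; your direct pointwise estimate of $I_\varepsilon(tV)$ with an abstractly chosen $V$ satisfying $\langle H,V\rangle>0$ is a cosmetic variant of the same computation.

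One small inaccuracy to repair: you invoke $(M_2)$ to obtain $M(t^2)\leq m(1)\,t^2$ for $t\in(0,1]$, but $(M_2)$ is \emph{not} among the hypotheses of Theorem \ref{thm1.2} (subcritical case), which assumes only $(M_1)$ and $(M_3)$, and note that $(M_3)$ alone gives a \emph{lower} bound $M(t)\geq M(1)t^\theta$ on $[0,1]$, not an upper bound. The fix is immediate and costs nothing: the paper's standing assumption that $m$ is continuous gives $m\leq K:=\sup_{[0,1]}m<\infty$ on $[0,1]$, hence $M(t^2)\leq K t^2$ for $t\in[0,1]$, and your conclusion goes through unchanged with $m(1)$ replaced by $K$. (The paper's own proof tacitly uses the same continuity when it bounds $m(t^2\|v_i\|^2)\leq C$ for small $t$.)
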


    \begin{proof}
    Since $h_i\in \big(\big(H^1_0(\Omega)\big)^*,\|\cdot\|_*\big)\backslash\{0\}$ for some $i\in\{1,\ldots,k\}$, then by the Riesz representation theorem, the problem
    \begin{align*}
    -\Delta v_i=h_i,\ \ x\in\Omega;\ \ v_i=0\ \ \mbox{on}\ \ \partial\Omega,
    \end{align*}
    has a unique nontrivial weak solution $v_i$ in $H^1_0(\Omega)$. Thus,
    \begin{align*}
    \int_{\Omega} h_i v_i=\|v_i\|^2>0.
    \end{align*}
    Since $f_i(x,0,\ldots,0)=0$, by continuity, $(F_3)$ and (\ref{1.4}),  $\theta>1$, it follows that there exists $\eta_\varepsilon>0$ such that for all $0<t<\eta_\varepsilon$,
    \begin{align*}
    \begin{split}
    \frac{d}{dt}[I_\varepsilon((0,\ldots,tv_i,\ldots,0))]=&m(t^2\|v_i\|^2)t\|v_i\|^2-\int_{\Omega}\frac{v_if_i(x,(0,\ldots,tv_i,\ldots,0))}{|x|^\beta}
    -\varepsilon\int_{\Omega} h_i v_i \\
    \leq &C\|v_i\|^{2}t-\varepsilon\|v_i\|^2-\int_{\Omega}\frac{v_if_i(x,(0,\ldots,tv_i,\ldots,0))}{|x|^\beta}
    <0.
    \end{split}
    \end{align*}
    Using that $I_\varepsilon(\mathbf{0})=0$, it must hold that $I_\varepsilon(tV)<0$ for all $0<t<\eta_\varepsilon$ where $V=(0,\ldots,v_i,\ldots,0)$.
    \end{proof}

\section{{\bfseries On Palais-Smale sequences}}\label{ps}
    To prove that a Palais-Smale sequence converges to a weak solution of problem (\ref{P}), we need to establish the following lemma.

    \begin{lemma}\label{lem4.1}
    Assume that $(M_1),\ (M_3),\ (F_3)$ hold and $f_i$ has subcritical (or critical) growth at $\infty$. Let $\{U_n\}\subset H^1_0(\Omega,\mathbb{R}^k)$ be the Palais-Smale sequence for functional $I_\varepsilon$ at finite level. Then there exist $C>0$ such that
    \begin{align*}
    \|U_n\|\leq C,\ \ \int_{\Omega}\frac{\left|U_n\cdot \nabla F(x,U_n)\right|}{|x|^\beta}dx\leq C\ \ \mbox{and}\ \ \int_{\Omega}\frac{F(x,U_n)}{|x|^\beta}dx\leq C.
    \end{align*}
    \end{lemma}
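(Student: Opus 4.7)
The plan is to run the standard Ambrosetti--Rabinowitz boundedness argument, but carefully tailored to the nonlocal Kirchhoff coefficient. Let $\{U_n\}\subset H^1_0(\Omega,\mathbb{R}^k)$ satisfy $I_\varepsilon(U_n)\to c$ and $I_\varepsilon'(U_n)\to 0$ in $(H^1_0(\Omega,\mathbb{R}^k))^*$. Using the concrete expressions
\begin{equation*}
I_\varepsilon(U_n)=\tfrac{1}{2}M(\|U_n\|^2)-\int_\Omega\frac{F(x,U_n)}{|x|^\beta}\,dx-\varepsilon\int_\Omega U_n\cdot H\,dx,
\end{equation*}
\begin{equation*}
\langle I_\varepsilon'(U_n),U_n\rangle_*=m(\|U_n\|^2)\|U_n\|^2-\int_\Omega\frac{U_n\cdot\nabla F(x,U_n)}{|x|^\beta}\,dx-\varepsilon\int_\Omega U_n\cdot H\,dx,
\end{equation*}
I would form the combination
\begin{equation*}
\mu I_\varepsilon(U_n)-\langle I_\varepsilon'(U_n),U_n\rangle_*=\Bigl[\tfrac{\mu}{2}M(\|U_n\|^2)-m(\|U_n\|^2)\|U_n\|^2\Bigr]+\int_\Omega\frac{U_n\cdot\nabla F(x,U_n)-\mu F(x,U_n)}{|x|^\beta}\,dx-(\mu-1)\varepsilon\int_\Omega U_n\cdot H\,dx.
\end{equation*}

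The second integral is nonnegative by $(F_3)$. For the bracket I would use the inequality $\theta M(t)-m(t)t\geq 0$ from Remark~\ref{rem3} (equation \eqref{1.4}) together with $(M_1)$ to write
\begin{equation*}
\tfrac{\mu}{2}M(\|U_n\|^2)-m(\|U_n\|^2)\|U_n\|^2=\tfrac{\mu-2\theta}{2}M(\|U_n\|^2)+\bigl[\theta M(\|U_n\|^2)-m(\|U_n\|^2)\|U_n\|^2\bigr]\geq\tfrac{(\mu-2\theta)m_0}{2}\|U_n\|^2,
\end{equation*}
where I also used $M(t)\geq m_0 t$. Here the condition $\mu>2\theta$ from $(F_3)$ is exactly what is needed; this is the main conceptual obstacle, and it is resolved precisely by how $(F_3)$ is matched to $(M_1)$ and $(M_3)$.

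Combining these bounds with $|\mu I_\varepsilon(U_n)|\leq\mu|c|+o(1)$, $|\langle I_\varepsilon'(U_n),U_n\rangle_*|\leq o(1)\|U_n\|$, and $|\varepsilon\int_\Omega U_n\cdot H\,dx|\leq\varepsilon\|H\|_*\|U_n\|$, I obtain
\begin{equation*}
\tfrac{(\mu-2\theta)m_0}{2}\|U_n\|^2\leq\mu|c|+o(1)+\bigl[o(1)+(\mu-1)\varepsilon\|H\|_*\bigr]\|U_n\|,
\end{equation*}
which, being a quadratic inequality in $\|U_n\|$, forces $\|U_n\|\leq C$ for some $C>0$ independent of $n$.

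Having the uniform bound on $\|U_n\|$, the continuity of $m$ gives $m(\|U_n\|^2)\leq C'$. Then from $\langle I_\varepsilon'(U_n),U_n\rangle_*=o(1)$ I read off
\begin{equation*}
\int_\Omega\frac{U_n\cdot\nabla F(x,U_n)}{|x|^\beta}\,dx=m(\|U_n\|^2)\|U_n\|^2-\varepsilon\int_\Omega U_n\cdot H\,dx-o(1)\leq C,
\end{equation*}
and since the integrand is nonnegative by $(F_3)$, this yields the bound on $\int_\Omega|U_n\cdot\nabla F|/|x|^\beta\,dx$. Finally, one more application of $(F_3)$, namely $0<\mu F(x,U_n)\leq U_n\cdot\nabla F(x,U_n)$, gives $\int_\Omega F(x,U_n)/|x|^\beta\,dx\leq C/\mu$, completing all three estimates.
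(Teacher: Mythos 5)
Your proof is correct and follows essentially the same route as the paper: form the Ambrosetti--Rabinowitz combination $\mu I_\varepsilon(U_n)-\langle I_\varepsilon'(U_n),U_n\rangle_*$, drop the nonnegative $(F_3)$-term, and use $\theta M(t)-m(t)t\geq 0$ together with $(M_1)$ to extract a coercive quadratic in $\|U_n\|$. The only cosmetic differences are that you split the Kirchhoff bracket as $\tfrac{\mu-2\theta}{2}M+(\theta M-mt)$ and invoke $M(t)\geq m_0t$ (where the paper splits via $\tfrac{1}{2\theta}(\theta M-mt)+(\tfrac{1}{2\theta}-\tfrac{1}{\mu})mt$ and invokes $m\geq m_0$), and that you derive the bound on $\int_\Omega F/|x|^\beta$ from $(F_3)$ rather than directly from the boundedness of $I_\varepsilon(U_n)$; both choices are equivalent.
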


    \begin{proof}
    Let $\{U_n\} \subset H^1_0(\Omega,\mathbb{R}^k)$ be a sequence such that $I_\varepsilon(U_n)\rightarrow c$ and $I_\varepsilon'(U_n)\rightarrow 0$, where $|c|<\infty$, then we can take this as follows:
    \begin{equation}\label{4.1}
    I_\varepsilon(U_n)=\frac{1}{2}M(\|U_n\|^2)-\int_{\Omega}\frac{F(x,U_n)}{|x|^\beta}dx-\varepsilon\int_{\Omega}U_n\cdot H dx=c+\delta_n,
    \end{equation}
    where $\delta_n\rightarrow 0$ as $n\rightarrow \infty$, and
    \begin{equation}\label{4.2}
    \langle I_\varepsilon'(U_n),U_n\rangle_*=m(\|U_n\|^2)\|U_n\|^2-\int_{\Omega}\frac{U_n\cdot \nabla F(x,U_n)}{|x|^\beta}dx-\varepsilon\int_{\Omega}U_n\cdot H dx=o(\|U_n\|).
    \end{equation}
    Then for $n$ large enough, by $(M_1),\ (F_3)$ and (\ref{1.4}), it holds that
    \begin{align*}
    \begin{split}
    C+\|U_n\|\geq& I_\varepsilon(U_n)-\frac {1}{\mu} \langle I'_\varepsilon(U_n),U_n\rangle_* \\
    =&\frac {1}{2}M(\|U_n\|^2)-\frac {1}{\mu}m(\|U_n\|^2)\|U_n\|^2+\frac{1}{\mu} \int_{\Omega}\frac{\left (U_n\cdot \nabla F(x,U_n) -\mu F(x,U_n)\right )}{|x|^\beta}dx \\
    &-\frac{\mu-1}{\mu}\varepsilon\int_{\Omega}U_n\cdot H dx \\
    \geq& \frac {1}{2\theta}\Big[\theta M(\|U_n\|^2)-m(\|U_n\|^2)\|U_n\|^2\Big]+\Big(\frac {1}{2\theta}-\frac {1}{\mu}\Big)m(\|U_n\|^2)\|U_n\|^2 \\
    &-\frac{\mu-1}{\mu}\varepsilon \|U_n\| \|H\|_* \\
    \geq& \Big(\frac {1}{2\theta}-\frac {1}{\mu}\Big)m(\|U_n\|^2)\|U_n\|^2-\frac{\mu-1}{\mu}\varepsilon \|U_n\| \|H\|_* \\
    \geq& \frac {\mu-2\theta}{2\theta\mu}m_0\|U_n\|^2-\frac{\mu-1}{\mu}\varepsilon \|U_n\| \|H\|_*,
    \end{split}
    \end{align*}
    for some $C>0$. Since $\mu>2\theta>2,\ m_0>0,\ \varepsilon>0$, we obtain $\|U_n\|$ is bounded. From (\ref{4.1}) and (\ref{4.2}), it can be concluded directly that there exist $C>0$ such that $\int_{\Omega}\frac{U_n\cdot \nabla F(x,U_n)}{|x|^\beta}dx\leq C$ and $\int_{\Omega}\frac{F(x,U_n)}{|x|^\beta}dx\leq C$. Condition $(F_3)$ implies $U_n\cdot \nabla F(x,U_n)\geq 0$ for all $x\in\Omega$, thus we have $\int_{\Omega}\frac{\left|U_n\cdot \nabla F(x,U_n)\right|}{|x|^\beta}dx\leq C$.
    \end{proof}

    In order to show that the limit of a sequence in $H^1_0(\Omega,\mathbb{R}^k)$ is a weak solution of problem (\ref{P}) we will use the following convergence result due to Figueiredo-do \'{O}-Ref \cite{ddr2} and the dominated result due to do \'{O}-Medeiros-Severo \cite{dms}.
    \begin{lemma}\label{lemcr}
    \cite{ddr2} Let $\Omega \in \mathbb{R}^2$ be a bounded domain and $f: \Omega\times \mathbb{R} \rightarrow \mathbb{R}$ be a continuous function, $\beta \in [0,2)$. Then for any sequence $\{u_n\}$ in $L^1(\Omega)$ such that
    \begin{align*}
    u_n\rightarrow u\ \ in\ \ L^1(\Omega),\ \ \frac{f(x,u_n)}{|x|^\beta} \in L^1(\Omega)\ \ \mbox{and}\ \ \int_{\Omega}\frac{|f(x,u_n)u_n|}{|x|^\beta}dx\leq C,
    \end{align*}
    up to a sequence we have
    \begin{align*}
    \frac{f(x,u_n)}{|x|^\beta}\rightarrow \frac{f(x,u)}{|x|^\beta} \ \ \mbox{in}\ \ L^1(\Omega).
    \end{align*}
    \end{lemma}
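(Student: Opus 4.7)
The plan is to prove this via a standard equi-integrability argument, applying Vitali's convergence theorem. The key observation is that the singular weight $|x|^{-\beta}$ with $\beta\in[0,2)$ is integrable on $\Omega\subset\mathbb{R}^2$ (since $\int_{B_R(0)} |x|^{-\beta}\,dx < \infty$ in 2D whenever $\beta<2$), which is exactly what allows the classical de Figueiredo–do \'O–Ruf argument to go through.

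First I would pass to a subsequence, still denoted $\{u_n\}$, such that $u_n\to u$ almost everywhere in $\Omega$; this is a standard consequence of $L^1$-convergence. By the continuity of $f$, the pointwise limit
\[
\frac{f(x,u_n(x))}{|x|^\beta}\longrightarrow \frac{f(x,u(x))}{|x|^\beta}\quad\text{a.e. in } \Omega
\]
follows immediately, and one also checks that $f(x,u)/|x|^\beta\in L^1(\Omega)$ via Fatou's lemma applied to the bound on $\int |f(x,u_n)u_n|/|x|^\beta\,dx$ together with the argument below. The core task is then to upgrade the a.e. convergence to $L^1$-convergence; Vitali's theorem reduces this to proving equi-integrability of the sequence $\{f(x,u_n)/|x|^\beta\}$ in $L^1(\Omega)$.

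To establish equi-integrability, I would split, for any measurable $E\subset\Omega$ and any threshold $M>0$,
\[
\int_E \frac{|f(x,u_n)|}{|x|^\beta}\,dx = \int_{E\cap\{|u_n|\leq M\}} \frac{|f(x,u_n)|}{|x|^\beta}\,dx + \int_{E\cap\{|u_n|>M\}} \frac{|f(x,u_n)|}{|x|^\beta}\,dx.
\]
For the first piece, by continuity of $f$ on the compact set $\overline{\Omega}\times[-M,M]$, there exists $C_M>0$ with $|f(x,s)|\leq C_M$ for $|s|\leq M$, so this piece is bounded by $C_M \int_E |x|^{-\beta}\,dx$, which is absolutely continuous with respect to $|E|$ because $|x|^{-\beta}\in L^1(\Omega)$. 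For the second piece, since $|u_n|>M$ on the domain of integration, one simply writes
\[
\int_{E\cap\{|u_n|>M\}} \frac{|f(x,u_n)|}{|x|^\beta}\,dx \leq \frac{1}{M}\int_{\Omega}\frac{|f(x,u_n)u_n|}{|x|^\beta}\,dx \leq \frac{C}{M},
\]
using the hypothesis uniformly in $n$. Given any $\varepsilon_0>0$, first pick $M$ so large that $C/M<\varepsilon_0/2$, then pick $\delta>0$ so small that $|E|<\delta$ implies $C_M\int_E|x|^{-\beta}\,dx<\varepsilon_0/2$. This yields the required uniform absolute continuity of the integrals.

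Since $\Omega$ has finite measure, the two ingredients (a.e.\ convergence and equi-integrability) are exactly the hypotheses of Vitali's convergence theorem, which delivers $f(x,u_n)/|x|^\beta\to f(x,u)/|x|^\beta$ in $L^1(\Omega)$. The only mildly delicate step is keeping track of the weight $|x|^{-\beta}$, but because $\beta<2$ makes it $L^1$ near the origin, no truncation argument around $x=0$ is necessary and the scheme is essentially identical to the classical unweighted version of the lemma.
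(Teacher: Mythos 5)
Your proposal is correct and follows exactly the classical route for this convergence lemma (the singular-weight analogue of de Figueiredo--Miyagaki--Ruf, Lemma 2.1 of \cite{dmr1}, used again in \cite{ddr2}): extract an a.e.\ convergent subsequence, prove equi-integrability by the Chebyshev split on $\{|u_n|\le M\}$ versus $\{|u_n|>M\}$ together with the hypothesis $\int_\Omega |f(x,u_n)u_n|/|x|^\beta\,dx\le C$, and conclude by Vitali. The paper itself does not reprove the lemma (it cites \cite{ddr2}), but your argument is the one that reference uses, and your observation that $|x|^{-\beta}\in L^1(\Omega)$ for $\beta\in[0,2)$ in the plane is precisely the only adaptation required over the unweighted case.

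One small point of hygiene: the statement assumes $f$ continuous on $\Omega\times\mathbb{R}$, not on $\overline{\Omega}\times\mathbb{R}$, so the step ``by continuity of $f$ on the compact set $\overline{\Omega}\times[-M,M]$'' is not literally available. Some uniform-in-$x$ bound on bounded strips, $\sup_{x\in\Omega,\,|s|\le M}|f(x,s)|<\infty$, is genuinely needed to make the $\{|u_n|\le M\}$ piece equi-integrable, and it is implicit both in the cited reference and in every application in this paper, where the nonlinearities satisfy $|f_i(x,U)|\le C e^{\alpha|U|^2}$ uniformly in $x\in\Omega$. You should either state that auxiliary bound explicitly as part of the hypotheses, or note that it is automatic under the growth conditions on $f_i$ being assumed throughout the paper; apart from this, the proof is complete.
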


    \begin{lemma}\label{lemdr}
    \cite{dms} Let $\{u_n\}$ be a sequence of functions in $H^1_0(\Omega)$ strongly convergent. Then there exists a subsequence $\{u_{n_k}\}$ of $\{u_n\}$ and $g\in H^1_0(\Omega)$ such that $|u_{n_k}(x)|\leq g(x)$ almost everywhere in $\Omega$.
    \end{lemma}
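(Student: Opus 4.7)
The plan is to mimic the classical proof that convergence in $L^p$ implies pointwise domination along a subsequence, adapted so that the dominating function lives in $H^1_0(\Omega)$ rather than merely in $L^p$. The key input is that the map $v\mapsto |v|$ sends $H^1_0(\Omega)$ into itself with $\||v|\|\le\|v\|$, so that absolute values do not cost us regularity.

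Let $u$ denote the strong $H^1_0$-limit of $\{u_n\}$. First I would extract a subsequence $\{u_{n_k}\}$ with rapid convergence, say $\|u_{n_k}-u\|\le 2^{-k}$. Then define the telescoping dominator
\[
g_N(x):=|u(x)|+\sum_{k=1}^{N}|u_{n_k}(x)-u(x)|,\qquad g(x):=|u(x)|+\sum_{k=1}^{\infty}|u_{n_k}(x)-u(x)|.
\]
Each summand lies in $H^1_0(\Omega)$, and since $\sum_k\||u_{n_k}-u|\|\le\sum_k 2^{-k}<\infty$, the partial sums $\{g_N\}$ form a Cauchy sequence in $H^1_0(\Omega)$. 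Completeness gives a limit $\tilde g\in H^1_0(\Omega)$ with $g_N\to\tilde g$ in $H^1_0$.

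Next I would identify $\tilde g$ with the pointwise-a.e.\ function $g$. By the compact embedding $H^1_0(\Omega)\hookrightarrow L^2(\Omega)$ we have $g_N\to\tilde g$ in $L^2$, so a further subsequence converges a.e.\ to $\tilde g$; but the sequence $\{g_N(x)\}$ is nondecreasing in $N$ at every $x$, so the full sequence converges a.e.\ to the pointwise supremum $g(x)$, forcing $g=\tilde g$ a.e. In particular $g\in H^1_0(\Omega)$. Finally, for every $k$ and a.e.\ $x\in\Omega$,
\[
|u_{n_k}(x)|\le |u(x)|+|u_{n_k}(x)-u(x)|\le g_k(x)\le g(x),
\]
which is the desired domination.

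The only subtle point is the identification of $\tilde g$ as the pointwise limit of the monotone sequence $g_N$; everything else is bookkeeping. I expect this step to be the main (and only) obstacle, and it is handled cleanly by combining the a.e.\ convergence of an $L^2$-subsequence with the monotonicity of $\{g_N(x)\}_N$.
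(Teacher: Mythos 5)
The paper does not prove this lemma; it cites it from \cite{dms}, where it appears with essentially the proof you give. Your argument is correct and is the standard one: rapidly convergent subsequence, telescoping majorant built from absolute values (using that $v\mapsto|v|$ preserves $H^1_0$ with $\||v|\|=\|v\|$), Cauchy in $H^1_0$ by the triangle inequality, and identification of the $H^1_0$-limit with the pointwise monotone limit via a.e.\ convergence of an $L^2$-subsequence.
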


    \begin{lemma}\label{lem4.4}
    Assume $(M_1),\ (M_3),\ (F_2),\ (F_3)$ hold and $f_i$ has subcritical (or critical) growth at $\infty$. Let $\{U_n\}\subset H^1_0(\Omega,\mathbb{R}^k)$ be the Palais-Smale sequence for functional $I_\varepsilon$ at finite level, then there exists $U\in H^1_0(\Omega,\mathbb{R}^k)$ such that
    \begin{equation}\label{dcfi}
    \frac{f_i(x,U_n)}{|x|^\beta}\rightarrow \frac{f_i(x,U)}{|x|^\beta}\ \ \mbox{in}\ \ L^1(\Omega), \ \ \mbox{for all}\ \ i=1,\ldots,k.
    \end{equation}
    and
    \begin{equation}\label{dch}
    U_n\cdot H\rightarrow U\cdot H\ \ \mbox{in}\ \ L^1(\Omega).
    \end{equation}
    Moreover,
    \begin{equation}\label{4.5}
    \frac{F(x,U_n)}{|x|^\beta}\rightarrow \frac{F(x,U)}{|x|^\beta}\ \ \mbox{in}\ \ L^1(\Omega).
    \end{equation}
    \end{lemma}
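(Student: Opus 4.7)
The plan is to extract a weakly/strongly convergent subsequence from $\{U_n\}$ using the boundedness in Lemma~\ref{lem4.1}, and then to pass to the limit in each of the three expressions by invoking Lemma~\ref{lemcr}, Lemma~\ref{lemdr}, and the a priori integrability bounds already provided by Lemma~\ref{lem4.1}.

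First, by $\|U_n\|\le C$, I pass to a subsequence (not relabeled) with $U_n\rightharpoonup U$ weakly in $H^1_0(\Omega,\mathbb{R}^k)$, $U_n\to U$ strongly in $L^p(\Omega,\mathbb{R}^k)$ for every $p\in[1,\infty)$ by the compact Sobolev embedding recalled in Section~\ref{sec preliminaries}, $U_n\to U$ a.e.\ on $\Omega$, and, after applying Lemma~\ref{lemdr} componentwise and refining once more, $|u_{n,i}(x)|\le g_i(x)$ a.e.\ with $g_i\in H^1_0(\Omega)$. The convergence (\ref{dch}) follows at once: each $h_i\in (H^1_0(\Omega))^*$ embeds into $L^r(\Omega)$ for some $r>1$ (since in dimension two $H^1_0\hookrightarrow L^{r'}$ for every $r'<\infty$), and H\"older's inequality combined with the strong $L^{r/(r-1)}$-convergence of $U_n$ yields $U_n\cdot H\to U\cdot H$ in $L^1(\Omega)$.

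For (\ref{dcfi}) I would apply the natural vector-valued version of Lemma~\ref{lemcr} separately to each $f_i$. The hypotheses to verify are: (a) $U_n\to U$ in $L^1(\Omega,\mathbb{R}^k)$ (from the previous step); (b) $f_i(x,U_n)/|x|^\beta\in L^1(\Omega)$ (from the exponential growth $|f_i(x,U)|\le Ce^{\alpha|U|^2}$, the boundedness of $\|U_n\|$, and Lemma~\ref{lemtm4}); (c) the uniform integral bound $\int_\Omega |f_i(x,U_n)\,U_n|/|x|^\beta\,dx\le C$. Item (c) is the delicate one: $(F_3)$ guarantees $U_n\cdot\nabla F(x,U_n)=\sum_j u_{n,j}f_j(x,U_n)\ge\mu F(x,U_n)>0$, so Lemma~\ref{lem4.1} gives $\int U_n\cdot\nabla F/|x|^\beta\,dx\le C$; and $(F_2)$, namely $F\le M_0|\nabla F|$ for $|U|\ge S_0$, is what lets one dominate each $|f_i(x,U_n)|$ on the tail $\{|U_n|>R\}$ by a combination of $F/|x|^\beta$ and $U_n\cdot\nabla F/|x|^\beta$, up to a factor $1/R$. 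The conclusion of the (vector) convergence lemma then delivers (\ref{dcfi}).

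Finally, (\ref{4.5}) follows either by a direct Vitali-type argument for $F(x,U_n)/|x|^\beta$ (using the pointwise convergence $F(x,U_n)\to F(x,U)$, the pointwise domination inherited from Lemma~\ref{lemdr}, and the $L^1$-bound on $F$ from Lemma~\ref{lem4.1}), or as an immediate consequence of (\ref{dcfi}) combined with $F\le (1/\mu)\,U\cdot\nabla F$ from $(F_3)$. The main obstacle is the verification of hypothesis (c) in the step above: in the scalar case the clean pointwise estimate $|f(u)|\le |f(u)u|/R$ on $\{|u|>R\}$ drives the uniform tail bound, but in the vector setting no such direct estimate is available, and one must interleave $(F_2)$ and $(F_3)$ carefully so that the non-negative quantity $U_n\cdot\nabla F(x,U_n)$ and the $L^1$-bounded $F(x,U_n)$ together dominate each $|f_i(x,U_n)|$ on $\{|U_n|>R\}$ with a uniform $1/R$ factor. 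This delicate interplay between the two hypotheses is precisely what makes the lemma work.
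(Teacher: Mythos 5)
Your overall plan tracks the paper's: pass to a weakly convergent subsequence, apply the Figueiredo--do \'O--Ruf convergence lemma to get (\ref{dcfi}), handle (\ref{dch}) directly, and then obtain (\ref{4.5}) by a domination argument. However, the domination step is mangled in a way that breaks the proof. You invoke Lemma~\ref{lemdr} \emph{componentwise on $U_n$} to get $|u_{n,i}|\le g_i$ with $g_i\in H^1_0(\Omega)$. That lemma requires the sequence to converge \emph{strongly} in $H^1_0(\Omega)$, but here $U_n$ only converges weakly in $H^1_0(\Omega,\mathbb{R}^k)$ (strong convergence is precisely what one is ultimately trying to prove in the later compactness lemmas), so this application is invalid. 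Moreover, even a bound $|u_{n,i}|\le g_i$ with $g_i\in L^p$ or $H^1_0$ would not dominate the exponentially growing $|f_i(x,U_n)|$ in $L^1$ uniformly in $n$, so this is not a route to the generalized dominated convergence theorem.

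The paper instead applies the domination lemma \emph{after} (\ref{dcfi}) is established, to the sequence $\tfrac{f_i(x,U_n)}{|x|^\beta}$, which by (\ref{dcfi}) converges strongly in $L^1(\Omega)$; the $L^1$-variant of Lemma~\ref{lemdr} then yields $g_i\in L^1(\Omega)$ with $\tfrac{|f_i(x,U_n)|}{|x|^\beta}\le g_i$ a.e. Combined with $(F_2)$, which gives $|F(x,U_n)|\le\sup_{\Omega\times[-S_0,S_0]}|F|+M_0|\nabla F(x,U_n)|$, one obtains an $L^1$-majorant for $\tfrac{F(x,U_n)}{|x|^\beta}$ and concludes by generalized dominated convergence. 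Note also that your reading of $(F_2)$ is backwards: $(F_2)$ controls $F$ by $|\nabla F|$, not $|f_i|$ by anything; it cannot be ``interleaved with $(F_3)$'' to dominate $|f_i(x,U_n)|$ on $\{|U_n|>R\}$. Your alternative route to (\ref{4.5}) via ``$F\le (1/\mu)U\cdot\nabla F$'' alone is also insufficient: $L^1$-convergence of each $f_i(\cdot,U_n)/|x|^\beta$ does not by itself give $L^1$-convergence of $U_n\cdot\nabla F(\cdot,U_n)/|x|^\beta$, so the majorant must come from the domination lemma as above.
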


    \begin{proof}
    According to Lemma \ref{lem4.1}, we know that $\{U_n\}$ is bounded in $H^1_0(\Omega,\mathbb{R}^k)$, then up to a subsequence, for some $U \in H^1_0(\Omega,\mathbb{R}^k)$ such that $U_n\rightharpoonup U$ weakly in $H^1_0(\Omega,\mathbb{R}^k)$,\ $U_n\rightarrow U$ in $L^p(\Omega,\mathbb{R}^k)$ for all $p\geq 1$ and $U_n(x)\rightarrow U(x)$ almost everywhere in $\Omega$. Consequently, by Lemmas \ref{lem4.1} and \ref{lemcr}, we have
    \begin{equation*}
    \frac{f_i(x,U_n)}{|x|^\beta}\rightarrow \frac{f_i(x,U)}{|x|^\beta}\ \ \mbox{in}\ \ L^1(\Omega), \ \ \mbox{for all}\ \ i=1,\ldots,k.
    \end{equation*}
    Since
    \begin{align*}
    \int_{\Omega}|U_n\cdot H-U\cdot H|dx\leq \int_{\Omega}|H| |U_n -U|dx\leq \|H\|_2\|U_n-U\|_2 \rightarrow 0,
    \end{align*}
    we can conclude that
    \begin{equation*}
    U_n\cdot H\rightarrow U\cdot H\ \ \mbox{in}\ \ L^1(\Omega).
    \end{equation*}
    Then by Lemma \ref{lemdr}, there exists $g_i\in L^1(\Omega)$ such that $\frac{|f_i(x,U_n)|}{|x|^\beta}\leq g_i$ almost everywhere in $\Omega$. From $(F_2)$ we can conclude that
    \begin{align*}
    |F(x,U_n)|\leq \sup_{\Omega \times [-S_0,S_0]} |F(x,U_n(x))|+M_0|\nabla F(x,U_n)| \ \ \mbox{a.e. in}\ \ \Omega.
    \end{align*}
    Thus, by the generalized Lebesgue dominated convergence theorem, we get
    \begin{equation*}
    \frac{F(x,U_n)}{|x|^\beta}\rightarrow \frac{F(x,U)}{|x|^\beta}\ \ \mbox{in}\ \ L^1(\Omega).
    \end{equation*}
    \end{proof}

\section{{\bfseries Proof of the main results}}\label{main}

    In order to obtain a weak solution with positive energy, according to Lemmas \ref{lemgc1} and \ref{lemgc2}, let
    \begin{equation}\label{defmpl}
    c_{M,\varepsilon}=\inf_{\gamma\in\Upsilon}\max_{t\in [0,1]}I_\varepsilon(\gamma(t))>0,
    \end{equation}
    be the minimax level of $I_\varepsilon$, where $\Upsilon=\{\gamma\in C\big([0,1],H^1_0(\Omega,\mathbb{R}^k)\big):\gamma(0)=\mathbf{0}, I_\varepsilon(\gamma(1))<0\}$. Therefore, using the Mountain-pass theorem, there exists a sequence $\{U_n\} \subset H^1_0(\Omega,\mathbb{R}^k)$ satisfying
    \begin{equation}\label{5.1}
    I_\varepsilon(U_n)\rightarrow c_{M,\varepsilon}\ \ \mbox{and}\ \ I'_\varepsilon(U_n)\rightarrow 0.
    \end{equation}
    And in order to obtain another weak solution with negative energy, by Lemmas \ref{lemgc1} and \ref{lemgc3}, we take $\eta_\varepsilon\leq \rho_\varepsilon$ and so we have that
    \begin{equation}\label{defc0}
    -\infty<c_{0,\varepsilon}:=\inf_{\|V\|\leq \rho_\varepsilon}I_\varepsilon(V)<0,
    \end{equation}
    where $\rho_\varepsilon$ is given as in Lemma \ref{lemgc1}. Since $\overline{B}_{\rho_\varepsilon}$ is a complete metric space with the metric given by the norm of $H^1_0(\Omega,\mathbb{R}^k)$, convex and the functional $I_\varepsilon$ is of class $C^1$ and bounded below on $\overline{B}_{\rho_\varepsilon}$, by the Ekeland variational principle, there exists a sequence $\{V_n\}$ in $\overline{B}_{\rho_\varepsilon}$ such that
    \begin{equation}\label{5.2}
    I_\varepsilon(V_n)\rightarrow c_{0,\varepsilon}\ \ \mbox{and}\ \ I'_\varepsilon(V_n)\rightarrow 0.
    \end{equation}

\medskip


\subsection{Subcritical case: Proof of Theorem \ref{thm1.2}}\label{ssectpfthmsc}

    In this subsection, we assume that $f_i$ has subcritical growth at $\infty$ satisfying (\ref{1.2}) and proof Theorem \ref{thm1.2}.

    \begin{lemma}\label{lem5.1}
    The functional $I_\varepsilon$ satisfies the Palais-Smale condition at any finite level $c$.
    \end{lemma}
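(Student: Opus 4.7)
The plan is to show that any Palais--Smale sequence $\{U_n\}\subset H^1_0(\Omega,\mathbb{R}^k)$ for $I_\varepsilon$ at a finite level $c$ admits a strongly convergent subsequence. Lemma \ref{lem4.1} first provides that $\{U_n\}$ is bounded, so after extracting a subsequence we may assume $U_n\rightharpoonup U$ weakly in $H^1_0(\Omega,\mathbb{R}^k)$, $U_n\to U$ strongly in $L^p(\Omega,\mathbb{R}^k)$ for every $p\in[1,\infty)$, $U_n(x)\to U(x)$ almost everywhere in $\Omega$, and $\|U_n\|^2\to t_0$ for some $t_0\geq 0$. By continuity of $m$ together with $(M_1)$ we get $m(\|U_n\|^2)\to m(t_0)\geq m_0>0$.

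The crux of the argument is to prove
\begin{equation*}
\int_{\Omega}\frac{(U_n-U)\cdot\nabla F(x,U_n)}{|x|^\beta}\,dx\to 0.
\end{equation*}
This is where the subcritical hypothesis (\ref{1.2}) is decisive: for any $\alpha>0$ there exists $C_\alpha>0$ with $|\nabla F(x,U_n)|\leq C_\alpha e^{\alpha|U_n|^2}$. Since $\|U_n\|\leq M$ for some $M>0$, we can pick $q>1$ with $q\beta/2<1$ and then $\alpha>0$ so small that $q\alpha M^2/(4\pi)+q\beta/2\leq 1$; rescaling $U_n=\|U_n\|\,(U_n/\|U_n\|)$ and applying Lemma \ref{lemtm2} to $U_n/\|U_n\|$ gives
\begin{equation*}
\int_{\Omega}\frac{e^{q\alpha|U_n|^2}}{|x|^{q\beta}}\,dx\leq C(\Omega)\qquad\text{uniformly in }n.
\end{equation*}
H\"older's inequality with exponents $q$ and $q'=q/(q-1)$, combined with the compact embedding $H^1_0(\Omega,\mathbb{R}^k)\hookrightarrow L^{q'}(\Omega,\mathbb{R}^k)$, then yields
\begin{equation*}
\Bigl|\int_{\Omega}\frac{(U_n-U)\cdot\nabla F(x,U_n)}{|x|^\beta}\,dx\Bigr|\leq C_\alpha\,C(\Omega)^{1/q}\,\|U_n-U\|_{L^{q'}(\Omega,\mathbb{R}^k)}\longrightarrow 0.
\end{equation*}

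The conclusion is then standard. Testing $\langle I_\varepsilon'(U_n),U_n-U\rangle_*\to 0$ produces
\begin{equation*}
m(\|U_n\|^2)\langle U_n,U_n-U\rangle=\int_{\Omega}\frac{(U_n-U)\cdot\nabla F(x,U_n)}{|x|^\beta}\,dx+\varepsilon\langle H,U_n-U\rangle_*+o(1),
\end{equation*}
where the $H$-term vanishes because $U_n\rightharpoonup U$ in $H^1_0(\Omega,\mathbb{R}^k)$ and $H$ lies in its dual. Since $\langle U_n,U_n-U\rangle=\|U_n\|^2-\langle U_n,U\rangle\to t_0-\|U\|^2$ and $m(t_0)\geq m_0>0$, we are forced to conclude $t_0=\|U\|^2$. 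Combining $\|U_n\|\to\|U\|$ with weak convergence in the Hilbert space $H^1_0(\Omega,\mathbb{R}^k)$ gives the strong convergence $U_n\to U$, finishing the verification of the Palais--Smale condition. The only nontrivial step is controlling $\nabla F(x,U_n)/|x|^\beta$ in a uniform $L^q$-norm; the subcritical hypothesis makes this effortless by letting us choose the Trudinger--Moser exponent $\alpha$ as small as we please, a flexibility that will be lost in the critical case and that will force a substantially subtler analysis there.
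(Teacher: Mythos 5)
Your proof is correct and follows essentially the same line as the paper: boundedness from Lemma \ref{lem4.1}, testing $I_\varepsilon'(U_n)$ against $U_n-U$, killing the nonlinear term by H\"older and the singular Trudinger--Moser inequality using the subcritical smallness of $\alpha$, and then invoking $(M_1)$ to pass from $m(\|U_n\|^2)\langle U_n,U_n-U\rangle\to 0$ to norm convergence and hence strong convergence. The only cosmetic difference is that you apply the vector inequality (Lemma \ref{lemtm2}) directly to $U_n/\|U_n\|$, whereas the paper first decomposes the exponential via the $k$-fold Young inequality (Lemma \ref{yi}) and then applies the scalar Lemma \ref{lemtm1} componentwise; both give the same uniform bound.
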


    \begin{proof}
    Let $\{U_n\} \subset H^1_0(\Omega,\mathbb{R}^k)$ be a sequence such that $I_\varepsilon(U_n)\rightarrow c$ and $I_\varepsilon'(U_n)\rightarrow 0$. Lemma \ref{lem4.1} shows that $\{U_n\}$ is bounded in $H^1_0(\Omega,\mathbb{R}^k)$, then we can get a subsequence still labeled by $\{U_n\}$, for some $U\in H^1_0(\Omega,\mathbb{R}^k)$ such that
    \begin{align*}
   U_n\rightharpoonup U\ \ \mbox{in}\ \ H^1_0(\Omega,\mathbb{R}^k);\quad U_n \rightarrow U\ \ \mbox{in}\ \ L^q(\Omega,\mathbb{R}^k)   \ \mbox{for all}\ \ q\geq 1.
    \end{align*}
    Since
    \begin{equation}\label{5.3}
    \begin{split}
    \langle I_\varepsilon'(U_n),U_n-U\rangle_*=&m(\|U_n\|^2)\langle U_n,U_n-U\rangle-\int_{\Omega}\frac{(U_n-U)\cdot\nabla F(x,U_n)}{|x|^\beta}dx
    -\varepsilon\int_{\Omega}(U_n-U_0)\cdot H dx.
    \end{split}
    \end{equation}
    From $I_\varepsilon'(U_n)\rightarrow 0$ in $\big(H^1_0(\Omega,\mathbb{R}^k)\big)_*$, we have $\langle I'_\varepsilon(U_n),U_n-U\rangle_* \rightarrow 0$.
    Meanwhile, Lemma \ref{lem4.1} shows that $\|U_n\|$ is bounded, i.e. $\|U_n\|^2\leq C_0$ for some $C_0>0$, then by subscritical condition, the H\"{o}lder's inequality and Lemma \ref{yi}, it follows that
    \begin{align*}
    \begin{split}
    \left|\int_{\Omega}\frac{(U_n-U)\cdot\nabla F(x,U_n)}{|x|^\beta}dx \right| & \leq \int_{\Omega}\frac{|U_n-U| |\nabla F(x,U_n)|}{|x|^\beta}dx \\
    &\leq C_1\int_{\Omega}|U_n-U|\frac{e^{\alpha |U_n|^2}}{|x|^\beta}dx \\
    &\leq C_2\|U_n-U\|_{\frac {r}{r-1}}\left(\int_{\Omega}\frac{e^{r\alpha |U_n|^2}}{|x|^{r\beta}}dx\right)^\frac {1}{r} \\
    &\leq C_2\|U_n-U\|_{\frac {r}{r-1}}\left(\sum^k_{i=1}\int_{\Omega}\frac{e^{kr\alpha\|U_n\|^2\left(\frac{u^i_n}{\|U_n\|}\right)^2}}{|x|^{r\beta}}dx\right)^\frac {1}{r}
    \end{split}
    \end{align*}
    for some $C_1,C_2>0$, where $\alpha=\frac{4\pi(1-r\beta/2)}{krC_0}$ and $r>1$ sufficiently close to 1 such that $r\beta<2$. Since $\frac{kr\alpha\|U_n\|^2}{4\pi}+\frac{r\beta}{2}\leq 1$, then by Lemma \ref{lemtm1} and $H^1_0(\Omega,\mathbb{R}^k)\hookrightarrow L^s(\Omega,\mathbb{R}^k)$ is compact for $s\geq 1$, the fourth term in the last inequality converges to zero.
    Thus from (\ref{dch}), in (\ref{5.3}), it must be that
    \begin{align*}
    m(\|U_n\|^2)\langle U_n,U_n-U\rangle \rightarrow 0\ \ \mbox{as}\ \ n\rightarrow+\infty.
    \end{align*}
    Because $(M_1)$: $m(t)\geq m_0>0$ for $t\geq 0$ and $U_n\rightharpoonup U$ in $H^1_0(\Omega,\mathbb{R}^k)$, it must be that
    \begin{align*}
    \langle U_n,U_n-U\rangle\rightarrow 0,
    \end{align*}
    which means $\|U_n\|^2\rightarrow \|U\|^2$. By Radon's Theorem, $U_n\rightarrow U$ strongly in $H^1_0(\Omega,\mathbb{R}^k)$. This proof is complete.
    \end{proof}

\noindent{\bfseries Proof of Theorem \ref{thm1.2}.}
    By (\ref{5.1}), (\ref{5.2}) and Lemma \ref{lem5.1}, there exists $\varepsilon_{sc}>0$ such that for each $0<\varepsilon<\varepsilon_{sc}$, using Minimax principle, there exist critical points $U_{M,\varepsilon}$ for $I_\varepsilon$ at level $c_{M,\varepsilon}$ and $V_{0,\varepsilon}$ for $I_\varepsilon$ at level $c_{0,\varepsilon}$. We claim that $U_{M,\varepsilon}\neq \mathbf{0}$. In fact, suppose by contradiction that $U_{M,\varepsilon}\equiv \mathbf{0}$. We can know that $0<c_{M,\varepsilon}=\lim_{n\rightarrow\infty}I_\varepsilon(U_n)=I_\varepsilon(U_{M,\varepsilon})=I(\mathbf{0})=0$, what is absurd. Similarly, we have $V_{0,\varepsilon}\neq \mathbf{0}$. In the end, we claim $U_{M,\varepsilon}\neq V_{0,\varepsilon}$. Suppose by contradiction that $ U_{M,\varepsilon}\equiv V_{0,\varepsilon}$, then $0>c_{0,\varepsilon}=\lim_{n\rightarrow\infty}I_\varepsilon(V_n)=I(V_{0,\varepsilon})=I(U_{M,\varepsilon})=\lim_{n\rightarrow\infty}I_\varepsilon(U_n)=c_{M,\varepsilon}>0$, what is absurd. Thus, the proof of Theorem \ref{thm1.2} is complete.
    \qed

\medskip


\subsection{Critical case: Proof of Theorem \ref{thm1.3}}\label{ssectpfthmc}

    In this subsection, we assume that $f_i$ has critical growth at $\infty$ satisfying (\ref{1.3}) and give the proof Theorem \ref{thm1.3}.

    Firstly, we give a conclusion that functional $I_\varepsilon$ satisfies the Palais-Smale condition if Palais-Smale sequence less than appropriate level:

    \begin{lemma}\label{ms1}
    If $\{V_n\}$ is a Palais-Smale sequence for $I_\varepsilon$ at any finite level with
    \begin{align}\label{msVx}
    \liminf_{n\rightarrow\infty}\|V_n\|^2<\frac{2\pi(2-\beta)}{\alpha_0},
    \end{align}
    then $\{V_n\}$ possesses a strongly subsequence.
    \end{lemma}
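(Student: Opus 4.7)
The plan is to follow the standard strong-convergence-from-Palais-Smale scheme, where the subcritical Trudinger--Moser bound (Lemma \ref{lemtm2}) plays the role that the classical Moser inequality plays in the scalar setting, and where the $\liminf$ hypothesis on $\|V_n\|^2$ is precisely what guarantees that we stay below the threshold for that inequality after multiplication by an exponent $r>1$.

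First I would invoke Lemma \ref{lem4.1} to conclude that $\{V_n\}$ is bounded in $H^1_0(\Omega,\mathbb{R}^k)$. Passing to a subsequence, I may assume $V_n\rightharpoonup V$ weakly in $H^1_0(\Omega,\mathbb{R}^k)$, $V_n\to V$ in $L^p(\Omega,\mathbb{R}^k)$ for every $p\geq 1$, and pointwise a.e. Using the $\liminf$ assumption, I would extract a further subsequence (still denoted $\{V_n\}$) and fix $\rho$ with
\[
\|V_n\|^2 \le \rho < \frac{2\pi(2-\beta)}{\alpha_0}\qquad\text{for all large }n.
\]

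The heart of the argument is to show that
\[
\int_{\Omega}\frac{(V_n-V)\cdot \nabla F(x,V_n)}{|x|^\beta}\,dx \longrightarrow 0.
\]
Using the critical growth of each $f_i$, I would pick $\alpha>\alpha_0$ so close to $\alpha_0$ that $\alpha \rho < 2\pi(2-\beta)$, which by continuity also allows me to choose $r>1$ close to $1$ with $r\beta<2$ and
\[
\frac{r\alpha\rho}{4\pi}+\frac{r\beta}{2}\le 1.
\]
For such $\alpha$ there is $C>0$ with $|\nabla F(x,V_n)|\le C e^{\alpha|V_n|^2}$. Hölder's inequality together with Lemma \ref{lemtm2} applied to $V_n/\|V_n\|$ (exactly as in the proof of Lemma \ref{lem5.1}) then yields
\[
\left|\int_{\Omega}\frac{(V_n-V)\cdot\nabla F(x,V_n)}{|x|^\beta}\,dx\right|
\le C\,\|V_n-V\|_{r/(r-1)}\left(\int_{\Omega}\frac{e^{r\alpha|V_n|^2}}{|x|^{r\beta}}\,dx\right)^{1/r}
\le C'\|V_n-V\|_{r/(r-1)},
\]
which tends to zero by the strong $L^{r/(r-1)}$-convergence.

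To finish, I would plug $V_n-V$ into the Palais--Smale identity, getting
\[
\langle I'_\varepsilon(V_n), V_n-V\rangle_{*}
= m(\|V_n\|^2)\langle V_n, V_n-V\rangle
- \int_{\Omega}\frac{(V_n-V)\cdot\nabla F(x,V_n)}{|x|^\beta}\,dx
- \varepsilon\int_{\Omega}(V_n-V)\cdot H\,dx,
\]
where the left-hand side tends to $0$ (since $\{V_n-V\}$ is bounded and $I'_\varepsilon(V_n)\to 0$), the nonlinear term was just handled, and the nonhomogeneous term tends to $0$ as in Lemma \ref{lem4.4} (Cauchy--Schwarz in the duality pairing together with $V_n\to V$ in $L^2$). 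Thus $m(\|V_n\|^2)\langle V_n, V_n-V\rangle\to 0$, and since $m(t)\ge m_0>0$ by $(M_1)$, we deduce $\langle V_n, V_n-V\rangle\to 0$, i.e.\ $\|V_n\|\to\|V\|$. Combined with weak convergence in the Hilbert space $H^1_0(\Omega,\mathbb{R}^k)$, this gives strong convergence. The one delicate point—and the only real obstacle—is the simultaneous choice of $\alpha>\alpha_0$ and $r>1$ satisfying the borderline Trudinger--Moser condition; the $\liminf$ bound $\|V_n\|^2<2\pi(2-\beta)/\alpha_0$ is tailor-made to make this choice possible.
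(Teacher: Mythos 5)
Your argument is correct and follows essentially the same route as the paper's: both extract a subsequence along which the norms stay below the threshold $2\pi(2-\beta)/\alpha_0$, choose $\alpha>\alpha_0$ and $r>1$ close enough to $\alpha_0$ and $1$ so that the singular Trudinger--Moser bound applies, show via H\"older that $\int_\Omega |x|^{-\beta}(V_n-V)\cdot\nabla F(x,V_n)\,dx\to 0$, and feed this into $\langle I'_\varepsilon(V_n),V_n-V\rangle_*\to 0$ together with $(M_1)$ to conclude norm convergence and hence strong convergence. The only cosmetic difference is that the paper phrases the last step through the Br\'ezis--Lieb decomposition $V_n=V+W_n$ and shows $\|W_n\|\to 0$, whereas you work directly with $\langle V_n,V_n-V\rangle\to 0$; the two are the same modulo $o(1)$.
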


    \begin{proof}
    Let $\{V_n\}\subset H^1_0(\Omega,\mathbb{R}^k)$ such that $I_\varepsilon(V_n)\rightarrow c$ and $I'_\varepsilon(V_n)\rightarrow 0$ in $\big(H^1_0(\Omega,\mathbb{R}^k)\big)^*$. By Lemma \ref{lem4.1}, $\|V_n\|\leq C$ for some $C>0$, thus, up to a sequence, for some $V\in H^1_0(\Omega,\mathbb{R}^k)$
    \begin{align*}
    \begin{split}
    &V_n\rightharpoonup V\ \ \mbox{in}\ \ H^1_0(\Omega,\mathbb{R}^k).
    \end{split}
    \end{align*}
    Taking $V_n=V+W_n$, it follows that $W_n\rightharpoonup 0$ in $H^1_0(\Omega,\mathbb{R}^k)$ and by the Br\'{e}zis-Lieb Lemma (see \cite{bl2}), we get
    \begin{align}\label{ws1}
    \begin{split}
    \|V_n\|^2=\|V\|^2+\|W_n\|^2+o_n(1).
    \end{split}
    \end{align}
    By $V_n\rightharpoonup V$ which means $\langle V_n,V \rangle\rightarrow \langle V,V \rangle=\|V\|^2$. Therefore, (\ref{ws1}) can be replaced by
    \begin{align}\label{ws2}
    \begin{split}
    \|V_n\|^2=\langle V_n,V\rangle+\|W_n\|^2+o_n(1).
    \end{split}
    \end{align}
    By $I_\varepsilon'(V_n)\rightarrow 0$ in $\big(H^1_0(\Omega,\mathbb{R}^k)\big)^*$ and (\ref{dch}), (\ref{ws2}), we can get
    \begin{align*}
    \begin{split}
    \langle I_\varepsilon'(V_n),V_n-V\rangle_* &=m(\|V_n\|^2)\langle V_n,V_n-V \rangle-\int_{\Omega}\frac{(V_n-V)\cdot\nabla F(x,V_n)}{|x|^\beta}+o_n(1) \\
    &=m(\|V_n\|^2)\|W_n\|^2-\int_{\Omega}\frac{W_n\cdot\nabla F(x,V_n)}{|x|^\beta}+o_n(1),
    \end{split}
    \end{align*}
    that is,
    \begin{align}\label{ws3}
    \begin{split}
    m(\|V_n\|^2)\|W_n\|^2=\int_{\Omega}\frac{W_n\cdot\nabla F(x,V_n)}{|x|^\beta}+o_n(1).
    \end{split}
    \end{align}
    From (\ref{msVx}), there exists $\zeta>0$ such that $\alpha_0\|V_n\|^2<\zeta<2\pi(2-\beta)$ for $n$ sufficiently large and also, there exist $\alpha>\alpha_0$ close to $\alpha_0$ and $q>q$ close to~1~such that $q\alpha \|V_n\|^2<\zeta<2\pi(2-q\beta)$  for $n$ sufficiently large. Then by (\ref{3.1}), we have
    \begin{align*}
    \begin{split}
    \left|\int_{\Omega}\frac{W_n\cdot\nabla F(x,V_n)}{|x|^\beta}\right|\leq C_1\int_{\Omega}|W_n|\frac{e^{\alpha|V_n|^2}}{|x|^\beta},
    \end{split}
    \end{align*}
    and by the H\"{o}lder's inequality and  Lemma \ref{lemtm2}, we can get that
    \begin{align*}
    \begin{split}
    \int_{\Omega}|W_n|\frac{e^{\alpha|V_n|^2}}{|x|^\beta}\leq C_1\|W_n\|_s \bigg(\int_{\Omega}\frac{e^{q\alpha|V_n|^2}}{|x|^{q\beta}}\bigg)^{1/r}\leq C_2\|W_n\|_s,
    \end{split}
    \end{align*}
    where $s=\frac{q}{q-1}$. By the compact embedding $H^1_0(\Omega,\mathbb{R}^k)\hookrightarrow L^s(\Omega,\mathbb{R}^k)$ for $s\geq 1$, we conclude that
    \begin{align*}
    \begin{split}
    \int_{\Omega}\frac{W_n\cdot\nabla F(x,V_n)}{|x|^\beta}\rightarrow 0.
    \end{split}
    \end{align*}
    Thus, this together with (\ref{ws3}) and $(M_1)$,  we get that $\|W_n\|\rightarrow 0$ and the result follows.
    \end{proof}

    Then, in order to get a more precise information about the minimax level $c_{M,\varepsilon}$, let us consider the following sequence which was introduced in \cite{ddr}: for $n\in \mathbb{N}$ set $\delta_n=\frac{2\log n}{n}$, and let
    \begin{eqnarray*}
    y_n(t)=\frac {1}{\sqrt{2\pi}}
    \left\{ \arraycolsep=1.5pt
       \begin{array}{ll}
        \frac {t}{n^{1/2}}(1-\delta_n)^{1/2},\ \ \ \ \ \ \ \ \ \ \ \ \ \ \ \ \ \ \ \ \ \ \ \ \ \ \ \ \ \ \ \ \ \ \ \ \ \ \ \ \ &{\rm if}\ \ 0\leq t\leq n,\\[2mm]
        \frac {1}{\big [n(1-\delta_n)\big ]^{1/2}}\log\frac{A_n+1}{A_n+e^{-(t-n)}}+\big [n(1-\delta_n)\big ]^{1/2},\ \ \ &{\rm if}\ \ \ t\geq n,\\[2mm]
        \end{array}
    \right.
    \end{eqnarray*}
    where $A_n$ is defined as $A_n=\frac{1}{en^2}+O(\frac{1}{n^4})$.
    The sequence of function $\{y_n\}$ satisfies the following properties:
    \begin{eqnarray*}
    \left\{ \arraycolsep=1.5pt
       \begin{array}{ll}
        \{y_n\}\subset C\big([0,+\infty)\big),\ \mbox{piecewise differentiable, with}\ y_n(0)=0 \ \mbox{and}\  y'_n(t)\geq 0;\\[2mm]
        \int^{+\infty}_0 |y'_n(t)|^2=1;\\[2mm]
        \lim_{n\rightarrow \infty}\int^{+\infty}_0 e^{y^2_n(t)-t}dt=1+e.
        \end{array}
    \right.
    \end{eqnarray*}
    Now, let $y_n(t)=2\sqrt{\pi}\widehat{G}_n(e^{-t/2})$ with $|x|=e^{-t/2}$, define a function $\widehat{G}_n(x)=\widehat{G}_n(|x|)$ on $\overline{B_1(0)}$, which is nonnegative and radially symmetric. Moreover, we have
    \begin{align*}
    \int_{B_1(0)}|\nabla \widehat{G}_n(x)|^2dx=\int^{+\infty}_0|y'_n(t)|^2=1.
    \end{align*}
    Therefore $\|\widehat{G}_n\|=1$. Let $\tau=\frac{2-\beta}{2}$, then $\widehat{G}_n$ defines another function nonnegative and radially symmetric $\tilde{G}_n$ as follows:
    \begin{align*}
    \widehat{G}_n(\varrho)=\tau^{1/2}\tilde{G}_n(\varrho^{1/\tau})\ \ \mbox{for}\ \ \varrho\in [0,1].
    \end{align*}
    Note that
    \begin{align*}
    \int^1_0|\widehat{G}'_n(\varrho)|^2\varrho d\varrho=\int^1_0|\tilde{G}'_n(\varrho)|^2\varrho d\varrho.
    \end{align*}
    Therefore, $\|\widehat{G}_n\|=\|\tilde{G}_n\|$. The open ball $B_d(0)$ is contained in $\Omega$, where $d$ was given in $(F_4)$. Considering
    \begin{align}\label{defgnd}
    \mathscr{G}_{n,d}(x):=\big(G_{n,d}(x),0,\ldots,0\big),\ \ \ \mbox{where}\ \ \ G_{n,d}(x):=\tilde{G}_n\left(\frac{x}{d}\right),
    \end{align}
    then $\mathscr{G}_{n,d}(x)$ belongs to $H^1_0(\Omega,\mathbb{R}^k)$ with $\|\mathscr{G}_{n,d}\|=1$, and the support of $\mathscr{G}_{n,d}$ contained in $B_d(0)$.

    \begin{remark}\label{rem5.2} \rm
    If condition $(F_4)$ holds, we define $\mathscr{G}'_{n,d}(x)$ that $i$-th component is set to $G_{n,d}(x)$, and the remaining components are set to $0$, i.e., $\mathscr{G}'_{n,d}(x)=(0,\ldots,0,G_{n,d}(x),0,\ldots,0)$, then given $\delta>0$ there exists $s_\delta>0$ such that
    \begin{align*}
    \begin{split}
    \mathscr{G}'_{n,d}\cdot\nabla F(x,\mathscr{G}'_{n,d})
    =   G_{n,d} f_i(x,\mathscr{G}'_{n,d})
    \geq   (\eta_0 -\delta)\exp\left(\alpha_0|\mathscr{G}'_{n,d}|^2\right)
    =   (\eta_0 -\delta)\exp\left(\alpha_0|G_{n,d}|^2\right),
    \end{split}
    \end{align*}
    $\forall x\in \Omega,\ \ |\mathscr{G}'_{n,d}|=|G_{n,d}|\geq s_\delta$. This is the same as type (\ref{5.7}) below,
    therefore without lose of generality, we can assume that $i=1$ in $(F_4)$.
    \end{remark}

    \begin{lemma}\label{leest}
    For any $0<\epsilon<1$, we have that for $x\in B_\frac{d}{\varpi(n)}(0)$ with $\varpi(n)=\exp\left\{\frac{n^{(1+\epsilon)/2}}{2}\right\}$,
     \begin{align}\label{psbc}
     |\mathscr{G}_{n,d}(x)|\geq\frac{1}{2\sqrt{\pi}} n^{\frac{\epsilon}{2}}\left(1-\frac{2\log n}{n}\right)^{\frac{1}{2}},
    \end{align}
    where $\mathscr{G}_{n,d}$ is given in (\ref{defgnd}).
    \end{lemma}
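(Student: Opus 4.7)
The plan is to unwind the chain of definitions $\mathscr{G}_{n,d}\mapsto G_{n,d}\mapsto \tilde G_n\mapsto \widehat G_n\mapsto y_n$ so that $|\mathscr{G}_{n,d}(x)|=G_{n,d}(x)$ is expressed purely in terms of the profile $y_n(t)$ evaluated at a parameter $t$ depending on $r=|x|$. Writing $G_{n,d}(x)=\tilde G_n(r/d)=\tau^{-1/2}\widehat G_n\!\bigl((r/d)^{\tau}\bigr)$ and combining with $y_n(t)=2\sqrt{\pi}\,\widehat G_n(e^{-t/2})$ evaluated at $t=-2\tau\log(r/d)$, this puts $|\mathscr{G}_{n,d}(x)|$ in the form of an explicit constant multiple of $y_n\bigl(-2\tau\log(r/d)\bigr)$.

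Next I translate the hypothesis $|x|<d/\varpi(n)$ into a lower bound on $t$: it is equivalent to $-\log(r/d)>n^{(1+\epsilon)/2}/2$, and hence to $t>\tau n^{(1+\epsilon)/2}$. Since $\epsilon<1$ and $\tau=(2-\beta)/2\le 1$, one has $\tau n^{(1+\epsilon)/2}\le n$ for all $n$ sufficiently large, so this bound places us (or below, via monotonicity) in the linear branch of $y_n$. A direct substitution of $t=\tau n^{(1+\epsilon)/2}$ into the explicit affine formula for $y_n$ on $[0,n]$, together with the fact that $y_n$ is non-decreasing, produces the claimed lower bound $\frac{1}{2\sqrt{\pi}}\,n^{\epsilon/2}(1-\delta_n)^{1/2}$, once the successive normalizing constants $\tau^{-1/2}$, $(2\sqrt{\pi})^{-1}$, and the prefactor from the definition of $y_n$ have been collected.

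The one subtlety is that, a priori, $t=-2\tau\log(r/d)$ may exceed $n$ when $|x|$ is extremely close to $0$, taking us outside the linear branch. This is not a genuine obstacle: by monotonicity of $y_n$ we have $y_n(t)\ge y_n(\tau n^{(1+\epsilon)/2})$ when the bound on $t$ still lies in $[0,n]$, and in the other regime $t>n$ we even have $y_n(t)\ge y_n(n)$, which is of order $\sqrt{n}$ and therefore trivially dominates $n^{\epsilon/2}(1-\delta_n)^{1/2}$ for any $\epsilon<1$. Thus the main work is merely bookkeeping of the normalizing constants through the three scalings; once done, the inequality \eqref{psbc} is a direct substitution, with no functional-analytic obstacle and nothing depending on the specific shape of $y_n$ past $t=n$.
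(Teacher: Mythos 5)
Your proof follows the same line as the paper's: unwind the chain of definitions to write $|\mathscr{G}_{n,d}(x)|$ as a constant multiple of $y_n(t)$, translate the radial constraint $|x|<d/\varpi(n)$ into a lower bound on $t$, and then read the estimate off the affine branch of $y_n$ on $[0,n]$ together with the monotonicity of $y_n$. Your tracking of the $\tau$-dilation (so that $t>\tau n^{(1+\epsilon)/2}$, not $t>n^{(1+\epsilon)/2}$) is in fact more careful than what the paper writes, which silently drops the factor $\tau$; but this leaves a residual $\sqrt{\tau}\le 1$ after the amplitude factors $\tau^{-1/2}$ and $\tau$ cancel, so the constant one actually obtains is $\sqrt{\tau}/(2\sqrt{\pi})$ rather than $1/(2\sqrt{\pi})$ --- a harmless discrepancy, since only the $n^{\epsilon/2}$ growth rate is used in Lemma \ref{nl}, but you should not assert, as your last step does, that the normalizing constants ``collect'' to exactly the stated prefactor without verifying it.
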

    \begin{proof}
    For $x\in B_\frac{d}{\varpi(n)}(0)$,
    \begin{align*}
    \left|\mathscr{G}_{n,d}(x)\right|=\left|G_{n,d}(x)\right|=\left|\tilde{G}_n\left(\frac{x}{d}\right)\right|=\left|\tilde{G}_n(y)\right|,
    \end{align*}
    where $y=\frac{x}{d}\in B_\frac{1}{\varpi(n)}(0)$. Moreover,
    \begin{align*}
    \left|\tilde{G}_n(y)\right|=\left|\tilde{G}_n(|y|)\right|=\frac{1}{2\sqrt{\pi}}y_n(-2\log(|y|))=\frac{1}{2\sqrt{\pi}}y_n(t),
    \end{align*}
    where $t=-2\log(|y|)\in (n^{\frac{1+\epsilon}{2}},+\infty)$. Noticing that, $y_n(t)\geq\big[n(1-\delta_n)\big]^{\frac{1}{2}}=n^{\frac{1}{2}}\left(1-\frac{2\log n}{n}\right)^{\frac{1}{2}}$ if $t\geq n$. Moreover, in $(n^{\frac{1+\epsilon}{2}},n)$,
    \begin{align*}
    \begin{split}
    y_n(t)\geq \frac {n^{\frac{1+\epsilon}{2}}}{n^{\frac{1}{2}}}(1-\delta_n)^{\frac{1}{2}}=n^{\frac{\epsilon}{2}}\left(1-\frac{2\log n}{n}\right)^{\frac{1}{2}}.
    \end{split}
    \end{align*}
    The proof is complete.
   \end{proof}

    \begin{lemma}\label{nl}
    If conditions $(M_1),\ (M_3)$ and $(F_3),\ (F_4)$ hold, then
    \begin{align*}
    \max_{t\geq 0}\left[\frac{1}{2}M(t^2)-\int_{\Omega}\frac{F(x,t\mathscr{G}_{n,d})}{|x|^\beta}\right]
    <\frac{1}{2}M\left(\frac{2\pi(2-\beta)}{\alpha_0}\right).
    \end{align*}
    \end{lemma}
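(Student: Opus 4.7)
The plan is by contradiction. I would assume that for infinitely many $n$ the function
\begin{equation*}
\phi_n(t) := \frac{1}{2}M(t^2) - \int_\Omega \frac{F(x,t\mathscr{G}_{n,d})}{|x|^\beta}\,dx
\end{equation*}
satisfies $\max_{t\geq 0}\phi_n(t)\geq \tfrac{1}{2}M(L_0)$, where $L_0 := 2\pi(2-\beta)/\alpha_0$. Since $\phi_n(0)=0$ by $(F_0)$, integrating $(F_3)$ along rays yields $F(x,sU)\geq s^\mu F(x,U)$ for $s\geq 1$ and $U\neq\mathbf{0}$, and together with the polynomial bound $M(s)\leq C(1+s^\theta)$ from Remark \ref{remcmm} and the condition $\mu>2\theta$, this forces $\phi_n(t)\to -\infty$; hence the maximum is attained at some $t_n>0$ with $\phi_n'(t_n)=0$, i.e.
\begin{equation*}
m(t_n^2)\,t_n^2 = \int_\Omega \frac{t_n\mathscr{G}_{n,d}\cdot\nabla F(x,t_n\mathscr{G}_{n,d})}{|x|^\beta}\,dx.
\end{equation*}
Since $F\geq 0$ by $(F_3)$, the contradiction hypothesis forces $\tfrac{1}{2}M(t_n^2)\geq \tfrac{1}{2}M(L_0)$, so Remark \ref{rem1} yields $t_n^2\geq L_0$.

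Next, using the successive substitutions $r=d\varrho$, $\varrho=\sigma^{1/\tau}$ with $\tau=(2-\beta)/2$, and $\sigma=e^{-t/2}$ together with the identities $\tilde G_n(\sigma^{1/\tau})=\tau^{-1/2}\widehat G_n(\sigma)$ and $\widehat G_n(e^{-t/2})=y_n(t)/(2\sqrt{\pi})$, a direct computation produces
\begin{equation*}
\int_{B_d(0)}\frac{e^{\alpha_0 t_n^2 G_{n,d}^2}}{|x|^\beta}\,dx = \frac{\pi d^{2-\beta}}{\tau}\int_0^\infty \exp\!\left(\frac{\alpha_0 t_n^2}{2\pi(2-\beta)}\,y_n(t)^2 - t\right)dt,
\end{equation*}
which is the bridge to the stated asymptotic $\int_0^\infty e^{y_n^2-t}\,dt\to 1+e$. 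Using this identity I would strengthen $t_n^2\geq L_0$ to $t_n^2\to L_0$: if $\limsup t_n^2>L_0$ along a subsequence, then $\lambda_n := \alpha_0 t_n^2/(2\pi(2-\beta))\geq 1+\gamma$ for some $\gamma>0$; since $y_n(n)^2=n(1-\delta_n)\sim n$, the integrand at $t$ slightly larger than $n$ is at least $\exp(n\gamma-O(\log n))$, so the one-dimensional integral grows exponentially in $n$. Feeding this into the Euler--Lagrange identity via Remark \ref{rem5.2} would force $m(t_n^2)t_n^2$ to grow exponentially, contradicting the polynomial bound $m(t_n^2)t_n^2\leq \theta M(t_n^2)\leq C(1+t_n^{2\theta})$ obtained from Remark \ref{rem3} and Remark \ref{remcmm}. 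Hence $t_n^2\to L_0$.

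For the sharp step, fix $\delta>0$ and set $A_n := \{x\in B_d(0): t_n G_{n,d}(x)\geq s_\delta\}$ with $s_\delta$ from Remark \ref{rem5.2}. Since $t_n$ is bounded and $y_n(t)=O(t/\sqrt{n})$ for each fixed $t$, one has $G_{n,d}(x)\to 0$ for every $x\neq 0$, so $A_n$ shrinks to $\{0\}$ while $G_{n,d}\leq s_\delta/\sqrt{L_0}$ on $B_d\setminus A_n$; dominated convergence yields $\int_{B_d\setminus A_n}e^{\alpha_0 t_n^2 G_{n,d}^2}|x|^{-\beta}\,dx\to 2\pi d^{2-\beta}/(2-\beta)$. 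Meanwhile $\lambda_n\geq 1$ gives $\int_0^\infty e^{\lambda_n y_n^2-t}\,dt\geq \int_0^\infty e^{y_n^2-t}\,dt\to 1+e$, so the identity above together with subtraction produces $\liminf_{n\to\infty}\int_{A_n}e^{\alpha_0 t_n^2 G_{n,d}^2}|x|^{-\beta}\,dx \geq 2\pi d^{2-\beta}e/(2-\beta)$. Substituting the lower bound of Remark \ref{rem5.2} into the Euler--Lagrange identity,
\begin{equation*}
m(t_n^2)\,t_n^2 \geq (\eta_0-\delta)\int_{A_n}\frac{e^{\alpha_0 t_n^2 G_{n,d}^2}}{|x|^\beta}\,dx,
\end{equation*}
and passing to the limit $n\to\infty$ (using $t_n^2\to L_0$ and continuity of $m$) then $\delta\downarrow 0$, I would arrive at $\eta_0\leq (2-\beta)^2 m(L_0)/(\alpha_0 d^{2-\beta}e)$, which contradicts the strict inequality in $(F_4)$. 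The main obstacle is the upper bound on $t_n$ in the middle step: extracting exponential blow-up of the one-dimensional integral from a merely slightly supercritical ratio $1+\gamma$ requires careful use of the explicit form of $y_n$ on $[n,\infty)$ played off against the polynomial control provided by $(M_3)$.
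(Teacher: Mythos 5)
Your argument is correct and follows the paper's strategy essentially step for step: contradiction hypothesis, attain the maximum at $t_n$ with the Euler--Lagrange identity, show $t_n^2\to 2\pi(2-\beta)/\alpha_0$ via the exponential blow-up from the one-dimensional representation against the polynomial control from $(M_3)$, then the finer split into $\{t_n G_{n,d}\geq s_\delta\}$ and its complement together with the limit $\int_0^\infty e^{y_n^2-t}\,dt\to 1+e$ to reach the sharp bound $\eta_0\leq (2-\beta)^2 m(L_0)/(\alpha_0 d^{2-\beta}e)$ contradicting $(F_4)$. The only (cosmetic) difference is that you fold the paper's two sub-steps---first ``$\{t_n\}$ is bounded,'' then ``$\alpha_0 t_0^2/(2\pi(2-\beta))\leq 1$''---into a single estimate showing $\limsup t_n^2\leq 2\pi(2-\beta)/\alpha_0$ directly; this is fine since the exponential lower bound $\exp((\lambda_n-1)n - O(n^{(1+\epsilon)/2}))$ dominates $C(1+t_n^{2\theta})$ whether $t_n$ stays bounded or not. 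Note also that your computed limit $\int_{B_d\setminus A_n}e^{\alpha_0 t_n^2 G_{n,d}^2}|x|^{-\beta}\,dx\to 2\pi d^{2-\beta}/(2-\beta)$ is the correct one; the paper's ``$\to\pi d^2$'' is a slip (it is only right when $\beta=0$), though the paper then uses the correct value in its display (5.12), so the final conclusion is unaffected.
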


    \begin{proof}
    Suppose by contradiction, that for all $n\in \mathbb{N}$, we have
    \begin{align*}
    \max_{t\geq 0}\left[\frac{1}{2}M(t^2)-\int_{\Omega}\frac{F(x,t\mathscr{G}_{n,d})}{|x|^\beta}\right]
    \geq \frac{1}{2}M\left(\frac{2\pi(2-\beta)}{\alpha_0}\right).
    \end{align*}
    By Lemmas \ref{lemgc1} and \ref{lemgc2}, for each $n$ there exists $t_n>0$ such that
    \begin{align*}
    \frac{1}{2}M(t^2_n)-\int_{\Omega}\frac{F(x,t_n\mathscr{G}_{n,d})}{|x|^\beta}=\max_{t\geq 0}\left[\frac{1}{2}M(t^2)-\int_{\Omega}\frac{F(x,t\mathscr{G}_{n,d})}{|x|^\beta}\right]
    \end{align*}
    From this, and using $(F_3)$, one has $M(t^2_n)\geq M\left(\frac{2\pi(2-\beta)}{\alpha_0}\right)$. By $(M_1)$, which implies that $M:[0,+\infty)\rightarrow[0,+\infty)$ is an increasing bijection and so
    \begin{align}\label{5.5}
    t^2_n\geq \frac{2\pi(2-\beta)}{\alpha_0}.
    \end{align}
    On the other hand,
    \begin{align*}
    \frac{d}{dt}\left[\frac{1}{2}M(t^2)-\int_{\Omega}\frac{F(x,t\mathscr{G}_{n,d})}{|x|^\beta}\right]\bigg|_{t=t_n}=0,
    \end{align*}
    from which we obtain
    \begin{align}\label{5.6}
    m(t^2_n)t^2_n&=\int_{\Omega}\frac{t_n\mathscr{G}_{n,d}\cdot\nabla F(x,t_n\mathscr{G}_{n,d})}{|x|^\beta}dx
    = \int_{\Omega}\frac{t_nG_{n,d}f_1(x,t_n\mathscr{G}_{n,d})}{|x|^\beta}dx.
    \end{align}
    By Remark \ref{rem5.2} and $(F_4)$, given $\delta>0$ there exists $s_\delta>0$ such that
    \begin{align}\label{5.7}
    u_1 f_1(x,u_1,0,\ldots,0)\geq (\eta_0 -\delta)e^{\alpha_0|u_1|^2},\ \ \ \forall\ x\in \Omega,\ \ |u_1|\geq s_\delta.
    \end{align}
    Lemma \ref{leest} shows that for any $0<\epsilon<1$, $t_n|\mathscr{G}_{n,d}|\geq s_\delta$ in $B_\frac{d}{\varpi(n)}(0)\subset\Omega$ for $n$ sufficiently large, where
$
    \varpi(n)=\exp\left\{\frac{n^{(1+\epsilon)/2}}{2}\right\}.
$
    Thus, by (\ref{5.6}) and (\ref{5.7}), we have
    \begin{align*}
    \begin{split}
    m(t^2_n)t^2_n\geq (\eta_0-\delta)\int_{B_\frac{d}{\varpi(n)}(0)} \frac{e^{\alpha_0|t_n G_{n,d}|^2}}{|x|^\beta}dx
    &=(\eta_0-\delta)\Big(\frac{d}{\varpi(n)}\Big)^{2-\beta}\int_{B_1(0)} \frac{e^{\beta_0|t_n \tilde{G}_n|^2}}{|x|^\beta}dx \\
    &=2\pi(\eta_0-\delta)\Big(\frac{d}{\varpi(n)}\Big)^{2-\beta}\int^1_0 e^{\alpha_0|t_n \tilde{G}_n(\sigma)|^2}\sigma^{1-\beta} d\sigma.
    \end{split}
    \end{align*}
    By performing the change of variable $\sigma=\tau^{\frac{2}{2-\beta}}$, we get
    \begin{align*}
    m(t^2_n)t^2_n\geq \frac{4\pi}{2-\beta}(\eta_0-\delta)\Big(\frac{d}{\varpi(n)}\Big)^{2-\beta}\int^{1}_0 e^{\frac{2\alpha_0|t_n \tilde{G}_n(\tau)|^2}{2-\beta}}\tau d\tau.
    \end{align*}
    Meanwhile, setting $\tau=e^{-t/2}$, we obtain
    \begin{align*}
    \begin{split}
    m(t^2_n)t^2_n\geq \frac{2\pi}{2-\beta}(\eta_0-\delta)\Big(\frac{d}{\varpi(n)}\Big)^{2-\beta}\int^{+\infty}_0 e^{\frac{\alpha_0|t_n y_n(t)|^2}{2\pi(2-\beta)}}e^{-t}dt.
    \end{split}
    \end{align*}
    Consequently,
    \begin{align}\label{5.8}
    \begin{split}
    m(t^2_n)t^2_n&\geq \frac{2\pi}{2-\beta}(\eta_0-\delta)\Big(\frac{d}{\varpi(n)}\Big)^{2-\beta}\int^{+\infty}_n e^{\frac{\alpha_0t_n^2(n-2\log n)}{2\pi(2-\beta)}}e^{-t}dt \\
    &=\frac{2\pi}{2-\beta}(\eta_0-\delta)d^{2-\beta}\exp\left\{\frac{\alpha_0t_n^2(n-2\log n)}{2\pi(2-\beta)}-(2-\beta)\log \varpi(n)-n\right\} \\
    &=\frac{2\pi}{2-\beta}(\eta_0-\delta)d^{2-\beta}\exp\left\{\frac{\alpha_0t_n^2(n-2\log n)}{2\pi(2-\beta)}-\frac{(2-\beta)n^{\frac{1+\epsilon}{2}}}{2}-n\right\} \\
    &=\frac{2\pi}{2-\beta}(\eta_0-\delta)d^{2-\beta}\exp\left\{\left[\frac{\alpha_0t_n^2}{2\pi(2-\beta)}-1\right]n-\frac{(2-\beta)n^{\frac{1+\epsilon}{2}}}{2}
    -\frac{\alpha_0t_n^2}{\pi(2-\beta)}\log n\right\}.
    \end{split}
    \end{align}
    From this
    \begin{align}\label{5.9}
    \begin{split}
    1\geq \frac{2\pi}{2-\beta}(\eta_0-\delta)d^{2-\beta}\exp\left\{t^2_n n\left[\frac{\alpha_0\big(1-\frac{2\log n}{ n}\big)}{2\pi(2-\beta)}-\frac{(2-\beta)n^{\frac{1+\epsilon}{2}}+2n}{2t^2_n n}-\frac{\log{\big (m(t^2_n)t^2_n\big)}}{t^2_n n} \right]\right\},
    \end{split}
    \end{align}
    thus, $\{t_n\}$ is bounded. Otherwise, noting that, from (\ref{1.4}), making use of the property of $M$ and $m$, we would have that
    \begin{align*}
    \begin{split}
    t^2_n n\left[\frac{\alpha_0\big(1-\frac{2\log n}{n}\big)}{2\pi(2-\beta)}-\frac{(2-\beta)n^{\frac{1+\epsilon}{2}}+2n}{2t^2_n n}-\frac{\log{\big (m(t^2_n)t^2_n\big)}}{t^2_n n} \right]\rightarrow+\infty,
    \end{split}
    \end{align*}
    which is a contradiction with (\ref{5.9}). Therefore $\{t_n\}$ has a subsequence convergent, from (\ref{5.5}), for some $t_0^2\geq \frac{2\pi(2-\beta)}{\alpha_0}$,\ $t_n\rightarrow t_0$. Moreover, using (\ref{5.8}), we must have $\frac{\alpha_0 t^2_0}{2\pi(2-\beta)}-1\leq 0$ and therefore,
    \begin{align}\label{5.10}
    t^2_n\rightarrow\frac{2\pi(2-\beta)}{\alpha_0}.
    \end{align}
    At this point, following arguments as in \cite{dmr1}, we are going to estimate (\ref{5.6}) more exactly. For this, in view of (\ref{5.7}), for $0<\delta<\eta_0$ and $n\in \mathbb{N}$ we set
    \begin{equation*}
    D_{n,\delta}:=\{x\in B_d(0):t_n G_{n,d}\geq s_\delta\}\ \ \mbox{and}\ \ E_{n,\delta}:=B_d(0)\backslash D_{n,\delta}.
    \end{equation*}
    Thus, by splitting the integral (5.6) on $D_{n,\delta}\ \mbox{and}\ E_{n,\delta}$, and using (\ref{5.7}), it follows that
    \begin{align}\label{5.11}
    \begin{split}
    M(t^2_n)t^2_n\geq &(\eta_0-\delta)\int_{B_d(0)}\frac{e^{\alpha_0 t^2_n G^2_{n,d}}}{|x|^\beta}dx-(\eta_0-\delta)\int_{E_{n,\delta}}\frac{e^{\alpha_0 t^2_n G^2_{n,d}}}{|x|^\beta}dx \\
    &+\int_{E_{n,\delta}}\frac{t_n G_{n,d}f_1(x,t_n \mathscr{G}_{n,d})}{|x|^\beta}dx.
    \end{split}
    \end{align}
    Since $G_{n,d}(x)\rightarrow 0$ for almost everywhere $x\in  B_d(0)$, we have that the characteristic functions $\chi_{E_{n,\delta}}$ satisfy
    \begin{equation*}
    \chi_{E_{n,\delta}}\rightarrow 1\ \ \mbox{a.e. in}\ \ B_d(0)\ \ \mbox{as}\ \ n\rightarrow+\infty.
    \end{equation*}
    Moreover, $t_n G_{n,d}<s_\delta$ in $E_{n,\delta}$. Thus, invoking the Lebesgue dominated convergence theorem, we obtain
    \begin{equation*}
    \int_{E_{n,\delta}}\frac{e^{\alpha_0 t^2_n G^2_{n,d}}}{|x|^\beta}dx\rightarrow\pi d^2 \ \ \mbox{and}\ \ \int_{E_{n,\delta}}\frac{t_n G_{n,d}f_1(x,t_n \mathscr{G}_{n,d})}{|x|^\beta}dx\rightarrow 0,\ \ \mbox{as}\ \ n\rightarrow+\infty.
    \end{equation*}
    Noting that
    \begin{equation*}
    \begin{split}
    \int_{B_d(0)}\frac{e^{\alpha_0 t^2_n G^2_{n,d}}}{|x|^\beta}dx=d^{2-\beta}\int_{B_1(0)}\frac{e^{\alpha_0 t^2_n \tilde{G}^2_n}}{|x|^\beta}dx=2\pi d^{2-\beta}\int^1_0{e^{\alpha_0 t^2_n \tilde{G}^2_n(\sigma)}}\sigma^{2-\beta}d\sigma,
    \end{split}
    \end{equation*}
    By performing the change of variable $\sigma=\tau^{\frac{2}{2-\beta}}$, we get
    \begin{align*}
    \int_{B_d(0)}\frac{e^{\alpha_0 t^2_n G^2_{n,d}}}{|x|^\beta}dx=\frac{4\pi}{2-\beta}d^{2-\beta}\int^{1}_0 e^{\frac{2\alpha_0|t_n \tilde{G}_n(\tau)|^2}{2-\beta}}\tau d\tau.
    \end{align*}
    Meanwhile, setting $\tau=e^{-t/2}$ and using (\ref{5.5}), we obtain
    \begin{align*}
    \begin{split}
    \int_{B_d(0)}\frac{e^{\alpha_0 t^2_n G^2_{n,d}}}{|x|^\beta}dx=&\frac{2\pi}{2-\beta}d^{2-\beta}\int^{+\infty}_0 e^{\frac{\alpha_0|t_n y_n(t)|^2}{2\pi(2-\beta)}}e^{-t}dt \\
    \geq& \frac{2\pi}{2-\beta}d^{2-\beta}\int^{+\infty}_0 e^{y^2_n(t)-t}dt.
    \end{split}
    \end{align*}
    Passing to limit in (\ref{5.11}), we obtain that
    \begin{align}\label{5.12}
    m\left(\frac{2\pi(2-\beta)}{\alpha_0}\right)\frac{2\pi(2-\beta)}{\alpha_0}\geq (\eta_0-\delta)\left[\frac{2\pi}{2-\beta} d^{2-\beta}(1+e)-\frac{2\pi}{2-\beta} d^{2-\beta}\right]=(\eta_0-\delta)\frac{2\pi e}{2-\beta} d^{2-\beta},
    \end{align}
    and doing $\delta\rightarrow 0^+$, we get $\eta_0\leq \frac { (2-\beta)^2m\big(\frac {2\pi(2-\beta)}{\alpha_0}\big)}{\alpha_0 d^{2-\beta} e }$, which contradicts $(F_4)$. Thus, this lemma is proved.
    \end{proof}

    Now, we establish an estimate for the minimax level.
    \begin{lemma}\label{lem5.3}
    If conditions $(M_1),\ (M_3)$ and  $(F_3)-(F_4)$ hold, then for small $\varepsilon$, it holds that
    \begin{align*}
    c_{M,\varepsilon}<\frac{1}{2}M\left(\frac{2\pi(2-\beta)}{\alpha_0}\right),
    \end{align*}
    where $c_{M,\varepsilon}$ is given as in (\ref{defmpl}).
    \end{lemma}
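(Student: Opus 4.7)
The plan is to estimate $c_{M,\varepsilon}$ from above by evaluating $I_\varepsilon$ along the explicit ray $t\mapsto t\,\mathscr{G}_{n,d}$ for a fixed large $n$, where $\mathscr{G}_{n,d}$ is the Moser-type test vector-field from \eqref{defgnd} with $\|\mathscr{G}_{n,d}\|=1$. Writing
$$\Theta_n(t):=\tfrac{1}{2}M(t^2)-\int_{\Omega}\frac{F(x,t\mathscr{G}_{n,d})}{|x|^\beta}\,dx,\qquad c_n:=\max_{t\ge 0}\Theta_n(t),$$
Lemma \ref{nl} already guarantees the strict inequality $c_n<\tfrac{1}{2}M\!\bigl(\tfrac{2\pi(2-\beta)}{\alpha_0}\bigr)$. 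Since
$$I_\varepsilon(t\,\mathscr{G}_{n,d})=\Theta_n(t)-\varepsilon\,t\!\int_{\Omega}\mathscr{G}_{n,d}\cdot H\,dx,$$
the only remaining tasks are to select a common endpoint $T_0$ independent of small $\varepsilon$ so that the path $\gamma(s):=s\,T_0\,\mathscr{G}_{n,d}$ lies in $\Upsilon$, and then to absorb the $\varepsilon$-perturbation into the above gap.

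For the endpoint, $(F_3)$ produces (as in the proof of Lemma \ref{lemgc2}) a lower bound $F(x,U)\ge C_1|U|^\mu$ for $|U|\ge 1$ on a compact subset where $\mathscr{G}_{n,d}$ stays bounded below, while Remark \ref{remcmm} gives the polynomial upper bound $M(t^2)\le C_2 t^{2\theta}+C_3$; together with the duality estimate $\bigl|\!\int_\Omega\mathscr{G}_{n,d}\cdot H\,dx\bigr|\le\|H\|_*$ this yields
$$I_\varepsilon(t\,\mathscr{G}_{n,d})\le \tfrac{C_2}{2}t^{2\theta}+\tfrac{C_3}{2}-C_4\,t^\mu+\varepsilon\,t\,\|H\|_*\qquad(t\ \text{large}).$$
Because $\mu>2\theta$, the term $-C_4 t^\mu$ dominates and one may fix $T_0>0$, independent of $\varepsilon\in(0,1]$, so that $I_\varepsilon(T_0\,\mathscr{G}_{n,d})<0$; hence $\gamma\in\Upsilon$. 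Applying the elementary inequality $\max(f+g)\le\max f+\max g$ on $[0,T_0]$,
$$c_{M,\varepsilon}\le\max_{t\in[0,T_0]}I_\varepsilon(t\,\mathscr{G}_{n,d})\le c_n+\varepsilon\,T_0\,\|H\|_*,$$
and choosing $\varepsilon_c:=\min\!\bigl\{1,\,\bigl(\tfrac{1}{2}M(\tfrac{2\pi(2-\beta)}{\alpha_0})-c_n\bigr)/(T_0\|H\|_*)\bigr\}$ delivers the desired strict bound for every $\varepsilon\in(0,\varepsilon_c)$.

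The only genuine obstruction is the strict separation $c_n<\tfrac{1}{2}M(\tfrac{2\pi(2-\beta)}{\alpha_0})$, which has already been established in Lemma \ref{nl} through the sharp asymptotic hypothesis $(F_4)$ and the delicate Moser-type sequence $\{y_n\}$ from \cite{ddr}. The present statement merely adds a correction linear in $\varepsilon$ along a fixed direction, which is harmless once $\varepsilon$ is made small enough.
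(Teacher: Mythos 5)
Your argument is correct and follows the same route as the paper: bound $c_{M,\varepsilon}$ from above along the ray $t\mapsto t\mathscr{G}_{n,d}$, invoke Lemma~\ref{nl} for the strict gap $c_n<\frac12 M\!\left(\frac{2\pi(2-\beta)}{\alpha_0}\right)$, and absorb the $O(\varepsilon)$ perturbation. You simply spell out more carefully what the paper compresses into ``taking $\varepsilon$ sufficiently small,'' namely the $\varepsilon$-uniform choice of the endpoint $T_0$ and the explicit linear bound $|\varepsilon t\int_\Omega \mathscr{G}_{n,d}\cdot H\,dx|\le\varepsilon T_0\|H\|_*$.
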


    \begin{proof}
    Since $\|\mathscr{G}_{n,d}\|=1$, as in the proof of Lemma \ref{lemgc2}, we have that $I(t\mathscr{G}_{n,d})\rightarrow -\infty$ as $t\rightarrow +\infty$. Consequently, $c_{M,\varepsilon}\leq\max_{t\geq 0}I_\varepsilon(t\mathscr{G}_{n,d}),\ \forall \ n\in \mathbb{N}$. Thus, from Lemma \ref{nl}, taking $\varepsilon$ sufficiently small, we can get what we desired.
    \end{proof}

    \begin{lemma}\label{lemms}
    If $f_i$ has critical growth at $\infty$,\  $(M_1)-(M_3)$ and $(F_1)-(F_4)$ satisfy, then for small $\varepsilon$, problem (\ref{P}) has one nontrivial mountain-pass type solution $U_{M,\varepsilon}$ at level $c_{M,\varepsilon}$, where $c_{M,\varepsilon}$ is given as in (\ref{defmpl}).
    \end{lemma}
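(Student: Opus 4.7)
My plan is to extract a Palais-Smale sequence at the mountain-pass level $c_{M,\varepsilon}$, show it converges (up to subsequence) strongly to a nontrivial weak solution, and use Lemma \ref{ms1} together with the level estimate from Lemma \ref{lem5.3} to bring the sequence below the Trudinger-Moser threshold. By the mountain-pass geometry from Lemmas \ref{lemgc1} and \ref{lemgc2}, the standard deformation argument yields $\{U_n\}\subset H^1_0(\Omega,\mathbb{R}^k)$ with $I_\varepsilon(U_n)\to c_{M,\varepsilon}$ and $I'_\varepsilon(U_n)\to 0$ in $\big(H^1_0(\Omega,\mathbb{R}^k)\big)^*$. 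Lemma \ref{lem4.1} bounds $\|U_n\|$, so up to a subsequence $U_n\rightharpoonup U_{M,\varepsilon}$ weakly in $H^1_0(\Omega,\mathbb{R}^k)$, strongly in every $L^p(\Omega,\mathbb{R}^k)$, and a.e.\ in $\Omega$, with $\|U_n\|^2\to\tau^2$ for some $\tau^2\geq\|U_{M,\varepsilon}\|^2$. Using Lemmas \ref{lem4.4} and \ref{lemdr} together with the continuity of $m$, I pass to the limit in $\langle I'_\varepsilon(U_n),\phi\rangle_*$ for each $\phi\in H^1_0(\Omega,\mathbb{R}^k)$ to find that $U_{M,\varepsilon}$ satisfies the limiting equation with the coefficient $m(\tau^2)$ in place of $m(\|U_{M,\varepsilon}\|^2)$.

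To verify $U_{M,\varepsilon}\not\equiv \mathbf{0}$ I argue by contradiction. If $U_{M,\varepsilon}\equiv\mathbf{0}$, Lemma \ref{lem4.4} gives $\int_\Omega F(x,U_n)/|x|^\beta\,dx\to 0$ and $\int_\Omega U_n\cdot H\,dx\to 0$, hence $M(\|U_n\|^2)\to 2c_{M,\varepsilon}$. Lemma \ref{lem5.3} together with the strict monotonicity of $M$ (Remark \ref{rem1}) then forces $\lim\|U_n\|^2<2\pi(2-\beta)/\alpha_0$, so Lemma \ref{ms1} yields $U_n\to\mathbf{0}$ strongly; but then $c_{M,\varepsilon}=I_\varepsilon(\mathbf{0})=0$, contradicting $c_{M,\varepsilon}>0$.

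The hard part will be reconciling $\tau^2$ with $\|U_{M,\varepsilon}\|^2$, the usual difficulty for Kirchhoff problems with critical exponential growth. Testing the limit equation with $\phi=U_{M,\varepsilon}$ and comparing with the limit of $\langle I'_\varepsilon(U_n),U_n\rangle_*=o(1)$, the hypothesis $(F_3)$ and Fatou's lemma immediately give $\tau^2\geq\|U_{M,\varepsilon}\|^2$. For the reverse inequality I would substitute $m(\tau^2)\|U_{M,\varepsilon}\|^2=\int_\Omega U_{M,\varepsilon}\cdot\nabla F(x,U_{M,\varepsilon})/|x|^\beta\,dx+\varepsilon\int_\Omega U_{M,\varepsilon}\cdot H\,dx$, together with $(F_3)$ and the integrated form (\ref{1.4}) of $(M_3)$, into the energy identity $\tfrac{1}{2}M(\tau^2)-\int_\Omega F(x,U_{M,\varepsilon})/|x|^\beta\,dx-\varepsilon\int_\Omega U_{M,\varepsilon}\cdot H\,dx=c_{M,\varepsilon}$, and rule out $\tau^2\geq 2\pi(2-\beta)/\alpha_0$ by Lemma \ref{lem5.3}. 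Once $\tau^2<2\pi(2-\beta)/\alpha_0$ is secured, Lemma \ref{ms1} delivers strong convergence $U_n\to U_{M,\varepsilon}$, so $\tau^2=\|U_{M,\varepsilon}\|^2$ and $U_{M,\varepsilon}$ is the desired nontrivial mountain-pass weak solution at level $c_{M,\varepsilon}$.
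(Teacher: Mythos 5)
Your overall scaffolding (mountain-pass geometry, bounded PS sequence, Claim that the weak limit is nonzero via Lemma \ref{ms1}, passing to the limit in the equation, and then forcing strong convergence to close the circle) matches the paper. The nonvanishing argument is exactly the paper's Claim~1. However, the crucial step---the ``reverse inequality''---contains a genuine gap, and the paper in fact proves something \emph{weaker} than what you assert, precisely because what you assert is not obtainable from the level estimate once $U\neq\mathbf{0}$ is known.

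You claim that $\tau^2:=\lim\|U_n\|^2<\frac{2\pi(2-\beta)}{\alpha_0}$ can be ruled out using the energy identity, $(F_3)$, \eqref{1.4}, and Lemma~\ref{lem5.3}. Run the computation: from the energy identity $c_{M,\varepsilon}=\frac12 M(\tau^2)-\int_\Omega\frac{F(x,U)}{|x|^\beta}dx-\varepsilon\int_\Omega U\cdot H\,dx$, combined with $(F_3)$, \eqref{1.4}, and $\|U\|^2\le\tau^2$, the best you get is
\[
c_{M,\varepsilon}\;\ge\;\Bigl(\tfrac12-\tfrac{\theta}{\mu}\Bigr)M(\tau^2)-\tfrac{\mu-1}{\mu}\,\varepsilon\,\|U\|\,\|H\|_*,
\]
and since $\tfrac12-\tfrac{\theta}{\mu}<\tfrac12$, the assumption $\tau^2\ge\frac{2\pi(2-\beta)}{\alpha_0}$ yields only a lower bound strictly below $\frac12 M\bigl(\frac{2\pi(2-\beta)}{\alpha_0}\bigr)$, hence no contradiction with Lemma~\ref{lem5.3}. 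The obstruction is that once $U\neq\mathbf{0}$, the positive term $\int F(x,U)/|x|^\beta\,dx$ eats up the gap: the energy identity alone gives $c_{M,\varepsilon}<\frac12 M(\tau^2)$, which is consistent with $\tau^2\ge\frac{2\pi(2-\beta)}{\alpha_0}$.

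The paper instead proves the strictly weaker statement $A-\|U\|^2<\frac{2\pi(2-\beta)}{\alpha_0}$ (Claim~3), where $A=\tau^2$. The proof uses $I_\varepsilon(U_n)-\frac{1}{2\theta}\langle I'_\varepsilon(U_n),U_n\rangle_*$, the monotonicity of $\theta M(t)-m(t)t$ from $(M_3)$, and then---this is the decisive ingredient you omitted---$(M_2)$, i.e.\ that $m$ and hence $m(t)t$ are nondecreasing, so that subtracting the equation for $U$ (Claim~2) from the limit of $\langle I'_\varepsilon(U_n),U_n\rangle_*$ gives $m(A)(A-\|U\|^2)=\lim_n\int\frac{U_n\cdot\nabla F(x,U_n)-U\cdot\nabla F(x,U)}{|x|^\beta}dx\ge m\bigl(\tfrac{2\pi(2-\beta)}{\alpha_0}\bigr)\tfrac{2\pi(2-\beta)}{\alpha_0}$ whenever $A-\|U\|^2\ge\frac{2\pi(2-\beta)}{\alpha_0}$. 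This weaker claim then feeds into Lemma~\ref{lemtm3}, the concentration-compactness version of the singular Trudinger-Moser inequality (not Lemma~\ref{ms1}), because the extra room $\|U\|^2>0$ in the denominator $1-\|Z_0\|^2$ is exactly what makes the exponential integral uniformly bounded. In short: Lemma~\ref{ms1} requires $A<\frac{2\pi(2-\beta)}{\alpha_0}$, which you cannot prove; Lemma~\ref{lemtm3} requires only $A-\|U\|^2<\frac{2\pi(2-\beta)}{\alpha_0}$, which is what the $(M_2)$-argument delivers. You would need to adopt both the weakened Claim~3 and the switch to Lemma~\ref{lemtm3} to make your proof correct.
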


    \begin{proof}
    From (\ref{5.1}) and Lemma \ref{lem4.1}, there exists a bounded Palais-Smale consequence $\{U_n\}$ for $I_\varepsilon$ at level $c_{M,\varepsilon}$. And, up to a subsequence, for some $U\in H^1_0(\Omega,\mathbb{R}^k)$, one has
    \begin{align}\label{5.14}
    U_n\rightharpoonup U\ \  \mbox{in}\ \ H^1_0(\Omega,\mathbb{R}^k); \qquad
    U_n \rightarrow U\ \   \mbox{in}\ \ L^s(\Omega,\mathbb{R}^k)  \ \mbox{for all}\ \ s\geq 1.
    \end{align}
    By Lemma \ref{lem4.1}, $\|U_n\|\leq C$ for some $C>0$, and $\|U\|\leq \lim\inf_{n\rightarrow\infty} \|U_n\|\leq C$. And by $(F_3)$, we have that for small $\varepsilon$, it holds
    \begin{align}\label{ms0}
    \begin{split}
    &\frac {1}{2\theta}\int_{\Omega}\bigg[\frac{U\cdot\nabla F(x,U)-2\theta F(x,U)}{|x|^\beta}-\varepsilon(2\theta-1) U\cdot H\bigg]dx \\
    &\geq\frac {1}{2\theta}\int_{\Omega}\frac{U\cdot\nabla F(x,U)-2\theta F(x,U)}{|x|^\beta}dx-\frac {\varepsilon (2\theta-1)\|U\| \|H\|_*}{2\theta}
    \geq 0.
    \end{split}
    \end{align}
    Next, we will make some claims as follows.

\noindent{\bfseries Claim 1.} \ \ \ $U\neq \mathbf{0}$.
    \begin{proof}
    Suppose by contradiction that $U\equiv \mathbf{0}$. Then Lemma \ref{lem4.4} show that $\int_{\Omega}U_n\cdot H dx\rightarrow 0$ and $\int_{\Omega}\frac{F(x,U_n)}{|x|^\beta}dx\rightarrow 0$, thus
    \begin{align*}
    \frac{1}{2}M(\|U_n\|^2)\rightarrow c_{M,\varepsilon}<\frac{1}{2}M\left(\frac{2\pi(2-\beta)}{\alpha_0}\right),
    \end{align*}
    then condition $(M_1)$ implies
    \begin{align*}
    \liminf_{n\rightarrow\infty}\|U_n\|^2<\frac{2\pi(2-\beta)}{\alpha_0}.
    \end{align*}
    By Lemma \ref{ms1}, we have $\|U_n\|^2\rightarrow 0$ and therefore $I_\varepsilon(U_n)\rightarrow 0$, what is absurd for $c_{M,\varepsilon}$ and we must have $U\neq \mathbf{0}$.
    \end{proof}

\noindent{\bfseries Claim 2.} \ \ \ Let $A:=\lim_{n\rightarrow \infty}\|U_n\|^2$, then $U$ is a weak solution of
    \begin{eqnarray*}
    \left\{ \arraycolsep=1.5pt
       \begin{array}{ll}
        -m(A)\Delta U=\frac{\nabla F(x,U)}{|x|^\beta}+\varepsilon H,\ \ &\mbox{in}\ \ \Omega,\\[2mm]
        U=0,\ \ &\mbox{on}\ \ \partial\Omega.
        \end{array}
    \right.
    \end{eqnarray*}

    \begin{proof}
    We define $C^\infty_0(\Omega,\mathbb{R}^k):=C^\infty_0(\Omega)\times \cdots\times C^\infty_0(\Omega)$. By $I_\varepsilon'(U_n)\rightarrow 0$ and Lemma \ref{lem4.4}, we see that
    \begin{align*}
    \begin{split}
    m(A)\int_{\Omega}\nabla U\cdot\nabla \Phi dx-\int_{\Omega}\frac{\Phi\cdot\nabla F(x,U)}{|x|^\beta}dx-\varepsilon\int_{\Omega}\Phi\cdot Hdx=0,\quad \forall\Phi\in C^\infty_0(\Omega,\mathbb{R}^k).
    \end{split}
    \end{align*}
    Since  $C^\infty_0(\Omega)$ is dense in $H^1_0(\Omega)$, then $C^\infty_0(\Omega,\mathbb{R}^k)$ is also dense in $H^1_0(\Omega,\mathbb{R}^k)$, and we conclude this claim.
    \end{proof}

\noindent{\bfseries Claim 3.} \ \ \ $A:=\lim_{n\rightarrow \infty}\|U_n\|^2<\|U\|^2+\frac{2\pi(2-\beta)}{\alpha_0}$.
    \begin{proof}
    Suppose by contradiction that $A\geq \|U\|^2+\frac{2\pi(2-\beta)}{\alpha_0}\geq \frac{2\pi(2-\beta)}{\alpha_0}$. Therefore, from (\ref{5.1}) and Lemma \ref{lem4.4}, we obtain
    \begin{align*}
    \begin{split}
    c_{M,\varepsilon}=&\lim_{n\rightarrow\infty}\left[I_\varepsilon(U_n)-\frac {1}{2\theta}\langle I'_\varepsilon(U_n),U_n\rangle_*\right] \\
    =&\frac {1}{2\theta}\lim_{n\rightarrow\infty}\Big[\theta M(\|U_n\|^2)-m(\|U_n\|^2)\|U_n\|^2\Big] \\
    &+\frac {1}{2\theta}\lim_{n\rightarrow\infty}\int_{\Omega}\frac{U_n\cdot\nabla F(x,U_n)-U\cdot\nabla F(x,U)}{|x|^\beta}dx \\
    &+\frac {1}{2\theta}\int_{\Omega}\bigg[\frac{U\cdot\nabla F(x,U)-2\theta F(x,U)}{|x|^\beta}-\varepsilon(2\theta-1)U\cdot H\bigg]dx, \\
    \end{split}
    \end{align*}
    thus, by $(M_3)$ and (\ref{ms0})
    \begin{align*}
    \begin{split}
    c_{M,\varepsilon}\geq&\frac {1}{2\theta}\lim_{n\rightarrow\infty}\Big[\theta M(\|U_n\|^2)-m(\|U_n\|^2)\|U_n\|^2\Big] \\
    &+\frac {1}{2\theta}\lim_{n\rightarrow\infty}\int_{\Omega}\frac{U_n\cdot\nabla F(x,U_n)-U\cdot\nabla F(x,U)}{|x|^\beta}dx \\
    =&\frac {1}{2\theta}\Big[\theta M(A)-m(A)A\Big]+\frac {1}{2\theta}\lim_{n\rightarrow\infty}\int_{\Omega}\frac{U_n\cdot\nabla F(x,U_n)-U\cdot\nabla F(x,U)}{|x|^\beta}dx \\
    \geq&\frac {1}{2\theta}\bigg[\theta M\left(\frac{2\pi(2-\beta)}{\alpha_0}\right)-m\left(\frac{2\pi(2-\beta)}{\alpha_0}\right)\frac{2\pi(2-\beta)}{\alpha_0}\bigg] \\
    &+\frac {1}{2\theta}\lim_{n\rightarrow\infty}\int_{\Omega}\frac{U_n\cdot\nabla F(x,U_n)-U\cdot\nabla F(x,U)}{|x|^\beta}dx \\
    =&\frac {1}{2}M\left(\frac{2\pi(2-\beta)}{\alpha_0}\right) \\
    &-\frac {1}{2\theta}\bigg[m\left(\frac{2\pi(2-\beta)}{\alpha_0}\right)\frac{2\pi(2-\beta)}{\alpha_0}-\lim_{n\rightarrow\infty}\int_{\Omega}\frac{U_n\cdot\nabla F(x,U_n)-U\cdot\nabla F(x,U)}{|x|^\beta}dx\bigg]. \\
    \end{split}
    \end{align*}
    Here, we assert
    \begin{align*}
    0\geq m\left(\frac{2\pi(2-\beta)}{\alpha_0}\right)\frac{2\pi(2-\beta)}{\alpha_0}-\lim_{n\rightarrow\infty}\int_{\Omega}\frac{U_n\cdot\nabla F(x,U_n)-U\cdot\nabla F(x,U)}{|x|^\beta}dx.
    \end{align*}
    Indeed, $\langle I'(U_n),U_n\rangle_*\rightarrow 0$, (\ref{dch}) and Claim 2 indicate that
    \begin{align*}
    \begin{split}
    0=&m(A)A-\lim_{n\rightarrow\infty}\int_{\Omega}\frac{U_n\cdot\nabla F(x,U_n)}{|x|^\beta}dx-\varepsilon\int_{\Omega}U\cdot H dx, \\
    0=&m(A)\|U\|^2-\int_{\Omega}\frac{U\cdot\nabla F(x,U)}{|x|^\beta}dx-\varepsilon\int_{\Omega}U\cdot H dx.
    \end{split}
    \end{align*}
    Subtracting the second equality from the first one, by $(M_2)$ which implies $m(t)$ and $m(t)t$ are nondecreasing for $t\geq 0$, we get
    \begin{align*}
    \begin{split}
    0&=m(A)(A-\|U\|^2)-\lim_{n\rightarrow\infty}\int_{\Omega}\frac{U_n\cdot\nabla F(x,U_n)-U\cdot\nabla F(x,U)}{|x|^\beta}dx \\
    &\geq m(A-\|U\|^2)(A-\|U\|^2)-\lim_{n\rightarrow\infty}\int_{\Omega}\frac{U_n\cdot\nabla F(x,U_n)-U\cdot\nabla F(x,U)}{|x|^\beta}dx \\
    &\geq m\left(\frac{2\pi(2-\beta)}{\alpha_0}\right)\frac{2\pi(2-\beta)}{\alpha_0}-\lim_{n\rightarrow\infty}\int_{\Omega}\frac{U_n\cdot\nabla F(x,U_n)-U\cdot\nabla F(x,U)}{|x|^\beta}dx.
    \end{split}
    \end{align*}
    This shows the assertion. Noting Lemma \ref{lem5.3}, we conclude
    \begin{align*}
    \frac {1}{2}M\left(\frac{2\pi(2-\beta)}{\alpha_0}\right)\leq c_{M,\varepsilon}<\frac {1}{2}M\left(\frac{2\pi(2-\beta)}{\alpha_0}\right),
    \end{align*}
    which is absurd. Thus this claim is proved.
    \end{proof}

\noindent{\bfseries Claim 4.} \ \ \ $A:=\lim_{n\rightarrow \infty}\|U_n\|^2=\|U\|^2$.
    \begin{proof}
    By using semicontinuity of norm, we have $\|U\|^2\leq A$. We are going to show that the case $\|U\|^2< A$ can not occur. Indeed, if $\|U\|^2< A$, defining $Z_n=\frac{U_n}{\|U_n\|}$ and $Z_0=\frac{U}{A^{1/2}}$, we have $Z_n\rightharpoonup Z_0\ \mbox{in} \ H^1_0(\Omega,\mathbb{R}^k)$ and $\|Z_0\|<1$. Thus, by Lemma \ref{lemtm4}
    \begin{align}\label{5.16}
    \sup_n \int_{\Omega}\frac{e^{p|Z_n|^2}}{|x|^\beta}dx<\infty,\ \ \forall p<\frac{2\pi(2-\beta)} {1-\|Z_0\|^2}.
    \end{align}
    Since $A=\frac{A-\|U\|^2}{1-\|Z_0\|^2}$, from Claim 3 which follows that $A<\frac{2\pi(2-\beta)}{\alpha_0(1-\|Z_0\|^2)}$.
    Thus, there exists $\zeta>0$ such that $\alpha_0\|U_n\|^2<\zeta<\frac{2\pi(2-\beta)}{1-\|Z_0\|^2}$ for $n$ sufficiently large. For $q>1$ close to 1 and $\alpha>\alpha_0$ close to $\alpha_0$ we still have $q\alpha\|U_n\|^2< \zeta<\frac{2\pi(2-q\beta)}{1-\|Z_0\|^2}$ with $q\beta<2$, and provoking (\ref{5.16}), for some $C>0$ and $n$ large enough, we conclude that
    \begin{align*}
    \int_{\Omega}\frac{e^{q\alpha|U_n|^2}}{|x|^{q\beta}}dx\leq \int_{\Omega}\frac{e^{\zeta|Z_n|^2}}{|x|^{q\beta}}dx\leq C.
    \end{align*}
    Hence, using (\ref{3.1}), (\ref{5.14}) and the H\"{o}lder's inequality, we get
    \begin{align*}
    \begin{split}
    \left|\int_{\Omega} \frac{(U_n-U)\cdot\nabla F(x,U_n)}{|x|^\beta} dx\right|&\leq C_1\int_{\Omega}|U_n-U|\frac{e^{\alpha|U_n|^2}}{|x|^\beta} dx \\
    &\leq C_1\|U_n-U\|_{\frac{q}{q-1}}\left(\int_{\Omega}\frac{e^{q\alpha|U_n|^2}}{|x|^{q\beta}} dx\right)^{1/q} \\
    &\leq C_2\|U_n-U\|_{\frac{q}{q-1}}\rightarrow 0,
    \end{split}
    \end{align*}
    as $n\rightarrow\infty.$ Since $\langle I'_\varepsilon(U_n),U_n-U\rangle_*\rightarrow 0$, by (\ref{dch}), it follows that $m(\|U_n\|^2)\langle U_n,U_n-U\rangle\rightarrow 0$. On the other hand,
    \begin{align*}
    \begin{split}
    m(\|U_n\|^2)\langle U_n,U_n-U\rangle&=m(\|U_n\|^2)\|U_n\|^2-m(\|U_n\|^2)\int_{\Omega}\nabla U_n\cdot\nabla Udx \\
    &\rightarrow m(A)A-m(A)\|U\|^2,
    \end{split}
    \end{align*}
    which implies that $A=\|U\|^2$ what is absurd. Thus, this claim is proved.
    \end{proof}

\noindent{\bfseries Finalizing the proof of Lemma \ref{lemms}}: Since $H^1_0(\Omega,\mathbb{R}^k)$ is uniformly convex Banach space and (\ref{5.14}), Claim 4, by Radon's Theorem, $U_n\rightarrow U$ in $H^1_0(\Omega,\mathbb{R}^k)$. Hence, by (\ref{5.1}): $I_\varepsilon'(U_n)\rightarrow 0$ and Lemma \ref{lem4.4}, we have
    \begin{equation*}
    m(\|U\|^2)\int_{\Omega}\nabla U \cdot\nabla \Phi=\int_{\Omega}\frac{\Phi\cdot\nabla {F(x,U)}}{|x|^\beta} dx+\varepsilon\int_{\Omega}H \cdot \Phi dx,\ \ \forall \Phi\in C^\infty_0(\Omega,\mathbb{R}^k).
    \end{equation*}
    Since $C^\infty_0(\Omega,\mathbb{R}^k)$ is dense in $H^1_0(\Omega,\mathbb{R}^k)$, we conclude that $U_{M,\varepsilon}:=U$ is a Mountain-pass type solution for problem (\ref{P}) with $I_\varepsilon(U_{M,\varepsilon})=c_{M,\varepsilon}>0$ and according to Claim 1, the proof is complete.
    \end{proof}

    Finally, let us to find out a minimum type solution $V_{0,\varepsilon}$ with $I_\varepsilon(V_{0,\varepsilon})=c_{0,\varepsilon}<0$, where $c_{0,\varepsilon}$ is given as in (\ref{defc0}).

    \begin{lemma}\label{lemms2}
    For small $\varepsilon$, problem (\ref{P}) has a nontrivial minimum type solution $V_{0,\varepsilon}$ with $I_\varepsilon(V_{0,\varepsilon})=c_{0,\varepsilon}<0$.
    \end{lemma}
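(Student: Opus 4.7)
The plan is to run the classical Ekeland variational principle argument, set up already in (\ref{defc0})--(\ref{5.2}). By Lemma \ref{lemgc1} we have $I_\varepsilon \geq \varsigma > 0$ on $\partial B_{\rho_\varepsilon}$, while by Lemma \ref{lemgc3} the infimum $c_{0,\varepsilon}=\inf_{\|V\|\leq\rho_\varepsilon} I_\varepsilon(V)$ is strictly negative. Since $I_\varepsilon$ is $C^1$ and bounded below on the closed, convex, complete metric ball $\overline{B}_{\rho_\varepsilon}$, Ekeland's principle produces the sequence $\{V_n\}$ with $I_\varepsilon(V_n)\to c_{0,\varepsilon}$ and $I'_\varepsilon(V_n)\to 0$. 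The energy gap between $c_{0,\varepsilon}<0$ and the boundary value $\varsigma>0$ forces $V_n$ to lie strictly inside $B_{\rho_\varepsilon}$ for $n$ large, so no boundary Lagrange multiplier appears and we really do get a free Palais-Smale sequence.

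Since $\{V_n\}$ is bounded by $\rho_\varepsilon$, up to a subsequence $V_n\rightharpoonup V_{0,\varepsilon}$ weakly in $H^1_0(\Omega,\mathbb{R}^k)$ and strongly in every $L^s(\Omega,\mathbb{R}^k)$, $s\geq 1$. The decisive point is the compactness threshold: by Lemma \ref{lemgc1}, $\rho_\varepsilon\to 0$ as $\varepsilon\to 0$, hence for $\varepsilon$ sufficiently small
\[
\liminf_{n\to\infty}\|V_n\|^2 \leq \rho_\varepsilon^2 < \frac{2\pi(2-\beta)}{\alpha_0}.
\]
This is exactly hypothesis (\ref{msVx}) of Lemma \ref{ms1}, so the Palais-Smale sequence admits a strongly convergent subsequence $V_n\to V_{0,\varepsilon}$ in $H^1_0(\Omega,\mathbb{R}^k)$.

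Strong convergence together with Lemma \ref{lem4.4} and the $C^1$ regularity of $I_\varepsilon$ (Lemma \ref{ic1}) yields $I_\varepsilon(V_{0,\varepsilon})=c_{0,\varepsilon}$ and $I'_\varepsilon(V_{0,\varepsilon})=0$, so $V_{0,\varepsilon}$ is a weak solution of (\ref{P}). Nontriviality is immediate from $I_\varepsilon(\mathbf{0})=0$ and $I_\varepsilon(V_{0,\varepsilon})=c_{0,\varepsilon}<0$.

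The main obstacle I anticipate is the critical-exponential compactness, which in Lemma \ref{lemms} required the delicate minimax-level estimate of Lemma \ref{lem5.3}. Here, however, it is bypassed almost for free: because $\rho_\varepsilon\to 0$, the minimizing sequence is automatically kept well below the concentration threshold $\sqrt{2\pi(2-\beta)/\alpha_0}$, so Lemma \ref{ms1} applies without any further level computation. The only residual subtlety is ensuring that Ekeland's sequence does not concentrate at $\partial B_{\rho_\varepsilon}$, and this is handled by the strict gap $\varsigma > 0 > c_{0,\varepsilon}$.
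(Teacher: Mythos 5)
Your proof is correct and follows the same route as the paper's: Ekeland's principle on $\overline{B}_{\rho_\varepsilon}$, the observation that $\rho_\varepsilon^2 < 2\pi(2-\beta)/\alpha_0$ for $\varepsilon$ small (using $\rho_\varepsilon\to 0$ from Lemma \ref{lemgc1}), then Lemma \ref{ms1} for strong convergence and $c_{0,\varepsilon}<0$ for nontriviality. Your extra remark about the energy gap $\varsigma>0>c_{0,\varepsilon}$ keeping the Ekeland sequence away from $\partial B_{\rho_\varepsilon}$ is a sound point of rigor (the paper leaves it implicit), and invoking Lemma \ref{lem4.4} is harmless but unnecessary once strong $H^1_0$ convergence is in hand, since $C^1$ continuity of $I_\varepsilon$ already gives $I'_\varepsilon(V_{0,\varepsilon})=0$ directly.
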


    \begin{proof}
    Let $\rho_\varepsilon$ be as in Lemma \ref{lemgc1}. Then we can choose $\varepsilon$ sufficiently small such that
    \begin{align*}
    \rho_\varepsilon<\left(\frac{2\pi(2-\beta)}{\alpha_0}\right)^{1/2}.
    \end{align*}
    Since $\overline{B}_{\rho_\varepsilon}$ is a complete metric space with the metric given by the norm of $H^1_0(\Omega,\mathbb{R}^k)$, convex and the functional $I_\varepsilon$ is of class $C^1$ and bounded below on $\overline{B}_{\rho_\varepsilon}$, by the Ekeland variational principle, there exists a sequence $\{V_n\}$ in $\overline{B}_{\rho_\varepsilon}$ such that
    \begin{equation*}
    I_\varepsilon(V_n)\rightarrow c_{0,\varepsilon}=\inf_{\|V\|\leq \rho_\varepsilon}I_\varepsilon(V)<0\ \ \mbox{and}\ \ I'_\varepsilon(V_n)\rightarrow 0.
    \end{equation*}
    Observing that
    \begin{align*}
    \|V_n\|^2\leq \rho_\varepsilon^2<\frac{2\pi(2-\beta)}{\alpha_0},
    \end{align*}
    by Lemma \ref{ms1}, there exists a strongly convergent subsequence and therefore, for some $V_{0,\varepsilon}$, $V_n\rightarrow V_{0,\varepsilon}$ strongly in $H^1_0(\Omega,\mathbb{R}^k)$. Consequently, we have $V_{0,\varepsilon}$ is a minimum type solution of problem (\ref{P}) with $I_\varepsilon(V_{0,\varepsilon})=c_{0,\varepsilon}<0$. We claim $V_{0,\varepsilon}\neq \mathbf{0}$. Indeed, suppose by a contradiction $V_{0,\varepsilon}= \mathbf{0}$, then $0>c_{0,\varepsilon}=I_\varepsilon(V_{0,\varepsilon})=I_\varepsilon(\mathbf{0})=0$, what is absurd and this lemma is proved.
    \end{proof}

\noindent{\bfseries Proof of Theorem \ref{thm1.3}.}
    By Lemmas \ref{lemms} and \ref{lemms2}, there exists $\varepsilon_c>0$ such that for each $0<\varepsilon<\varepsilon_c$, there exist nontrivial critical points $U_{M,\varepsilon}$ for $I_\varepsilon$ at level $c_{M,\varepsilon}$ and $V_{0,\varepsilon}$ for $I_\varepsilon$ at level $c_{0,\varepsilon}$. In the end, we claim $U_{M,\varepsilon}\neq V_{0,\varepsilon}$. Suppose by contradiction that $ U_{M,\varepsilon}\equiv V_{0,\varepsilon}$, then $0>c_{0,\varepsilon}=\lim_{n\rightarrow\infty}I_\varepsilon(V_n)=I(V_{0,\varepsilon})
    =I(U_{M,\varepsilon})=\lim_{n\rightarrow\infty}I_\varepsilon(U_n)=c_{M,\varepsilon}>0$, what is absurd. Thus, the proof of Theorem \ref{thm1.3} is complete.
    \qed

\medskip

\noindent{\bfseries Acknowledgements.}
The authors have been supported by NSFC 11971392, Natural Science Foundation of Chongqing, China cstc2019jcyjjqX0022  and Fundamental Research
Funds for the Central Universities XDJK2019TY001.

\end{document}